  \let\latexbibitem\bibitem
\RenewDocumentCommand{\bibitem}{O{}m}
 {
  \lullaby_bibitem:en { \lullaby_bibitem_check:nn { #1 } { \exp_not:n {#1}} } {#2}
 }
\numberwithin{equation}{section}
\newtheorem{theorem}{Theorem}[section]
\newtheorem{lemma}[theorem]{Lemma}
\newtheorem{proposition}[theorem]{Proposition}
\theoremstyle{remark}
\newtheorem{remark}[theorem]{Remark}
\newcommand{\C}{\mathbf{C}}
\newcommand{\D}{\mathbf{D}}
\newcommand{\E}{\mathbf{E}}
\newcommand{\h}{\mathbf{H}}
\newcommand{\N}{\mathbf{N}}
\newcommand{\Z}{\mathbf{Z}}
\newcommand{\p}{\mathbf{P}}
\newcommand{\Q}{\mathbf{Q}}
\newcommand{\R}{\mathbf{R}}
\newcommand{\fh}{\mathfrak {h}}
\newcommand{\fp}{\mathfrak {p}}
\newcommand{\CL}{\mathcal {L}}
\newcommand{\CS}{\mathcal {S}}
\newcommand{\CZ}{\mathcal {Z}}
\newcommand{\cZ}{\mathcal {Z}}
\newcommand{\SLE}{{\rm SLE}}
\newcommand{\CLE}{{\rm CLE}}
\newcommand{\dist}{\mathrm{dist}}
\newcommand{\diam}{\mathrm{diam}}
\newcommand{\Area}{\mathrm{Area}}
\newcommand{\im}{\mathrm{Im}}
\newcommand{\interior}{\mathrm{int}}
\newcommand{\confrad}{{\rm CR}}
\newcommand{\one}{{\bf 1}}
\newcommand{\wt}{\widetilde}
\newcommand{\wh}{\widehat}
\newcommand{\giv}{\,|\,}
\newcommand{\sleu}{{\SLE_{\kk'}(\kk'-6)}}
\newcommand{\slekk}{{\SLE_{\kk}(\kk-6)}}
\newcommand{\slekp}{\SLE_{\kk'}}
\newcommand{\clekp}{\CLE_{\kk'}}
\newcommand{\ee}{\varepsilon}
\newcommand{\dd}{\delta}
\newcommand{\aal}{\alpha}
\newcommand{\bb}{\beta}
\renewcommand{\gg}{\gamma}
\newcommand{\GG}{\Gamma}
\newcommand{\la}{\lambda}
\newcommand{\ff}{\varphi}
\newcommand{\tht}{\theta}
\newcommand{\kk}{\kappa}
\renewcommand{\ss}{\sigma}
\newcommand{\Uu}{\Upsilon}
\def\n#1{\left\lvert#1\right\rvert}
\def\nn#1{\left\lVert#1\right\rVert}
\newcommand{\nv}{^{-1}}
\newcommand{\del}{\partial}
\newcommand{\sm}{\setminus}
\newcommand{\ov}{\overline}
\newcommand{\cci}{C_c^\infty}
\newcommand{\cciz}{C_{c,0}^\infty}
\let\originalleft\left
\let\originalright\right
\renewcommand{\left}{\mathopen{}\mathclose\bgroup\originalleft}
\renewcommand{\right}{\aftergroup\egroup\originalright}
\begin{document}

\title{The square summability of the CLE complementary component diameters}

\author{Cillian Doherty and Jason Miller}

\begin{abstract}
We show that the sum of the squares of the diameters of the complementary connected components of the $\CLE_\kk$ carpet/gasket is almost surely finite for $\kk \in (8/3, 4) \cup (4, 8)$.
This is a prerequisite for the application of a result of Ntalampekos which allows the $\CLE_\kk$ carpet/gasket to be uniformized to a round Sierpi\'nski packing, in analogy with the classical Koebe uniformization theorem for finitely connected domains.  Our result is new in the case that $\kk \in (4,8)$ and we provide a new proof for $\kk \in (8/3, 4)$.  In both cases we use the link between CLE and space-filling SLE.
The square-summability of diameters has been proved for $\kk \in (8/3, 4]$ in unpublished work by Rohde and Werness using a different method.
Our work completes the proof that this property holds for all $\kk$ for which $\CLE_\kk$ is defined. 
\end{abstract}

\date{\today}
\maketitle

\setcounter{tocdepth}{1}

\parindent 0 pt
\setlength{\parskip}{0.2cm plus1mm minus1mm}

\section{Introduction}
Conformal loop ensembles (CLE) are a one-parameter family of random countable collections of loops in a simply connected domain $D \subsetneq \C$, first introduced and studied in \cite{s2009cle} and \cite{sw2012markovian}.
They are the unique random family of loops whose law is conformally invariant and satisfies a certain restriction property (see Section~\ref{sec:cle}).
CLE can be viewed as a loop version of Schramm--Loewner evolution (SLE), a family of random curves with conformal invariance and Markov properties.
CLE and SLE are conjectured, and in some instances proved, to be the scaling limits of interfaces in a wide range of statistical physics models (see e.g.\ \cite{s2001cardy, camia_newman_full_scaling_limit, benoist_hongler_2019_scaling_limit_ising, ksl2024}).
CLE is parameterised by a value $\kk \in (8/3, 8)$. The three regimes $\kk \in (8/3, 4), \kk = 4$ and $\kk \in (4,8)$ exhibit qualitatively different behaviour and often need to be considered separately when proving general results.
For $\kk \in (8/3,4]$, the loops in a $\CLE_\kk$ are a.s.\ simple and do not intersect each other, whereas for $\kk \in (4,8)$ the loops a.s.\ intersect themselves and each other (although they are not self-crossing and do not cross each other). 
Throughout this paper, we will consider \textit{non-nested CLE}, meaning that a.s.\ no loop is contained inside another.

Throughout this paper, $\GG = (\ell_k)_{k \in \N}$ will denote a collection of loops distributed according to the law of a (non-nested) $\CLE_\kk$. Any fixed point $z \in D$ is a.s.\ contained inside a single loop of $\GG$, but there will exist exceptional points which are not contained in any loop.
We refer to the set of points in $\D$ not contained in any loop as the $\CLE_\kk$ \textit{carpet} for $\kk \in (8/3, 4]$ and the $\CLE_{\kk}$ \textit{gasket} for $\kk \in (4,8)$ (in analogy with the Sierpi\'nski carpet and gasket) and denote it by $\Uu$.
For all $\kk \in (8/3, 8)$ the set $\Uu$ a.s.\ has empty interior and if we let (in a slight abuse of notation) $\interior(\ell_k)$ denote the points surrounded by the loop $\ell_k$, it can be written as $\Uu = \D \sm \bigcup_{k \in \N} \,\interior(\ell_k)$.
When $\kk \in (8/3, 4]$, the interior of each loop is a.s.\ a connected set (and a Jordan domain), so the connected components of $\D\sm\Uu$ coincide with the interiors of the loops.
When $\kk \in (4,8)$, however, the interior of each loop is a.s.\ not connected, so there are many connected components corresponding to each CLE loop.
We denote by $(s_k)_{k \in \N}$ the collection of connected components of $\D\sm\Uu$, emphasising that $s_k$ can be identified with $\interior(\ell_k)$ when $\kk \in (8/3, 4]$, but not when $\kk \in (4,8)$. 
Our main result is the following.

\begin{theorem}\label{thm:diam}
Let $\kk \in (\tfrac83, 8) \sm \{4\}$, let $\GG$ be a $\CLE_\kk$
in $\D$ and let $\Uu$ denote its carpet/gasket. Enumerate the connected components of $\D \sm \Uu$ as $(s_k)_{k \in \N}$. Then, almost surely,
\begin{equation}\label{eq:diam_sum}
    \sum_{k = 1}^\infty \diam(s_k)^2 < \infty.
\end{equation}
\end{theorem}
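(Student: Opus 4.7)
The strategy is to reduce Theorem~\ref{thm:diam} to a first-moment bound on the number of components of diameter at least $r$, and to derive that bound using the coupling of $\CLE_\kk$ with space-filling $\SLE$ on a Liouville quantum gravity (LQG) disk.

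The reduction step is the layer-cake identity
\[
\sum_{k \geq 1} \diam(s_k)^2 \;=\; \int_0^4 \#\{k : \diam(s_k) > \sqrt{t}\}\, dt.
\]
If I can establish a first-moment bound of the form $\E[\#\{k : \diam(s_k) > r\}] \leq C r^{-\alpha}$ for some $\alpha < 2$ and all $r \in (0,1]$, then Fubini gives $\E[\sum_k \diam(s_k)^2] < \infty$, from which \eqref{eq:diam_sum} follows almost surely.

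To obtain such an estimate, I would couple our $\CLE_\kk$ with a $\gamma$-LQG quantum disk $(\D, h)$, with $\gamma = \sqrt{\kk}$ when $\kk \in (8/3, 4)$ and $\gamma = 4/\sqrt{\kk}$ when $\kk \in (4,8)$, together with a space-filling $\SLE_{\kk'}$ curve $\eta'$ (where $\kk' = 16/\kk$ in the first regime and $\kk' = \kk$ in the second) parametrized by quantum area. By the Miller--Sheffield $\CLE/\SLE/$LQG correspondences, each complementary component $s_k$ is swept out by $\eta'$ during a single time interval of duration $\mu_h(s_k)$, and the quantum boundary lengths and areas of the $(s_k)$ are encoded by the jumps, on a finite time window, of an $\alpha$-stable L\'evy process with $\alpha \in (1/2, 1)$ depending explicitly on $\kk$. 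Hölder continuity of $\eta'$ with respect to the Euclidean metric, in its quantum-area parametrization, then yields a deterministic exponent $\beta > 0$ and an a.s.\ finite random constant $M$ such that $\diam(s_k) \leq M \mu_h(s_k)^\beta$ for every $k$, so
\[
\sum_k \diam(s_k)^2 \;\leq\; M^2 \sum_k \mu_h(s_k)^{2\beta},
\]
and the stable structure of $(\mu_h(s_k))$ forces a.s.\ finiteness of the right-hand side as soon as $2\beta$ exceeds the associated L\'evy-intensity threshold.

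The main obstacle, and the real crux of the argument, is establishing a sharp enough Hölder exponent $\beta$ for the space-filling curve---one that beats the stable threshold uniformly across both regimes of $\kk$. I would attack this by applying Kolmogorov's continuity criterion to moment estimates on Euclidean diameters of $\SLE_{\kk'}(\underline{\rho})$ segments extracted from the mating-of-trees decomposition of $\eta'$, tracking exponents carefully and verifying the two regimes separately, since the coupling pictures differ on either side of $\kk = 4$.
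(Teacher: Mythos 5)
Your proposal takes a genuinely different route from the paper's, which never passes to LQG. The paper stays within imaginary geometry and the \emph{Euclidean}-area parametrisation of space-filling SLE: it rewrites the sum via Fubini as $\int_{\D}\diam(s_z)^2/\Area(s_z)\,d\mu(z)$, shows that a clockwise space-filling $\SLE_{\kk'}(\kk'-6)$ loop coupled with the same GFF sweeps each pocket of the carpet/gasket in a single time interval (Lemmas~\ref{lem:sfsle_pocket_interaction_48} and~\ref{lem:sfsle_pocket_interaction_834}), upgrades the Ghosal--Holden--Miller area-filling estimate to a quantitative tail bound (Proposition~\ref{prop:area_filling}), obtaining $\Area(s_z)\geq\diam(s_z)^{2\xi}$ for $\xi>1$ arbitrarily close to $1$ off a small event, and then controls $\E\bigl[\diam(s_z)^{-2(\xi-1)}\bigr]$ with the conformal-radius moment bound of \cite{ssw2009radii}. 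Both approaches rest on the same ``filled in one go'' fact, but otherwise they diverge.

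Two of your steps are gaps. First, you assert that each pocket is swept out during a single time interval as a consequence of the CLE/SLE/LQG correspondences. For $\kk\in(4,8)$ this is relatively accessible, but for $\kk\in(8/3,4)$ the $\CLE_\kk$ loops are the \emph{excursion} loops of the $\SLE_\kk(\kk-6)$ exploration branches, not the boundary loops of any mating-of-trees forest, and proving that the space-filling $\SLE_{\kk'}$ fills each in one go requires decomposing each excursion loop into interior GFF flow lines and ruling out that a flow line started outside a loop can enter it. This is Lemma~\ref{lem:sfsle_pocket_interaction_834}, a substantial argument, not an immediate corollary of the coupling.

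Second, and more decisively, the exponent matching you flag as the crux in fact fails. The quantum boundary lengths $L_k$ of the CLE loops are jumps of a $(\kk'/4)$-stable process and quantum area scales as $L^2$, so $\sum_k\mu_h(s_k)^{2\beta}<\infty$ forces $4\beta>\kk'/4$, i.e.\ $\beta>\kk'/16\in(1/4,1/2)$. On the other hand, the quantum mass of a Euclidean ball of radius $\delta$ is of order $\delta^{\gamma Q}=\delta^{2+\gamma^2/2}=\delta^{2+8/\kk'}$ already at $\mu_h$-typical points, and much smaller near thin points of the field, so no a.s.-finite Hölder constant exists for any exponent exceeding $(2+8/\kk')^{-1}$. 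Since $(2+8/\kk')^{-1}<\kk'/16$ for every $\kk'\in(4,8)$ (equality only at $\kk'=4$), the required $\beta$ is out of reach throughout the regime, even before thin-point corrections widen the gap. Passing to the \emph{Euclidean}-area parametrisation, where the corresponding exponent can be taken arbitrarily close to $\tfrac12$ and the integrand $\diam^2/\Area$ can be bounded pointwise, is precisely how the paper avoids this obstruction.
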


The above result has previously been proved in unpublished work by Rohde and Werness for the case $\kk \in (8/3, 4]$ \cite{rohde_werness2015}.
Details of this result will be included in \cite{rohde_upcoming}. 
They take a different approach to us to prove \eqref{eq:diam_sum}, and make use of the Brownian loop soup construction of $\CLE_\kk$ described in \cite{sw2012markovian}, which is only valid when $\kk \in (8/3, 4]$.
We instead use the link between CLE and space-filling SLE which applies for all $\kk \in (8/3, 8)$ except for $\kk = 4$. This leads to the two methods giving different ranges of $\kk$ where \eqref{eq:diam_sum} holds. 
Combining the two results, however, we can conclude that Theorem~\ref{thm:diam} holds for all values of $\kk \in (8/3, 8)$.

The significance of Theorem~\ref{thm:diam} lies in the context of the problem of \textit{uniformizing} the $\CLE_\kk$ carpet and gasket, and the uniformization of \textit{Sierpi\'nski carpets} more generally. 
A Sierpi\'nski carpet\footnote{What we have called a Sierpi\'nski carpet is sometimes referred to as a \textit{topological Sierpi\'nski carpet}, with the term Sierpi\'nski carpet being reserved for the standard Sierpi\'nski carpet. We have decided for simplicity to keep with the terminology used in \cite{ntalampekos_conformal_uniformization}.} is a closed subset of the Riemann sphere with empty interior which can be written as $\wh\C \sm \bigcup_{k \in \N} s_k$, where the sets $s_k$ are Jordan domains whose closures are pairwise disjoint, and where $\diam(s_k) \to 0$ in the spherical metric as $k \to \infty$. 
A classical result \cite{whyburn} states that all Sierpi\'nski carpets are homeomorphic to the standard Sierpi\'nski carpet and hence to each other.
For all $\kk \in (8/3, 8)$, $\Uu$ is a.s.\ a Sierpi\'nski carpet (when viewed as a subset of $\wh\C$). This is easy to see  when $\kk \in (8/3, 4]$, since $\Uu$ is exactly the set retained after removing countably many non-intersecting Jordan domains from $\D$, but in the $\kk \in (4,8)$ regime it is not as obvious and is proved here in Proposition~\ref{prop:gasket_is_carpet}.

The study of the uniformization of planar domains has a long history, with Koebe conjecturing that every domain in the Riemann sphere can be conformally mapped to a \textit{circle domain}, a domain in the sphere whose complement consists of a collection of disjoint closed disks and points \cite{koebe1908}.
This conjecture was has since been proved for finitely connected domains \cite{koebe1920}, countably connected domains \cite{he_schramm1993} and for the class of so-called \textit{cofat} domains which may not be countably connected \cite{schramm1995}. The general conjecture remains open, however.
The uniformization of \textit{Sierpi\'nski carpets} has more recently been studied in \cite{bonk2011, ntalampekos2020potential_theory, ntalampekos_conformal_uniformization}.
To uniformize a Sierpi\'nski carpet $S$ one wants to map $S$ to a \textit{round Sierpi\'nski carpet}, which is a Sierpi\'nski carpet whose complementary components are disks. In the case of planar domains one requires the uniformizing map to be conformal on the domain.
In the case of Sierpi\'nski carpets, one needs also to define suitable conditions on the map, although exactly what conditions one requires varies depending on the generality of the carpets considered. 
We refer to \cite{ntalampekos_conformal_uniformization} for further discussion.

In \cite{bonk2011}, a uniformization result was proved for Sierpi\'nski carpets which satisfy certain geometric conditions. There, the uniformizing map can be taken to be quasisymmetric and is shown to be unique up to M\"obius transformations.
The $\CLE_\kk$ carpet/gasket a.s.\ does not satisfy these geometric conditions however so these results cannot be applied in this case.
In \cite{ntalampekos_conformal_uniformization}, Ntalampekos introduced the concept of a \textit{Sierpi\'nski packing} which
is
a set $Y \subset \wh\C$ that can be written as $Y = \wh\C \sm \bigcup_{k \in \N} q_k$ where the $q_k$ are pairwise disjoint closed Jordan domains\footnote{In \cite{ntalampekos_conformal_uniformization}, the sets $q_k$ are allowed to be closed connected sets whose complements are connected, but to streamline our presentation we are more restrictive here.} and where $\diam (q_k) \to 0$ as $k \to \infty$.
Ntalampekos showed that if $Y$ is a Sierpi\'nski packing where $\sum_k \diam(q_k)^2 < \infty$, then $Y$ can be 
uniformized in the following sense. There 
exists a \textit{round Sierpi\'nski packing} $X = \wh\C\sm\bigcup_{k \in \N}p_k$, where each $p_k$ is now also required to be a (non-degenerate) disk, and a continuous map $H \colon \wh\C \to \wh\C$ which satisfies $H\nv(\interior (q_k)) = \interior (p_k)$ for all $k \in \N$, and satisfies also the other requirements of \cite[Theorem~1.1]{ntalampekos_conformal_uniformization}. 
If $C = \wh\C \sm \bigcup_{k \in \N} s_k$ is a Sierpi\'nski carpet, there is a corresponding Sierpi\'nski packing $Y = \wh\C\sm\bigcup_{k\in\N}\ov{s_k}$, and we have that $\ov{Y} = C$.
It follows that any Sierpi\'nski carpet satisfying $\sum_k \diam(s_k)^2 < \infty$ can be uniformized in this way.
In particular, in contrast to \cite{bonk2011} there is no geometric requirement which means that Theorem~\ref{thm:diam} (along with Proposition~\ref{prop:gasket_is_carpet} and the corresponding result for $\kk = 4$ from \cite{rohde_werness2015}) shows immediately that this result can be applied\footnote{Using \cite{rohde_werness2015}, this result is stated in \cite{ntalampekos_conformal_uniformization} as Corollary~1.4 with the assumption that $\kk \in (8/3,4]$. Due to Theorem~\ref{thm:diam}, this corollary now holds for the full range $\kk \in (8/3, 8)$.}  to the $\CLE_\kk$ carpet/gasket for all $\kk \in (8/3, 8)$.

If $X$ is the round Sierpi\'nski packing one obtains after applying \cite[Theorem~1.1]{ntalampekos_conformal_uniformization}, it is not known whether $\ov{X}$ is a Sierpi\'nski carpet, since it may not have empty interior. Also, unlike in \cite{bonk2011}, uniqueness is not proved for either the uniformizing map or the resulting round packing.
If one could prove uniqueness of the packing (up to M\"obius transformations) one would obtain a natural law on random round Sierpi\'nski packings arising from CLE.
It would be interesting then to ask if there is an axiomatic characterisation of this law (like there is for CLE itself) and also how it depends on the parameter $\kk$.
We refer also to \cite[Questions~1.5 and~1.6]{ntalampekos_conformal_uniformization} for a similar discussion of such implications.
CLE carpet uniformization for $\kk \in (8/3, 4]$ also featured in \cite{rohde_werness2015} and will be studied in \cite{rohde_upcoming}.

\subsection*{Notation}
In the remainder of this paper, $\kk$ will be a value in $(8/3, 8)$ which is not equal to $4$. We define $\kk' = 16/\kk$ if $\kk \in (8/3, 4)$ and $\kk' = \kk$ if $\kk \in (4,8)$, so that $\kk'$ is always in $(4,8)$. 
While usually (as is often the convention) we use the symbol $\kk$ when $\kk \in (8/3,4)$ and $\kk'$ when it is in $(4,8)$, there are situations when we want to discuss both cases simultaneously. 
This leads us to sometimes using $\kk$ for the full range and defining $\kk'$ in the slightly strange way above.

\subsection*{Outline} 
Next we give an outline of both the paper and the proof of Theorem~\ref{thm:diam}. In Section~\ref{sec:prelims} we define Schramm--Loewner evolution (SLE), the Gaussian free field (GFF) and the relationship between them. 
In Section~\ref{sec:cle}, we recall (using the results of \cite{s2009cle, ms2016imag1, msw2017cleperc}) that for\footnote{This is also true for $\kk = 4$ but we do not include the construction here.} $\kk \in (8/3, 8) \sm \{4\}$, an instance $\GG$ of a $\CLE_\kk$ can be coupled with a particular GFF $h$ on $\D$ in such a way that $\GG$ is measurable with respect to $h$.
A space-filling $\SLE_{\kk'}(\kk'-6)$ loop $\eta'$ on $\D$ can also be coupled with $h$ so that it too is a measurable function of the field (see Section~\ref{sec:sfsle}).
Lemmas~\ref{lem:sfsle_pocket_interaction_48} and~\ref{lem:sfsle_pocket_interaction_834} show that under this coupling, $\eta'$ fills in each pocket of $\D\sm\Uu$ \textit{in one go}; that is, once $\eta'$ enters this pocket, it hits every point in the pocket before leaving its closure. 
We remark that the interaction between the space-filling $\SLE_{\kk'}(\kk'-6)$ curve $\eta'$ and the loops of $\GG$ is different depending on whether $\GG$ is a $\CLE_\kk$ (for $\kk \in (8/3,4)$) or a $\CLE_{\kk'}$ (when $\kk = \kk' \in (4,8)$), but that in both cases, it fills in each pocket of $\D\sm\Uu$ in one go.
In Section~\ref{sec:area_filling}, we extend a result of \cite{ghm2020almost} to prove that with large probability $\eta'$ does not travel a large distance without filling in a ball of a comparable area.
Finally, in Section~\ref{sec:proof}, we combine the previous results to show that, since $\eta'$ fills in pockets of $D\sm\Uu$ in one go, and $\eta'$ satisfies the area-filling property of Section~\ref{sec:area_filling}, the pockets of $\D\sm\Uu$ will with large probability not have large diameter but small area which allows us to prove Theorem~\ref{thm:diam}.

\subsection*{Acknowledgements}  J.M.\ was supported by the ERC consolidator grant ARPF (Horizon Europe UKRI G120614).
C.D.\ was supported by EPSRC grant EP/W524633/1 and a studentship from
Peterhouse, Cambridge.

\section{Preliminaries}\label{sec:prelims}
\subsection{Schramm--Loewner evolution}

\subsubsection{Chordal SLE}
For a more detailed description, we refer to \cite{lawler2008conformally}. For $z \in \h$ and a continuous function $W\colon [0,\infty) \to \R$ called the \textit{driving function}, the Loewner differential equation is given by
\begin{equation}\label{eq:chordal_loewner}
\frac{\del g_t(z)}{\del t} = \frac{2}{g_t(z) - W_t}, \qquad g_0(z) = z.
\end{equation}
For a fixed $z \in \h$, $g_t(z)$ has a unique solution up until $\tau_z = \sup\{t \geq 0 \colon \im\, g_t(z) > 0\}$. For a fixed $t > 0$, let $K_t = \{z \in \h \colon \tau_z \leq t\}$. We call $K_t$ the Loewner hull at time $t$, and view $(K_t)_{t \geq 0}$ as an increasing collection of hulls in $\h$. 
To define Schramm--Loewner evolution (SLE), we let $W$ be the random function $W_t = \sqrt{\kk} B_t$, where $B$ is a one-dimensional Brownian motion started from $0$ and $\kk \in (0,\infty)$. In this case, it has been shown in \cite{rs2005basic, lsw2004lerw, am2022sle8} that there a.s.\ exists a (unique) random continuous curve $\eta \colon [0,\infty) \to \ov\h$ which generates the hulls $K_t$ in the sense that $K_t$ is the complement in $\h$ of the unbounded component of $\h\sm \eta([0,t])$.
We say that $\eta$ is distributed as an $\SLE_\kk$. Almost surely, for $\kk \in (0, 4]$, the curve is simple, for $\kk \in (4,8)$ it intersects itself but is not space-filling, and for $\kk \geq 8$ it is a.s.\ space-filling.
For all values of $\kk$, the curve $\eta$ does not cross itself.

We can generalise SLE by allowing the driving function $W$ to have a different law. The $\SLE_{\kk}(\rho)$ process with \textit {force point} at $a \in \R$ is defined by considering the SDE
\begin{equation}\label{eq:chordal_force_point}
    dW_t = \sqrt{\kk}dB_t + \frac{\rho}{W_t - O_t}dt, \qquad dO_t = \frac{2}{O_t - W_t}dt,
\end{equation}
where $W_0 = 0$ and $O_0 = a$. When $\rho > -2$, there exists a unique solution $(W, O)$ to this SDE if we further require that $W - O$ does not change sign and is instantaneously reflecting at $0$ (in the sense that the set of times that $W_t = O_t$ has Lebesgue measure $0$), and that $(W, O)$ is adapted to the filtration generated by $B$. If $a = 0$ (which will be the case in this paper), we specify either $a = 0^+$ or $a = 0^-$. If $a = 0^+$ (resp.\ $a = 0^-$) we require that $O_t - W_t$ is always non-negative (resp.\ non-positive) and we view the force point as lying an infinitesimal distance to the right (resp.\ left) of $0$. We refer to \cite{ms2016imag1} for further details.
Letting $W$ be the driving function in \eqref{eq:chordal_loewner}, as in the case of standard $\SLE_\kk$ we again obtain a random continuous curve $\eta$ which generates the random hulls $(K_t)$ (see e.g.\ \cite{ms2016imag1}). We say that $\eta$ has the law of an $\SLE_\kk(\rho)$ with force point at $a$.
We can extend the definition of $\SLE_\kk(\rho)$ to other simply connected domains $D \subsetneq \C$ via conformal mapping (see e.g.\ \cite{ms2016imag1}).
One can also consider $\SLE_{\kk}(\rho_L;\rho_R)$ with two force points (or more). We refer to \cite{ms2016imag1} for its definition and properties in this case.

\subsubsection{Radial SLE}\label{subsubsec:radial_sle}
We will define radial $\SLE_\kk(\rho)$ in the case that $\rho > -2$ using the radial Loewner equation,
\begin{equation}\label{eq:radial_loewner}
    \frac{\del g_t(z)}{\del t} = g_t(z) \frac{W_t + g_t(z)}{W_t - g_t(z)},\qquad g_0(z) = z.
\end{equation}
Here, $W \colon [0,\infty) \to \del \D$ is a continuous function again called the driving function. Similar to the chordal case, for $z \in \D$, $g_t(z)$ has a unique solution up until the time $\tau_z = \sup\{t > 0 \colon g_t(z) \in \D\}$, and we define the hull $K_t = \{z \in \D \colon \tau_z \leq t\}$. 
When $W_t = e^{i\sqrt{\kk}B_t}$ where $B$ is a one-dimensional Brownian motion such that $W_0 = w$, and $\kk \in (0,\infty)$, the hulls $(K_t)$ are a.s.\ generated by a continuous random curve $\eta$ in the sense that $K_t$ is the complement (in $\D$) of the connected component of $\D \sm \eta([0,t])$ containing $0$. The curve $\eta$ starts at $w$ and tends to $0$ as $t \to \infty$ a.s. 
We say that $\eta$ is a radial $\SLE_\kk$ from $w$ to $0$. We can define radial $\SLE_\kk$ in any other simply connected domain $\D \subsetneq \C$ from any prime end of $D$ to a point $z \in D$ by conformal mapping.

As in the chordal case, we can introduce a force point and define radial $\SLE_\kk(\rho)$. Suppose that the pair $(W,O)$ satisfies
\begin{equation}
    dW_t = -\frac{\kk}{2}W_t \,dt + i\sqrt{\kk}\,W_t \,dB_t - \frac{\rho}{2} W_t \frac{W_t + O_t}{W_t - O_t}dt, \qquad dO_t = O_t \frac{W_t + O_t}{W_t - O_t}dt,
\end{equation}
where $W_o = w$ and $O_0 = a$. This equation has a unique solution up until the first time that $W_t = O_t$. We will (only) need to consider the case that $W_0 = O_0$, however, so we will need to find a way to define a solution $(W,O)$ which is valid for all time. To this end, following \cite{msw2014gasket}, for $\rho > -\kk/2 - 2$ we define the continuous process $\tht \colon [0,\infty) \to [0, 2\pi]$ which satisfies the SDE
\begin{equation}
    d\tht_t = \sqrt{\kk}dB_t + \frac{\rho + 2}{2}\cot(\tht_t/2)dt
\end{equation}
on intervals when $\tht \in (0,2\pi)$, and which is instantaneously reflecting at its endpoints in the sense that the set of times $\{t \colon \tht_t \in \{0,2\pi\}\}$ has Lebesgue measure $0$. This process is pathwise unique and its law is determined by $\tht_0$.

We then define
\begin{equation}
    \arg W_t = \arg w + \sqrt{\kk} B_t + \frac{\rho}{2}\int_0^t \cot(\tht_s/2)ds,
\end{equation}
which is a.s.\ finite (see \cite{msw2014gasket}). We define radial $\SLE_\kk(\rho)$ starting from $w$ with force point at $o$ to be the solution to \eqref{eq:radial_loewner} with driving function $W$, where $\tht_0$ is determined by $w = o e^{i\tht_0}$. 
If $w = o$, we again need to specify whether the force point lies infinitesimally to the right or left of $w$. If $w = oe^{i0^+}$ (resp.\ $w = oe^{i0^-}$) we set $\tht_0 = 0$ (resp.\ $\tht_0 = 2\pi$) and view the force point as lying infinitesimally to the left (resp.\ right) of $w$. 
For $\kk' \in (4,8)$ and $\rho = \kk' - 6$, the $\SLE_{\kk'}(\kk' - 6)$ process is a.s.\ generated by a continuous curve $\eta'$ which starts at $w$ and targets $0$. We refer to \cite{msw2014gasket} for further details.
We can define $\SLE_{\kk'}(\kk' - 6)$ from $x \in \del D$ to $z \in D$ in any simply connected domain $D \subsetneq \C$ by conformal mapping.
As in the chordal case, the $\SLE_{\kk'}(\kk' - 6)$ processes are target invariant in the sense that if $\eta'_u$ and $\eta'_v$ are $\SLE_{\kk'}(\kk' - 6)$ curves from a point $w \in \del \D$ to points $u,v \in \del \D$, then $\eta'_u$ and $\eta'_v$ can be coupled in such a way that they are equal (up to reparametrisation) until the time that $\eta'_u$ separates $u$ from $v$, and evolve independently thereafter \cite{s2009cle}.

\subsubsection{Generalised $\SLE_\kk(\rho)$ for $\rho < -2$}\label{subsubsec:gen_sle}
When $\rho \leq -2$, it is more difficult to define $\SLE_\kk(\rho)$ processes. We will be interested in the range $\rho \in (-2 - \kk/2, -2)$, for which a family of processes denoted by $\SLE_\kk^\bb(\rho)$, for $\bb \in [-1,1]$, were introduced in \cite{s2009cle}. In particular, we will have $\kk \in (8/3, 4)$ and $\rho = \kk - 6$.
In this paper, we will need only the $\bb = 1$ case, so we will restrict our attention to this, and denote $\SLE_\kk^\bb(\rho)$ simply by $\SLE_\kk(\rho)$. We will first define the chordal $\SLE_\kk(\kk-6)$ process on $\h$ from $0$ to $\infty$. We will partially follow the description of these processes given in \cite{werner_wu}.

Suppose that we have a pair of processes $(W, O)$ satisfying \eqref{eq:chordal_force_point}. Define $X_t = (W_t - O_t)/\sqrt{\kk}$. By \eqref{eq:chordal_force_point}, $X$ satisfies
\begin{equation}\label{eq:bessel}
    dX_t = dB_t + \frac{\kk - 4}{\kk X_t}dt,
\end{equation}
meaning that $X$ is a Bessel process of dimension $\dd = 3 - 8/\kk \in (0,1)$ (see e.g.\ \cite[Chapter XI]{revuz2013continuous}). There exists a solution to \eqref{eq:chordal_force_point} up until the first time that $X_t = O_t$, but we will need to consider the case that $W_0 = O_0 = 0$, so this alone does not suffice.
In order to construct a solution which is valid for all time (which satisfies \eqref{eq:chordal_force_point} when $W_t \neq O_t$) we will take a different approach.

We first define $Z$ to be the square Bessel process of dimension $\dd \in (0,1)$, starting from $z_0 \geq 0$, which is the unique (non-negative) strong solution to the SDE
\begin{equation}
    dZ_t = 2\sqrt{Z_t}dB_t + \dd dt.
\end{equation}
This solution exists for all time (see \cite[Chapter XI]{revuz2013continuous} for more information on Bessel and square Bessel processes).
We then define $X_t = \sqrt{Z_t}$ which we call the Bessel process of dimension $\dd$ started from $\sqrt{z_0}$. Then, $X_t$ satisfies \eqref{eq:bessel} on time intervals where $X_t > 0$, but does not satisfy the integrated form of the equation for all time, since the integral $\int X_t\nv dt$ in general will not converge when $\dd \leq 1$.

However, as explained in \cite{s2009cle} (see also \cite{werner_wu}) we can define the principal value of the integral $\int X_t\nv dt$ and obtain a process $I_t$ which is a.s.\ continuous, satisfies $dI_t = (1/X_t)dt$ on intervals where $X_t > 0$, and which is adapted to the filtration of $B$. 
We then define $W_t = \sqrt{\kk} X_t + 2\sqrt{\kk}I_t$ and $O_t = 2\sqrt{\kk}I_t$. The pair $(W,O)$ is then seen to satisfy \eqref{eq:chordal_force_point} whenever $W_t - O_t \neq 0$ (note that $W_t - O_t$ is non-negative) and which is instantaneously reflecting (in the sense that the set of times where $W_t = O_t$ has Lebesgue measure $0$ a.s.).
We then define $(K_t)$ to be the Loewner hulls obtained from \eqref{eq:chordal_loewner} with driving function equal to $W$. It was shown in \cite{msw2017cleperc} that there a.s.\ exists a curve $\eta$ which generates the hulls $K_t$. 
Furthermore, in \cite{msw2017cleperc} it was shown that $\eta$ consists of a trunk, $\eta'$, which has the law of an $\SLE_{\kk'}(\kk'-6)$ and loops hanging off the right side of this trunk, each of which corresponds to an excursion of the process $W_t - O_t$ away from $0$.
We call these loops \textit{excursion loops} of $\eta$ to differentiate them from loops $\eta$ may make corresponding to the trunk $\eta'$ itself tracing a loop.
The trunk is exactly the set of points traced by $\eta$ at times when $O_t = W_t$.

As in the case of standard $\SLE_\kk(\rho)$, we can extend the definitions to arbitrary simply connected domains $D \subsetneq \C$ and different start and endpoints using conformal mapping. 
It is explained in \cite{msw2017cleperc} that $\SLE_\kk(\kk-6)$ processes continue to have a target invariance property when $\kk \in (8/3, 4)$. Precisely, if $\eta_u$ and $\eta_v$ are two $\SLE_\kk(\kk-6)$ curves in $D$ started from $x$ and targeted at $u$ and $v$ in $\del D$ respectively, then $\eta_u$ and $\eta_v$ can be coupled so that they agree until they separate $u$ and $v$, and evolve independently afterwards.

To define radial $\SLE_\kk(\kk-6)$ when $\kk \in (8/3, 4)$ we do not use the radial Loewner equation, but instead build the curve iteratively using chordal $\SLE_\kk(\kk-6)$ and target invariance  (following \cite{msw2017cleperc} and \cite{werner_wu}).
Let $D \subsetneq \C$ be a simply connected domain with $x \in \del D$ a prime end and $z \in D$. To define the radial $\SLE_\kk(\kk-6)$ from $x$ to $z$ we first choose a point $y_1 \in \del D$ and sample an $\SLE_\kk(\kk-6)$ from $x$ to $y_1$ up until the first time, $\tau_1$, that $y_1$ is separated from $z$.
Let $D_2$ be the connected component of $D \sm \eta_1([0,\tau_1])$ containing $z$. Choose $y_2 \in \del D_2$ (in some measurable way) and let $\eta_2$ be an $\SLE_\kk(\kk-6)$ from $\eta_1(\tau_1)$ to $y_2$, up until $\eta_2$ separates $y_2$ from $z$. 
We continue in this way, defining curves $(\eta_n)$ for $n \in \N$. We let $\eta$ be the curve obtained by joining these curves in order and call $\eta$ the radial $\SLE_\kk(\kk-6)$ from $x$ to $z$. The law of $\eta$ does not depend on the choice of the points $(y_n)$.

\subsection{The Gaussian free field}\label{subsec:gff}
\subsubsection{The Gaussian free field on a simply connected domain}
Let $D \subsetneq \C$ be a simply connected domain and let $C_c^\infty(D)$ be the set of smooth, compactly supported functions on $D$. For $f,g \in \cci(D)$ we define their \textit{Dirichlet inner product} to be 
\begin{equation}\label{eq:dirichlet_norm}
    (f,g)_\nabla = \frac{1}{2\pi}\int_D \nabla f(z) \cdot \nabla g(z) \,dz.
\end{equation}
Let $H(D)$ be the closure of $\cci(D)$ with respect to \textit{Dirichlet norm} induced by this inner product. 
The Gaussian free field (GFF) on $D$ with zero boundary conditions is then defined as the (formal) series
\begin{equation}
    h = \sum_{n=1}^\infty \aal_n f_n
\end{equation}
where $(f_n)$ is an orthonormal basis of $H(D)$ with respect to the Dirichlet norm, and $(\aal_n)$ is a collection of i.i.d.\ standard normal random variables. A.s.\ this sum does not converge in $H(D)$, but does converge as a distribution. The law of $h$ does not depend on the choice of basis. 
If $\fh$ is a harmonic function on $D$ then the GFF with boundary conditions $\fh$ on $D$ is defined to be $h + \fh$, where $h$ is a zero-boundary GFF. 
If $f$ is a bounded measurable function on $\del D$, then it has a unique harmonic extension $\fh$ on $D$, so it suffices in this case to specify the values of $\fh$ on $\del D$.
We refer to \cite{bp_gff, she2007gff} for further details.

\subsubsection{The whole-plane GFF}
In the case that $D = \C$, the definition of the GFF is slightly more complex, and we are led to first introduce the \textit{whole-plane GFF modulo an additive constant}. Following \cite{ms2017ig4}, let $\cci(\C)$ be the set of smooth functions on $\C$ with compact support and mean $0$ and let $\cciz(\C)$ be the subset of $\cci(\C)$ for which $\int f dx = 0$. For $f,g \in \cci(\C)$ define their Dirichlet norm $(f,g)_\nabla$ by \eqref{eq:dirichlet_norm} with the integral over $\C$. We define $H(\C)$ to be the closure of $\cci(\C)$ with respect to the corresponding Dirichlet norm, modulo an additive constant. Let $(f_n)$ be an orthonormal basis for $H(\C)$, let $(\aal_n)$ be a collection of i.i.d.\ standard normal random variables and define
\begin{equation}
    h = \sum_{n=1}^\infty \aal_n f_n.
\end{equation}
As before, $h$ a.s.\ does not converge in $H(\C)$, but does converge as a distribution modulo an additive constant. That is, a.s.\ simultaneously for all $f \in \cciz(\C)$, the series
\begin{equation}
    \sum_{n=1}^\infty \aal_n (f_n, f)
\end{equation}
converges to a value which we call $h(f)$, and the map $h \colon f \mapsto h(f)$ is a continuous linear functional on $\cciz(\C)$. We refer to $h$ as the whole-plane GFF modulo an additive constant.

To define $h$ as a random distribution, as opposed to a distribution modulo an additive constant, we have to fix this constant in some way. This can be done by choosing a fixed element $g_0 \in H(\C)$ with $\int g_0 dz = 1$ and requiring that $h(g_0) = 0$. 
For any other $f \in \cci(\C)$ we then define $h(f) = h(f + ag_0)$ where $a \in \R$ is chosen such that $f + ag_0 \in \cciz(\C)$. This allows us to view $h$ as a distribution (instead of a distribution modulo additive constant) but the law of $h$ will then depend on how we choose to fix the additive constant.
We may also define $h$ as a \textit{whole-plane GFF modulo $r$}, where $r > 0$. This is done by letting $h$ be a GFF modulo an additive constant, sampling a uniform random variable $U$ on $[0, r)$, and requiring that $h(g_0) \equiv U \pmod r$. We can then define $h(f)$ as a value $\mod r$ using $g_0$. 
In this case $h$ is a distribution modulo $r$. We refer to \cite{ms2017ig4} for a more detailed explanation.

The GFF (in both the whole-plane and non--whole-plane case) is regular enough that its average on a given circle can be defined a.s. 
That is, $h(\rho_{z,\ee})$ exists a.s.\ where $\rho_{z,\ee}$ denotes the uniform measure on $\del B(z,\ee)$ with total measure $1$ for some $z \in \C$ and $\ee > 0$ (see e.g.\ \cite{bp_gff}).
When $h$ is a whole-plane GFF the additive constant must be fixed in some way for this to make sense, however.
It turns out that we are able to fix the additive constant of the whole-plane GFF in such a way that $h(\rho_{z,\ee}) = 0$. Then, $h$ is a random distribution, defined on all elements of $\cci(\C)$, where $h(\rho_{z,\ee})$ is defined and equal to $0$.
A similar procedure is explained in \cite[Section~6.3]{bp_gff}. 
Similarly, for $r > 0$ we can let $U$ be a uniform random variable on $[0,r)$ and require that $h(\rho_{z,\ee}) = U \pmod r$. Under this normalisation $h$ is a whole-plane GFF modulo $r$.
Finally we can also define the whole-plane GFF so that $h(\rho_{z,\ee})$ is uniform in $[0,r)$. This involves taking $U$ as before and then requiring that $h(\rho_{z,\ee}) = U$. 
Note that $h$ is now a distribution, not a distribution modulo $r$. In the setting of imaginary geometry (introduced below) it is natural to consider the whole-plane GFF as a distribution modulo $2\pi\chi$, or, alternatively, normalised so that $h(\rho_{0,1})$ is uniform in $[0, 2\pi\chi)$.

\subsubsection{Markov property and local sets}
The GFF has a natural Markov property \cite{bp_gff, ms2017ig4}. Let $A$ be a closed subset of the domain $D$ (so that $D\sm A$ has regular boundary --- see \cite{bp_gff}). 
The GFF can then be written as $h = \fh + h^0$, where $\fh$ is a random distribution in $D$ which is a.s.\ equal to a harmonic function on $D \sm A$, $h^0$ is a random distribution with the law of a zero-boundary GFF on $D \sm A$, and $\fh$ and $h^0$ are independent.
In the case that $D = \C$, and $h$ is a whole-plane GFF, then the same statement holds, but if $h$ is viewed as a distribution modulo an additive constant (or modulo $r$), then $\fh$ is also viewed as a distribution modulo an additive constant (or modulo $r$).

A local set $A$ is a random closed subset of $D$ which can be coupled with a GFF $h$ such that there exist two distributions $h^0$ and $\fh$ such that $\fh$ is a.s.\ harmonic on $D \sm A$ and conditional on $A$ and $\fh$, $h^0$ has the law of a zero-boundary GFF on $D \sm A$.
The same statement holds when $D = \C$, up to $\fh$ possibly being defined modulo constants. 
Often in the cases we will consider, the distribution $\fh$ is determined entirely by the harmonic function $\fh|_{D \sm A}$, and we can view $\fh$ simply as a harmonic function.
In this case, we may be able to characterise $\fh$ by its \textit{boundary conditions} on $\del(D\sm A)$. We also sometimes refer to these as the boundary conditions of $h$ given $A$.
Local sets were first introduced in \cite{schramm2013contour}. We refer to this paper and to \cite{ms2016imag1, ms2017ig4} for further details.

\subsection{Imaginary geometry}
\textit{Imaginary geometry} is the general term given to the coupling between the GFF and SLE described in \cite{ms2016imag1, ms2017ig4}. Heuristically, it involves viewing SLE curves as \textit{flow lines} of a vector field $e^{ih/\chi}$ where $h$ is a GFF.
However, since $h$ is a distribution and not a function, the vector field $e^{ih/\chi}$ does not exist so the coupling has to be defined in a different way. 
We will give a brief overview here of some of the results of \cite{ms2016imag1, ms2017ig4}. We refer to those two papers for further details.

\subsubsection{Flow lines starting from the boundary}
Fix $\kk \in (0,4)$ and define 
\begin{equation}\label{eq:ig_constants}
    \kk' = \frac{16}{\kk},\quad \la = \frac{\pi}{\sqrt\kk},\quad \chi = \frac{2}{\sqrt\kk} - \frac{\sqrt\kk}{2},\quad \la' = \frac{\pi}{\sqrt{\kk'}} = \la - \frac{\pi}{2}\chi. 
\end{equation}
Suppose $x_R \geq 0$ and let $\rho > -2$. We refer to $x_R$ as a force point.
Let $h$ be a GFF on $\h$ with boundary conditions given by $-\la$ on $\R_-$, $\la$ on $(0, x_R)$ and $\la(1 + \rho)$ on $(x_R, \infty)$. We will mainly be interested in the case that $x_R = 0$ (we view the force point as lying infinitesimally to the right of $0$, and sometimes write $x_R = 0^+$).
Let $\eta$ be an $\SLE_\kk(\rho_R)$ with force point at $x_R$ with associated processes $W$ and $O$ satisfying \eqref{eq:chordal_force_point}, and for each time $t \geq 0$ let the hull $K_t$ be the complement of the unbounded connected component of $\eta([0, t])$. Let $f_t \colon \h \sm K_t \to \h$ be the unique conformal map sending $\eta(t)$ to $0$, fixing $\infty$ and with $f_t(z)/z \to 1$ as $z \to \infty$.
Let $\fh_t^0$ be the harmonic function on $\h$ with boundary conditions equal to $-\la$ on $\R_-$, $\la$ on $(0, O_t - W_t)$ and $\la(1 + \rho)$ on $(O_t - W_t, \infty)$.
Then \cite[Theorem~1.1]{ms2016imag1} states that $\eta$ can be coupled with $h$ such that for any stopping time $\tau$ (for the natural filtration of $(W,O)$), the set $\eta([0, \tau])$ is a local set of $h$, and $h$ can be decomposed as $h = \fh_\tau + h^0$, where $\fh_\tau = \fh_\tau^0 \circ f_\tau - \chi \arg f_\tau'(z)$ and $h^0$ is a zero-boundary GFF on $\h \sm K_t$.
It is also shown in \cite{ms2016imag1} that under this coupling, $\eta$ is measurable with respect to $h$. We refer to $\eta$ as the \textit{flow line (of angle zero)} of $h$ (starting from $0$ and targeted at $\infty$).

For $\kk' > 4$, suppose $h$ has boundary conditions $\la'$ on $\R_-$, $-\la'$ on $(0, x_R)$ and $-\la'(1 + \rho)$ on $(x_R, \infty)$. Then an $\SLE_{\kk'}(\rho)$ curve $\eta'$ can be coupled with $h$ in the same way as above, except that we redefine $\fh_t^0$ to be the harmonic function on $\h$ with boundary conditions equal to $\la'$ on $\R_-$, $-\la'$ on $(0, O_t - W_t)$ and $-\la'(1 + \rho)$ on $(O_t - W_t, \infty)$.
This coupling is again unique, and $\eta'$ is measurable with respect to $h$. We say that $\eta'$ is the \textit{counterflow line} of $h$.

Let $\tht \in \R$. We say that $\eta$ is a flow line of angle $\tht$ of a GFF $h$ on $\h$ if $\eta$ is a flow line of $h + \chi\tht$ as described above. We sometimes refer to the flow line above as the flow line of angle zero. 
Note that we need to make sure the boundary conditions of $h$ are such that the boundary conditions of $h + \chi\tht$ are of the form described above.
Flow (and counterflow) lines of angle $\tht$ can be defined in other simply connected domains $D \subsetneq \C$ via conformal mapping. We refer to \cite{ms2016imag1} for further details, although it is worthwhile to mention the following fact explicitly here. 
Let $\psi \colon D \to \h$ be a conformal map. Then if $\eta_\h$ is a (counter)flow line of a GFF $h_\h$ on $\h$, then $\eta := \psi\nv(\eta_\h)$ will be a (counter)flow line of the field $h = h_\h \circ \psi - \chi\arg\psi'$.
We refer to the transformation $(\h, h_\h) \mapsto (D, h = h_\h \circ \psi - \chi\arg\psi')$ as a \textit{transformation of imaginary surfaces}.

\subsubsection{Interior flow lines}
In \cite{ms2017ig4} it was explained how to construct flow lines started from interior points of the domain $D$, which we call \textit{interior flow lines}. We sometimes refer to flow lines started from the boundary as \textit{chordal} flow lines to distinguish them from interior flow lines.
We describe the interior flow line coupling now. Let $D$ be a simply connected domain and let $h$ be a GFF on $D$.
We allow the case that $D = \C$ but in this case $h$ is a whole-plane GFF viewed as a distribution modulo $2\pi\chi$.
As explained in \cite{ms2017ig4}, for a fixed $\kk \in (0,4)$ and $z \in D$ there exists a coupling between a continuous curve $\eta$, starting from $z$, and $h$ such that for any $\eta$-stopping time, the set $\eta([0,\tau])$ is a local set of $h$, where the conditional boundary conditions for $h$ given\footnote{Unlike in the chordal case, to determine the boundary conditions, we need more information than just the curve $\eta([0,\tau])$. We also need to know the values of $h$ on an infinitesimal neighbourhood around the curve. We refer to \cite{ms2017ig4} for a precise statement.} $\eta([0,\tau])$ are the so-called \textit{flow line boundary conditions} described in \cite[Theorem~1.1]{ms2017ig4}. 
Under this coupling, $\eta$ is stopped when it first hits $\del D$, but if $D$ has boundary conditions of a certain form, then $\eta$ can be extended by sampling (repeatedly, if necessary) chordal flow lines of the field conditional on $\eta$ up until this time.
In this coupling, $\eta$ is measurable with respect to $h$. In fact, if $h$ changes by a constant multiple of $2\pi\chi$ (when $D \neq \C$) then $\eta$ is left unchanged, so $\eta$ is measurable with respect to $h$, viewed as a distribution modulo $2\pi\chi$.
This gives some explanation as to why we consider $h$ as a distribution modulo $2\pi\chi$ in the whole-plane case (see \cite{ms2017ig4} for further details).
If $\tht \in \R$ we define $\eta$ to be a flow line of $h$ of angle $\tht$ if it is a flow line of $h + \chi\tht$.

As explained in \cite{ms2017ig4}, we can also define counterflow lines of a GFF $h$ targeted at interior points $z \in D$. Then, there is a \textit{duality} between flow and counterflow lines of $h$. 
It is proved in \cite{ms2017ig4} that if $\eta'$ is the counterflow line from $x \in \del D$ to $z \in D$, then a.s.\ the left and right boundaries of $\eta'$ are equal to the (interior) flow lines of $h$ started from $z$ of angle $\pi/2$ and $-\pi/2$ respectively.

To avoid repeating ourselves many times in the paper, we will use the following shorthand. If $h$ is a GFF on $\D$, then we will say that $h$ has boundary conditions given by $a$ \textit{to the left of $-i$} and by $a - 2\chi\pi$ \textit{to the right of $-i$} to mean that $h = h_\h \circ \psi - \chi\arg(\psi')$ where $h_\h$ is a GFF on $\h$ with boundary conditions given by $a$ on $\R_-$ and by $a - 2\chi\pi$ on $\R_+$, and where $\psi$ is a conformal map from $\D$ to $\h$ sending $-i$ to $0$ (and say sending $i$ to $\infty$, although due to the specific choice of boundary conditions this is not actually necessary). 
Notice that the boundary conditions for $h$ on $\del \D$ are then continuous away from $-i$, and are indeed equal to $a$ and $a - 2\chi\pi$ at points infinitesimally close to $-i$ on its left and right sides respectively. When we draw figures, we will often replace $\D$ by a square to make it easier to denote boundary conditions. This can be safely ignored, and imagined to simply be a disk.

\section{Conformal loop ensembles and space-filling SLE}\label{sec:cle_sfsle}
In this section, we present constructions of $\CLE_\kk$ for $\kk \in (8/3, 4)$ and $\CLE_{\kk'}$ for $\kk' \in (4,8)$. 
We also define space-filling $\SLE_{\kk'}$ for $\kk' \in (4,8)$. 
We then prove two lemmas on the interaction between space-filling $\SLE_{\kk'}$ and $\CLE_{\kk'}$ (Lemma~\ref{lem:sfsle_pocket_interaction_48}), and between space-filling $\SLE_{\kk'}$ and $\CLE_{\kk}$ where $\kk' = 16/\kk$ (Lemma~\ref{lem:sfsle_pocket_interaction_834}).
This relationship will be the key to proving Theorem~\ref{thm:diam}.

\subsection{Conformal loop ensembles}\label{sec:cle}
In \cite{sw2012markovian}, a simple CLE in a simply connected domain $D \subsetneq \C$ is defined to be a locally finite, countable collection of simple, non-intersecting loops $\Gamma$ whose law, $P_D$, is conformally invariant and which satisfies a restriction property which we will now describe. By conformal invariance, we mean that if $D'$ is another simply connected domain, then $P_{D'}$ is equal to the image of $P_D$ under any conformal map from $D$ to $D'$.
This also implies that $P_D$ is invariant under conformal automorphisms.
The restriction property we require is as follows; if $D_1 \subset D$ and we define $D_1^*$ to be the subset of $D$ obtained after removing all loops which leave $D_1$ and their interiors, then the conditional law of the remaining loops, given the set of loops which leave $D_1$, is given by $P_{D_1^*}$. 
It is shown in \cite{sw2012markovian} that any simple CLE is given by a $\CLE_\kk$ for $\kk \in (8/3,4]$, which we construct below (for $\kk \neq 4)$.
A non-simple CLE in $D$ is similarly defined as a locally finite countable collection of (self-intersecting) loops in a domain where a.s.\ some loops do intersect the boundary. We refer to the conditions in \cite[Theorem~5.4]{s2009cle} for the full list of requirements. 
It was shown in \cite{s2009cle}, conditional on a conjecture on properties of $\SLE_{\kk'}(\kk'-6)$ curves which was later proved in \cite{ms2016imag1, ms2016imag3}, that any non-simple CLE is a $\CLE_{\kk'}$ for $\kk' \in (4,8)$. The loops in a non-simple CLE may intersect themselves and each other, but they do not cross in either case.

We will now describe the construction of conformal loop ensembles for $\kk \in (8/3, 8) \sm \{4\}$ using the \textit{continuum exploration tree} approach introduced in \cite{s2009cle}. The cases $\kk \in (8/3, 4)$ and $\kk' \in (4,8)$ are different, and will be explained independently of each other. We do not consider the $\kk = 4$ case in this paper so omit its construction.

\subsubsection{Construction of $\CLE_\kk$ for $\kk \in (8/3, 4)$}\label{subsec:cle834}
For $\kk \in (8/3, 4)$, the continuum exploration tree approach was first used in \cite{s2009cle} and revisited in \cite{werner_wu} and \cite{msw2017cleperc}. By conformal invariance, it will suffice to construct $\CLE_\kk$ on $\D$.
Let $(z_n)$ be the set of rational points (by which we mean a point with rational coordinates) in $\D$ and for each $n$ let $\eta_{z_n}$ be a radial $\SLE_\kk(\kk-6)$ process from $-i$ to $z_n$.
By the target invariance property of $\SLE_\kk(\kk-6)$ described in Section~\ref{subsubsec:gen_sle}, it is possible to couple the family $(\eta_{z_n})$ in such a way that for any two points $z_m, z_n$, the curves $\eta_{z_m}$ and $\eta_{z_n}$ are equal up until the time that $z_n$ and $z_m$ are separated by the curve, and such that the curves evolve independently after these times.
This construction provides us with a continuum branching tree structure which is called the continuum exploration tree.

Recall from Section~\ref{subsubsec:gen_sle} that by \cite{msw2017cleperc}, for a fixed point $z \in \D$, the process $\eta_{z}$ is a (self-intersecting but non-self-crossing) curve, and this curve consists of a ``trunk" which has the law of an $\SLE_{\kk'}(\kk'-6)$ (with force point at $0^+$), and a collection of excursion loops hanging off this trunk on its right side. 
It is explained in \cite{werner_wu} that, conditional on an initial segment of such an excursion loop, the remainder of the loop has the law of a standard $\SLE_\kk$ curve.
We refer to \cite{werner_wu} and \cite{msw2017cleperc} for a more detailed explanation.

We define the CLE loop $\ell_z$ to be the first excursion loop of $\eta_z$ which surrounds $z$. Note that this is not simply the first time that $\eta_z$ traces a path around $z$ (which could be caused by the trunk surrounding $z$); the loop in question must be one of the excursion loops of $\eta_z$ corresponding to an excursion of $W_t - O_t$ away from $0$. 
The loop $\ell_z$ exists almost surely, and we define our $\CLE_\kk$, which we refer to as $\Gamma$, to be the collection of loops $(\ell_{z_n})$. A.s.\ $\Gamma$ is a collection of simple, non-intersecting loops which do not intersect $\del \D$.

Next, we explain based on the work in \cite{msw2017cleperc} how this fits naturally into the imaginary geometry framework of \cite{ms2016imag1, ms2017ig4}. 
Let $h$ be a GFF on $\D$ with boundary conditions given by $\la'$ to the left of $-i$ and $\la'-2\chi\pi$ to the right of $-i$ (recall that this is explained in Section~\ref{subsec:gff}). 
Then (using \textit{boundary conformal loop ensembles}, which we will not describe here) it is shown in \cite{msw2017cleperc} that for $y \in \del \D$, the chordal $\SLE_\kk(\kk-6)$ from $-i$ to $y$, $\eta_y$, can be coupled with the GFF $h$ so that for any $\eta_y$-stopping time $\tau$, the set $\eta_y([0, \tau])$ is a local set for $h$ such that the boundary conditions for $h$ given $\eta_y([0, \tau])$ are as in Figure~\ref{fig:eta_y_bcs}. 
We say that $\eta_y$ is coupled with $h$ as a \textit{generalised flow line} ($\eta_y$ is coupled to $h$ in a very similar way to the flow and counterflow lines of \cite{ms2016imag1}, but $\eta_y$ is neither a flow nor counterflow line, but a different type of curve coupled with the GFF).
Furthermore, under this coupling, $\eta_y$ is a function of $h$.

\begin{figure}[t]
    \centering
    \includegraphics[scale=1]{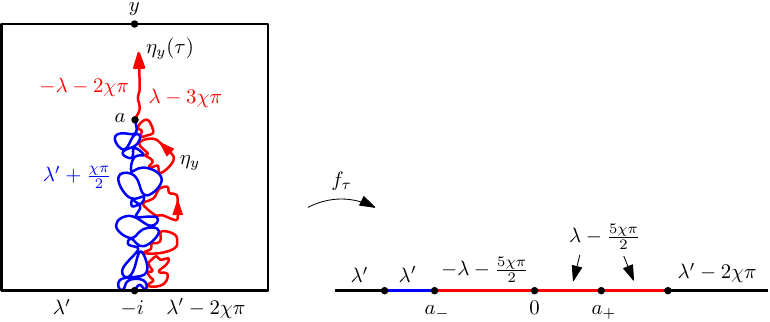}
    \caption{
    Let $h$ be a GFF on $\D$ (which we depict as a square to make it easier to denote boundary conditions) with boundary conditions given by $\la'$ to the left of $-i$ and $\la' - 2\chi\pi$ to the right. 
    Let $\eta_y$ be an $\SLE_\kk(\kk-6)$ from $-i$ to $y$ coupled with $h$ as described in Section~\ref{subsec:cle834}, and let $\tau$ be a stopping time for $\eta_y$ such that $W_\tau \neq O_\tau$ a.s. 
    The trunk (blue) and excursion loops (red) of $\eta_y$ are drawn in different colours to distinguish them.
    Then, by the arguments of \cite[Section~9]{msw2017cleperc}, $\eta_y([0,\tau])$ is a local set for $h$ and its boundary conditions can be described as follows.    
    Let $f_\tau$ be a conformal map from the component of $\D$ with $y$ on its boundary to $\h$ which sends $\eta_y(\tau)$ to $0$.
    Let $\aal$ be the most recent time before $\tau$ where $O_t = W_t$ and let $a = \eta_y(\aal)$. Then on the time interval $[\aal, \tau]$, $\eta_y$ traces part of an excursion loop, which is simple and intersects $\eta_y([0,\aal])$ only at $a$. 
    It follows that there are two prime ends corresponding to the point $a$, which we say are sent to $a_-, a_+$ on $\R$ so that $a_- < 0 < a_+$.
    Let $\fh_1$ be the harmonic function on $\h$ with boundary conditions given by $\la'$ on $(-\infty, a_-)$, $-\la - 5\chi\pi/2$ on $(a_-, 0)$, and $\la - 5\chi\pi/2 = \la' - 2\chi\pi$ on $(0, \infty)$. Then the boundary conditions for $h$ given $\eta_y([0, \tau_y])$ are given by $\fh = \fh_1 \circ f_\tau - \chi\arg(f_\tau')$.
    The boundary conditions in colour (red and blue) on the left hand figure correspond to what the boundary conditions of $\fh$ would be on a (hypothetical) vertical segment of the corresponding curve. This convention is used throughout the paper.}
    \label{fig:eta_y_bcs}
\end{figure}

If instead we have $z$ in the interior of $\D$, we can couple $\eta_z$ with $h$ using the following procedure, depicted in Figure~\ref{fig:eta_z_construction}.
Set $y_1 = i \in \del \D$ and let $\tau_1$ be the first time that $\eta_1$, the chordal $\SLE_\kk(\kk-6)$ curve from $-i$ to $y_1$ which is coupled to $h$ as a generalised flow line as described above, disconnects $0$ from $i$. 
Let $D_2$ be the connected component of $\D \sm \eta_1([0, \tau_1])$ containing $0$ and let $h_2$ be the restriction of $h$ to $D_2$. We can determine the boundary conditions on $\del D_2$ of $h_2$ given $\eta([0, \tau_1])$ explicitly as in Figure~\ref{fig:eta_y_bcs}. Indeed, if $\psi_2 \colon D_2 \to \h$ is a conformal map sending $\eta_y(\tau_1)$ to $0$ and $z$ to $i$, say, then $h_2$ has boundary conditions given by $\fh_2 = \wt\fh_2 \circ \psi_2 - \chi\arg(\psi_2')$ where $\wt\fh_2$ is equal to $\la'$ on $\R_-$ and $\la' - 2\chi\pi$ on $\R_+$. That is, the conditional boundary conditions of $h_2$ are exactly the same as those of $h$ (up to a transformation of imaginary surfaces).
This means that we can define an $\SLE_\kk(\kk-6)$ curve $\eta_2$, from $\eta_1(\tau_1)$ targeted at a point $y_2$ (chosen in an arbitrary measurable way) on $\del D_2$, which is coupled with $h_2$ as a generalised flow line.
Let $\tau_2$ be the first time $\eta_2$ disconnects $0$ from $y_2$.
We continue in this manner and construct a sequence of curves $(\eta_k([0, \tau_k]))_{k \geq 1}$. Appending this countable collection of curves together gives a radial $\SLE_\kk(\kk-6)$ process $\eta_z$ from $-i$ to $z$ which is a measurable function of $h$. 
For any stopping time $\tau$ for $\eta_z$ (where we have chosen some parameterisation for $\eta_z$), we have that $\eta_z([0, \tau])$ is a local set for $h$, and we can determine the corresponding boundary conditions using the known boundary conditions of the curves $(\eta_k)$. 
We can perform this process for all rational points $(z_n)$, and thus determine a collection of curves $(\eta_{z_n})$, which are coupled in such a way that $\eta_{z_m} = \eta_{z_n}$ (up to parametrisation) up until the point that $\eta_{z_m}$ separates $z_m$ from $z_n$, and are independent thereafter.
That is, this coupling of the curves with $h$ leads exactly to the situation of the continuum exploration tree described above. 
Furthermore, these curves are all measurable with respect to $h$, meaning that we can view our $\CLE_\kk$, $\Gamma$, as being coupled with the GFF $h$ such that $\Gamma$ is a measurable function of $h$.

\begin{figure}[t]
    \centering
    \includegraphics[scale=1]{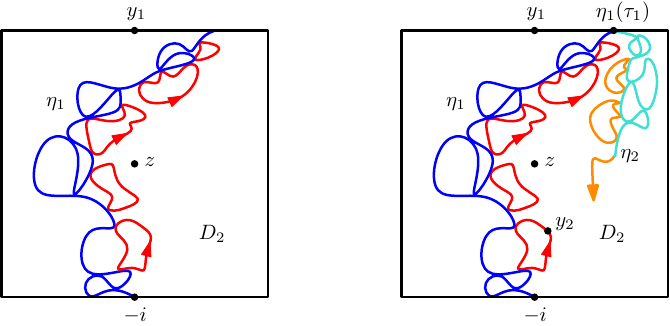}
    \caption{
    Here we show the first two steps in the construction of $\eta_z$, a radial $\slekk$ in $\D$ from $-i$ to $z$.
    $\eta_1$ is a chordal $\slekk$ from $-i$ to $y_1$ stopped when it first disconnects $z$ from $y_1$, at time $\tau_1$. 
    $\eta_2$ is a chordal $\slekk$ from $\eta_1(\tau_1)$ to $y_2$ in $D_2$, stopped when it first disconnects $z$ from $y_2$.
    We define $\eta_k$ for $k \in \N$ inductively in this fashion.
    As explained in Section~\ref{subsec:cle834}, the curves $\eta_k$ can be coupled with the GFF $h$.
    In particular, $\eta_k$ is coupled to the restriction of $h$ to $D_k$ as a generalised flow line.}
    \label{fig:eta_z_construction}
\end{figure}

\subsubsection{Construction of $\CLE_{\kk'}$ for $\kk' \in (4,8)$}\label{subsec:cle48}
Fix $\kk' \in (4,8)$. We will describe the construction of $\CLE_{\kk'}$ again using the continuum exploration tree, as in \cite{s2009cle}. As before, we may assume the domain we are working on is $\D$.
For each $z \in \D$, let $\eta'_y$ be a radial $\sleu$ from $-i$ to $z$ with force point at $-ie^{i0^+}$ (as explained in Section~\ref{subsec:cle48}). For $y, z \in \D$ distinct, the curves $\eta'_y$ and $\eta'_z$ have the same law up until the time that $y$ is disconnected from $z$ \cite[Proposition~3.14]{s2009cle}.
Let $(z_n)$ be a countable dense set of points in $\D$, and consider the curves $(\eta'_n)$ coupled together such that $\eta'_n$ and $\eta'_m$ agree up until the time that $z_n$ and $z_m$ are disconnected. 
One way to create this coupling is to let $h$ be a GFF on $\D$ with boundary conditions equal to $\la'$ to the left of $-i$ and $\la' - 2\chi\pi$ to the right of $-i$. Then if we let $\eta'_n$ be the radial counterflow line of $h$ from $-i$ to $z_n$ (as defined in \cite{ms2017ig4}), the curves $\eta'_n$ will naturally be coupled as above.

Next, we explain how to construct (non-nested) $\CLE_{\kk'}$ from the continuum exploration tree, which we continue to view as a countable collection of flow lines of $h$. $\clekp$ a.s.\ consists of a countable number of loops, each of which contains some point $z_n$, so it suffices to construct the loop $\CL_z$ surrounding a fixed point $z \in \D$.
Our description is based on those in \cite[Section~4]{s2009cle} and \cite[Section~2.2]{msw2014gasket}.
Each curve $\eta_z'$ is a radial $\SLE_{\kk'}(\kk'-6)$ curve, and has associated to it processes $W^z, O^z$ and $\tht^z$ as introduced in Section~\ref{subsubsec:radial_sle} (these processes are obtained by conformally mapping $z$ to $0$ using a map $\ff \colon \D \to \D$ and then considering the curve $\ff(\eta_z)$).
Note that $\tht^z_0 = 2\pi$.
Let $\tau_z = \inf\{t \geq 0 \colon \tht_z(t) = 0\}$; this is the first time that $\eta'_z$ makes a clockwise loop around $z$, with $\tau_z = \infty$ if this does not occur. If $\tau_z = \infty$, then $z$ is not surrounded by any such loop, or equivalently $z \in \Uu$, so $\CL_z$ is not defined. This occurs with probability $0$ for a fixed $z$, and we will now work on the event that $\tau_z < \infty$.
Let $\tau'_z = \sup\{t < \tau_z \colon \tht_z(t) = 2\pi\}$. Let $\gg_z$ be the curve $\eta'_z([\tau'_z, \tau_z])$. For any other point $w \in \D$, we can similarly define $\gg_w$, and it is then the case that either $\gg_w$ contains $\gg_z$ as a sub-arc, $\gg_z$ contains $\gg_w$ as a sub-arc, or neither of these properties hold. We define $\CL_z$ to be the union of $\gg_{z_n}$ for those $n$ where $\gg_z$ is a sub-arc of $\gg_{z_n}$.
In this paper, we will not be interested precisely in the loop $\ell_z$ surrounding a point $z$, but instead in the connected component, or \emph{pocket}, of $\D \sm \Uu$ containing $z$.
Note that to merely construct the pocket containing $z$ using the above process is simpler, since it is determined only by $\gg_z$ itself and we do not have to worry about how to complete the loop.

We conclude this section by proving that the $\CLE_{\kk'}$ gasket $\Uu$ is a.s.\ a Sierpi\'nski carpet for any $\kk' \in (4,8)$.
Note that the terminology $\CLE_{\kk'}$ gasket is used in reference to the fact that the $\CLE_{\kk'}$ \textit{loops} a.s.\ intersect themselves and each other.
If we instead consider the connected components, or pockets, of $\D\sm\Uu$ however, we will see that these are a.s.\ Jordan domains whose boundaries do not intersect each other.
We remark that this proposition is not needed for the proof of Theorem~\ref{thm:diam}, but is necessary for the application of Ntalampekos's uniformization result in \cite{ntalampekos_conformal_uniformization}.

\begin{proposition}\label{prop:gasket_is_carpet}
For $\kk' \in (4,8)$, the $\CLE_{\kk'}$ gasket $\Uu$ is a.s.\ a Sierpi\'nski carpet.
\end{proposition}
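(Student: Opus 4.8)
The plan is to verify the three defining properties of a (topological) Sierpi\'nski carpet for $\Uu$ viewed as a subset of $\wh\C$: namely that $\Uu$ is closed with empty interior, that it can be written as $\wh\C \sm \bigcup_k s_k$ where the $s_k$ are Jordan domains with pairwise disjoint closures, and that $\diam(s_k) \to 0$ in the spherical metric. The first property is essentially already recorded in the introduction: $\Uu$ has empty interior a.s., and since $\Uu = \D \sm \bigcup_k \interior(\ell_k)$ (with each $\interior(\ell_k)$ open) together with $\del\D$, it is closed in $\wh\C$. So the work is concentrated in the second and third properties, and the main obstacle is the second one, since the pockets $s_k$ of $\D\sm\Uu$ do not coincide with the loop interiors when $\kk' \in (4,8)$.

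First I would establish that each pocket $s_k$ is a Jordan domain. Using the continuum exploration tree construction from Section~\ref{subsec:cle48}, each pocket containing a point $z$ is determined by the excursion $\gg_z = \eta'_z([\tau'_z,\tau_z])$ of the $\sleu$ curve between consecutive visits of $\tht^z$ to $2\pi$, ending with the clockwise loop closing at $\tau_z$. The key input is the known boundary behaviour of $\SLE_{\kk'}(\kk'-6)$ for $\kk' \in (4,8)$: by \cite{msw2014gasket} (and the general SLE theory of \cite{ms2016imag1, ms2016imag3}), such a curve is a continuous non-self-crossing curve, and one shows that the region it cuts off on the relevant side between these times is a Jordan domain --- the boundary is a simple loop because the curve segment $\gg_z$, although it touches itself and the earlier part of the trace, bounds a region whose frontier is a simple curve (the outer boundary of the $\SLE_{\kk'}$-type trace). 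This is the standard fact that complementary components of $\SLE_{\kk'}$ and of $\CLE_{\kk'}$ loops are Jordan domains; I would cite it from \cite{ms2016imag3} or \cite{msw2014gasket} rather than reprove it. Pairwise disjointness of the closures $\ov{s_j}, \ov{s_k}$ for $j \neq k$ then follows because two distinct pockets are separated by the exploration tree (a branch of $\eta'$ lies strictly between them), so their closures cannot share a point --- here one uses that the exploration curve visits a neighbourhood of any shared boundary point, pushing the two pockets apart, or equivalently that a point on $\del s_j \cap \del s_k$ would have to be a point of $\Uu$ from which one could access two distinct pockets, contradicting the local structure of the tree near such a point.

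Next I would handle the diameter decay $\diam(s_k) \to 0$. Enumerate the pockets as $(s_k)$; I must show that for every $\eps > 0$ only finitely many pockets have spherical diameter exceeding $\eps$. Since $\D$ is bounded this is equivalent to Euclidean diameters. One clean route: for a fixed $\eps$, cover $\D$ by finitely many balls of radius $\eps/3$; any pocket of diameter $> \eps$ must contain the center region of some such ball in the sense of surrounding points that are $\eps/3$-separated, hence has area bounded below --- but this area lower bound is exactly the kind of estimate that does not hold uniformly, so instead I would argue via local finiteness of the CLE: the $\CLE_{\kk'}$ is a \emph{locally finite} loop ensemble, so for any $\eps$ only finitely many loops have diameter $\geq \eps$; and although a single loop can correspond to many pockets, the pockets of a given loop $\ell_k$ are themselves the complementary Jordan domains of $\interior(\ell_k)$, whose diameters tend to $0$ (this is again a property of the $\SLE_{\kk'}$-type loop: its complementary components inside have diameters going to zero, a.s.). Combining: given $\eps$, there are finitely many loops of diameter $\geq \eps$; each contributes at most finitely many pockets of diameter $\geq \eps$; and the remaining loops have diameter $< \eps$ so all their pockets do too. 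Hence only finitely many pockets exceed $\eps$ in diameter, giving $\diam(s_k) \to 0$.

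\textbf{Main obstacle.} The delicate point is the precise topology of the pockets in the $\kk' \in (4,8)$ regime --- both the Jordan-domain property and, crucially, the disjointness of pocket closures. In this regime CLE loops touch themselves and each other and $\del\D$, so one cannot argue as in the simple case; one must extract from the exploration-tree construction (and the imaginary-geometry description of $\SLE_{\kk'}(\kk'-6)$ via \cite{ms2016imag3, msw2014gasket}) that even though the loops are non-simple, the pockets --- being the complementary components of the full carpet, which is ``cut out'' by an $\SLE_{\kk'}$-type trace --- have simple-loop boundaries and that no two pocket closures meet. I would expect this to require care in describing which excursion of $\tht^z$ produces the pocket of $z$ and in showing that the frontier of such a region is a simple curve, invoking the boundary regularity and the fact that $\SLE_{\kk'}$ for $\kk' \in (4,8)$ is not space-filling so its complementary components are genuine Jordan domains.
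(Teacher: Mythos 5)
The central gap is in your treatment of the disjointness of the pocket closures, which you yourself flag as the main obstacle and then essentially dispatch with a heuristic: ``a branch of $\eta'$ lies strictly between them,'' and ``a point on $\del s_j \cap \del s_k$ would have to be a point of $\Uu$ from which one could access two distinct pockets, contradicting the local structure of the tree near such a point.'' This does not constitute an argument in the $\kk' \in (4,8)$ regime, precisely because the exploration curve touches itself and $\del\D$, so the presence of a curve arc ``between'' two pockets is compatible with their closures meeting at a pinch point. What one actually needs is a \emph{probabilistic} statement that two pockets sharing a boundary point is a null event, and the paper gets this by importing Lemma~2.5 of \cite{gp2020adj} (no two same-side complementary components of a chordal $\SLE_{\kk'}$ touch), transferring it to $\SLE_{\kk'}(\kk'-6)$ via the flow-line boundary/pocket decomposition (Step~1), proving separately that same-side components avoid the opposite domain boundary (Step~2, using the law of the right boundary as an $\SLE_\kk(-\kk/2;\kk/2-2)$ and the non-intersection results of \cite{mw2017intersections}), and then running a careful case analysis depending on whether $w$ and $z$ are first separated by a clockwise or counterclockwise loop of the exploration tree (Steps~3--4), reducing at each stage to one of these inputs. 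Your sketch contains none of this structure.

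Two further, smaller issues. First, you propose to cite the Jordan-domain property of the pockets as ``the standard fact that complementary components of $\SLE_{\kk'}$ and of $\CLE_{\kk'}$ loops are Jordan domains'' from \cite{ms2016imag3} or \cite{msw2014gasket}. The paper does not treat this as citable in the form needed: its Step~5 is a genuine argument that decomposes $\del P_z$ into two simple flow lines of $h$ (of angle $-\pi/2$) using interior flow-line duality \cite[Theorem~1.13]{ms2017ig4}, exploiting that the relevant segment of $\eta_u^R$ is simple before it winds around $z$. You should verify that the statement you want is actually available in the literature before asserting it is standard. Second, for the diameter decay you invoke local finiteness plus the sub-claim that ``the complementary components of a single loop have diameters going to zero''; this sub-claim is not a purely topological consequence of continuity and would itself need a reference or proof. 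The paper avoids this entirely by deriving $\diam(s_k)\to 0$ from its Theorem~\ref{thm:diam} (square-summability implies the diameters tend to zero), noting that Theorem~\ref{thm:diam} does not depend on the present proposition, so there is no circularity. Your alternative route to the diameter statement is workable in spirit but would require filling in that sub-claim.
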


\begin{proof}
\textit{Setup.}
Recall that a Sierpi\'nski carpet is a closed subset of $\wh\C$ with empty interior and whose complement consists of countably many Jordan domains whose boundaries do not intersect and whose diameters shrink to $0$ in the spherical metric.
That $\Uu$ has empty interior follows from the fact that fixed points are a.s.\ not in the set and the fact that the diameters shrink to $0$ follows from our main result, Theorem~\ref{thm:diam} (which does not rely on this lemma).
It remains to prove that a.s.\ the boundaries of two distinct complementary components do not intersect, which we will do in Steps 1 to 4, and that the complementary components are Jordan domains, which we will do in Step 5.
Our main input in showing that the boundaries of distinct components do not intersect is Lemma~2.5 of \cite{gp2020adj}, which states that if $\eta'$ is a chordal $\SLE_{\kk'}$ (with no force points) no two connected components of $\D \sm \eta'$ which lie on the same side of $\eta'$ intersect each other a.s., although we will have to work through some other cases before applying it to complete the proof.
First we convert the above result to $\SLE_{\kk'}(\kk' - 6)$ and then we show how this implies our result for the $\CLE_{\kk'}$ gasket.

\textit{Step 1. Translating a result for $\SLE_{\kk'}$ to $\sleu$.} 
Let $\eta'_1$ be a $\SLE_{\kk'}(\rho_L;\rho_R)$ curve on $\D$ from $-i$ to $i$, with $\rho_L, \rho_R \geq \kk'/2 - 4$ (we refer to \cite{ms2016imag1} for the definition of SLE with multiple force points).
Let $\eta_L$ and $\eta_R$ be the left and right boundaries of $\eta'_1$. By \cite{ms2016imag1}, one can show that in each component lying between $\eta_L$ and $\eta_R$, $\eta'_1$ has the law of an $\SLE_{\kk'}(\kk'/2 - 4, \kk'/2 - 4)$ from the opening point of the component to the closing point. 
Here, we view $\eta_L, \eta_R$ as running from $-i$ to $i$ and define the opening (resp.\ closing) point of a component lying between them to be the first time (resp.\ second, or equivalently last) time they intersect on the boundary of this component---note that $\eta_L$ and $\eta_R$ are a.s.\ simple.
It follows that if there is a positive probability that two components on the same side of an $\SLE_{\kk'}(\kk'/2 - 4, \kk'/2 - 4)$ intersect each other, then there is a positive probability that two components on the same side of $\eta'_1$ intersect each other.
Furthermore, no complementary components of $\eta'_1$ which lie in different components $U,V$ lying between $\eta_L$ and $\eta_R$ can intersect each other. Indeed this follows since any two such components $U$ and $V$ themselves do not intersect each other. 
If this were the case, then (assuming without loss of generality that $U$ is hit by $\eta_L$ and $\eta_R$ first) the closing point of $U$ would have to be the opening point $V$ (since $\eta_L$ and $\eta_R$ are simple curves).
But after $\eta_L$ and $\eta_R$ intersect at $U$, they will a.s.\ intersect again in any arbitrarily small neighbourhood of this closing point, meaning that this cannot be the opening point of a different pocket.
Therefore we can conclude that there is a positive probability that two components on the same side of $\eta'_1$ intersect if and only if there is a positive probability that two components of the same side of an $\SLE_{\kk'}(\kk'/2 - 4, \kk'/2 - 4)$ intersect each other.
Since we know the former event has probability $0$ for $\rho_L = \rho_R = 0$, it follows that it has probability $0$ for $\rho_L = \rho_R = \kk'/2 - 4$ and hence for all $\rho_L, \rho_R \geq \kk'/2 - 4$. In particular it has probability $0$ for an $\SLE_{\kk'}(\kk'-6)$.

\textit{Step 2. Components to the right of a chordal $\sleu$ do not intersect the left boundary of a domain.}
Before moving on to $\CLE_{\kk'}$, we will first prove the following. Let $\eta'$ be a chordal $\sleu$ on $\D$ from $-i$ to $i$. Then any the boundary of any component of $\D \sm \eta'$ which lies to the right of $\eta'$ a.s.\ does not intersect $\del^L \D$, the clockwise arc of $\del \D$ from $-i$ to $i$. Define $\del^R \D$ analogously.
To prove this result, let $h$ be a GFF on $\D$ with boundary conditions $\la'$ to the left of $-i$ and $\la' - 2\pi\chi$ to its right.
Then by the flow line duality  results of \cite[Theorem~1.4]{ms2016imag1} the left and right boundaries of $\eta'$, denoted by $\eta_L$ and $\eta_R$, are flow lines of angle $-3\pi/2$ and $-\pi/2$ respectively. 
One can then compute that $\eta_R$ has the law of an $\SLE_\kk(-\kk/2; \kk/2 - 2)$ from $i$ to $-i$ and hence will a.s.\ intersect $\del^R \D$ but not $\del^L \D$ \cite[Lemma~2.1]{mw2017intersections}.

Suppose that $P$ is any component of $\D \sm \eta'$ which is on the right side of $\eta'$. This includes components to the right of $\eta_R$ and also components which are disconnected from $i$ when $\eta'$ makes a clockwise loop.
The case that $P$ is to the right of $\eta_R$ is not relevant to us, but the result follows from the fact that $\eta_R$ a.s.\ intersects $\del^R \D$ arbitrarily close to both $-i$ and $i$.
Suppose then that $P$ is a complementary component of $\eta'$ which is separated from $i$ by $\eta'$ making a clockwise loop. Then $P$ will lie between $\eta_L$ and $\eta_R$.
By duality for interior flow lines (\cite[Theorem~1.13]{ms2017ig4}) and by taking a sequence of rational points approaching the point where this clockwise loop is completed, one can show that $\del P$ is contained in the union of flow lines of angle $-\pi/2$ of $h$ started from rational points in $\D$ which lie between $\eta_L$ and $\eta_R$.

Let $z \in \D$ be a fixed point and suppose we are on the event that $z$ lies between $\eta_L$ and $\eta_R$. Let $\eta_z$ be the flow line of $h$ started from $z$ (and targeted at $-i$) of angle $-\pi/2$.
We will show that $\eta_z$ cannot intersect $\del^L \D$.
Since $\eta_z$ cannot cross $\eta_L$, in order for this to occur, $\eta_z$ must intersect $\del^L \D$ at a point where $\eta_L$ intersects $\del^L \D$. 
This cannot occur since in order for $\eta_z$ to intersect $\eta_L$ it must do so with a height gap of at least $\pi$, but one can compute that at this height $\eta_z$ cannot intersect $\del^L \D$ (we refer to \cite{ms2017ig4} for an explanation of the height gap, and in particular to Theorem~1.7 of that paper for an explanation of the flow line interaction rules).
It follows that $\eta_z$ a.s.\ does not intersect $\del^L \D$, and hence that $\del P$ does not intersect $\del^L \D$ a.s.

\textit{Step 3. Applying these results to $\CLE_{\kk'}$ to prove the lemma: the case of counterclockwise loops.}
Suppose $w, z \in \D$ are fixed. We will show that a.s.\ the boundaries of $P_w$ and $P_z$ do not intersect. We break the proof into multiple cases depending on the behaviour of the curves $\eta'_w$ and $\eta'_z$. For legibility, we will abbreviate clockwise by CW and counterclockwise by ACW (for anticlockwise, since CW and CCW are too similar).

First, suppose that $w$ and $z$ are first separated when $\eta'_w$ makes an ACW loop $\CL$ (which may include part of $\del \D$) around $w$. Let $Q_w$ be the complementary component of $\D \sm \CL$ containing $w$.
Note that $P_z$ will necessarily lie outside $\CL$. 
We will now show that $\del P_w$ does not intersect $\del Q_w$, and hence does not intersect the loop $\CL$.
The remainder of $\eta'_w$ has the law of an $\sleu$ inside $Q_w$ from $x$ to $w$, where $x$ is the closing point of loop $\CL$ (this is the point at which the loop is completed; note that this point must be on $\del Q_w$ by definition).
Suppose first that $\eta'_w$ makes a CW loop around $w$ before making a further ACW loop around it.
Then, it follows that there is some point $y \in \del Q_w$ (chosen from some fixed countable dense subset of $\del Q_w$) such $\eta'_w$ agrees with the chordal $\sleu$ in $Q_w$ from $x$ to $y$, which we denote by $\eta'_y$, up until the point $\eta_y'$ draws this CW loop. 
Since $P_w$, the component containing $w$, is on the right side of $\eta_y$, its boundary does not intersect the CW arc of of $\del Q_w$ from its opening point to $y$ by Step 2 (this arc corresponds to $\del^L \D$ above).
Also, $\del P_w$ could intersect the ACW arc from $x$ to $y$ only if $\eta_y$ intersect this arc with $w$ on its right side, thus separating $w$ and $y$ at this point. 
By our choice of $y$, this could only occur if the closing point of $P_w$ itself was part of $\del Q_w$, 
but this a.s.\ does not occur. 
One way to see this is to show that whenever $\eta'_y$ travels some distance $\dd$ from this ACW arc of $\del Q_w$, it a.s.\ does not first hit this arc again at the most recent point at which $\eta'_y$ has intersected this arc. This follows from the fact that $\eta'_y$ fills neither $\del Q_w$ nor its own right boundary.
In conclusion, $\del P_w$ does not intersect $\del Q_w$ in this case.

Suppose now instead that $\eta'_w$ does make a further ACW loop around $z$ before making a CW one.
A.s. $\eta'_w$ makes only a finite number of ACW loops around $w$ before making a CW one, so if we let $Q'_w$ be the complementary component of this final ACW loop which contains $w$, then we can repeat the above argument to show that $\del P_w$ does not hit $\del Q_w'$ and hence does not intersect $\del Q_w$ either.
This ensures that $\del P_w$ does not intersect $\del Q_w$ and hence will not intersect $\del P_z$.

Note also that the case that $w$ and $z$ are first separated by $\eta'_w$ making a CW loop around $w$ which includes part of the boundary can equivalently be viewed as $\eta'_z$ making an \textit{ACW} loop around $z$ which includes a different part of the boundary, so this case is also included here. We may also reverse the roles of $w$ and $z$ in each case.

\textit{Step 4. The case where $w$ and $z$ are not separated by a counterclockwise loop.} 
The only remaining case is that $w$ and $z$ are separated when $\eta'_w$ makes a CW loop around $z$ which does not include part of the boundary (we choose this point to be $z$ instead of $w$ without loss of generality).
It is possible that $\eta'_w$ has made multiple ACW loops around both $w$ and $z$ before this time, but it can only make a finite number of such loops a.s. 
In that case, each time an ACW loop is made, the law of the $\CLE_{\kk'}$ on $\D$ inside the component of the loop containing $w$ and $z$ is exactly that of a $\CLE_{\kk'}$ on this component, so by considering the component $Q_w'$ of the last such ACW loop which contains $z$, as above, we can always reduce to the case that no such ACW loops are made.
That is, we have reduced to the case that $z$ and $w$ are first separated when a CW loop is first drawn around $z$, and that before this time $z$ and $w$ have not been disconnected from $\del \D$ by $\eta'_w$ making an ACW loop around them.
If $\eta'_w$ now goes on to draw an ACW loop $\CL$ around $w$, we can argue as above that $\del P_w$ does not intersect the boundary of the complementary component of $\CL$ containing $w$ (which is a different component to that containing $z$) and hence does not intersect $\del P_z$.
Finally, we can now assume that $\eta'_w$ first traces a CW loop around $z$, and then a CW loop around $w$ without any ACW loops being drawn around either point beforehand. 
In this case, there exists a point $q \in \del \D$ in some deterministic fixed countable dense subset of $\del \D$ such that $\eta'_w$ agrees with $\eta'_q$ up until the CW loop around $w$ is drawn. 
But $\eta'_q$ is a chordal $\sleu$, and the pockets $P_z$ and $P_w$ are both on the right side of this curve (by virtue of them being cut out by CW loops) so by Lemma~2.5 of \cite{gp2020adj}, their boundaries do not intersect.

\textit{Step 5. The complementary components are Jordan domains.}
To complete the proof it remains to show that each complentary component of $\Uu$ is a.s.\ a Jordan domain. For fixed $z \in \D$, we will show that this is the case for $P_z$ a.s.
In Step 3, we showed that if $z$ is first surrounded by an ACW loop, then $\del P_z$ a.s.\ does not intersect this loop, so by considering the complementary component $Q_z'$ of the final ACW loop of $\eta'_z$ surrounding $z$, we may reduce to the case that $\eta'_z$ makes no ACW loops around $z$.
Recall that $\tau_z$ is defined as the first time at which $\eta'_z$ makes a CW loop around $z$ and let $\ss_z = \inf\{t \geq 0 \colon \eta'_z(t) = \eta'_z(\tau_z)\}$.
Let $u \in \D$ be any point with rational coordinates. Let $\bb_u$ be the first time $u$ is separated from $z$ by $\eta'_z$.
A.s.\ there exists such a $u$ for which $\bb_u \in (\ss_z, \tau_z)$ and for which $u$ is disconnected from $z$ by $\eta'_z$ making a CW loop around $u$. In this case $\tau_u = \bb_u$ and we can define $\ss_u$ analogously to $\ss_z$. 
There further exists such a $u$ for which $\ss_u \in (\ss_z, \tau_z$); this is simply because of the fact that $\eta'_z$ will always draw arbitrarily small CW and ACW loops as it travels through $\D$. 
For the remainder of the proof we work on the event that the point $u$ satisfies these requirements.

See Figure~\ref{fig:pocket_is_jordan} for a depiction of the following. Let $\eta_u^R$ be the flow line of $h$ of angle $-\pi/2$ started from $u$. By interior duality \cite[Theorem~1.13]{ms2017ig4}, $\eta_u^R$ is a.s.\ the right boundary of $\eta'_u$, the counterflow line of $h$ targeted at $u$. 
The curve $\eta_u^R$ is a.s.\ not simple, but it can only intersect itself after it has wrapped around $z$ \cite{ms2017ig4}. Since $\eta_u^R$ is the right boundary of $\eta'_u$, at any self-intersection point of $\eta_u^R$ we must have that $\eta'_u$ hits this point (at least) twice, and that $\eta'_u$ must make a loop around $u$ between these two times. 
It follows that after $\eta'_u$ has made such a loop, $\eta_u^R$ (and hence also $\eta'_z$) has separated $u$ from $z$ (since we have assumed that the first loop made around $u$ does not also contain $z$).
The upshot of this is that the segment of $\eta^R_u$ from the last time it hits $\eta'_z(\tau_u)$, call this time $\ss$, until it terminates at $0$ is a simple curve.

\begin{figure}[t]
    \centering
    \includegraphics[scale=1]{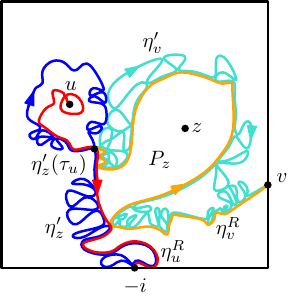}
    \caption{We depict here the setup in Step 5 of the proof of Proposition~\ref{prop:gasket_is_carpet}. $P_z$ is the pocket of $\Uu$ containing $z$ and is seen to lie between the two simple curves $\eta_u^R([\ss, \infty))$ and $\eta_v^R$, ensuring that $P_z$ is a.s.\ a Jordan domain.}
    \label{fig:pocket_is_jordan}
\end{figure}

Note that $\tau_u$ is a stopping time for $\eta'_z$. By assumption, $\eta'_z$ makes a CW loop around $z$ before an ACW one, so it follows that there a.s.\ exists a point $v \in \del \D$ such that $\eta'_z$ and $\eta'_v$ agree until this CW loop is made. 
Consider now the remainder of the curve $\eta'_v$ after the time $\tau_z$ (where the parameterisations of $\eta_u'$ and $\eta_v'$ agree). 
It is the counterflow line of the conditional field $h$ given $\eta'_v([0, \tau_u])$ from $\eta_v'(\tau_u)$ to $v$. As such, using the chordal flow line duality of \cite{ms2016imag1}, the right boundary $\eta_v^R$ of the remainder of $\eta'_v$ is the flow line of this conditional field of angle $-\pi/2$ from $v$ to $\eta'_v(\tau_u)$ and is therefore a simple curve. 
Furthermore, by considering the law of this curve (which we can identify using the boundary conditions of the conditional field), we see that it a.s.\ does not intersect the left boundary of $\eta'_z([0, \tau_u])$. 
Since we have assumed that $\eta'_v$ makes a CW loop around $z$, it must be the case that $\del P_z$ is contained in the union of $\eta_u([\ss, \infty))$ and $\eta_v^R$, which are two simple curves which intersect only by $\eta_v^R$ hitting the left side of $\eta_u([\ss, \infty))$ (where we view $\eta_u$ as travelling in the direction from $u$ to $-i$). 
The pocket $P_z$ will then be a region between these two curves, as shown in Figure~\ref{fig:pocket_is_jordan}, ensuring that $\del P_z$ is a simple curve and that $P_z$ is a Jordan domain a.s.
\end{proof}

\subsection{Space-filling SLE}\label{sec:sfsle}
Space-filling variants of SLE were introduced and their relationship to CLE was explained in \cite{ms2017ig4}. Here we will describe the space-filling $\slekp$ curves which will be used in this paper. Although this is not the formal definition, it is helpful to think of a space-filling $\slekp(\rho)$ as a curve which behaves like a (standard, non--space-filling) $\slekp(\rho)$, but each time that the latter disconnects a region from its target point, the space-filling variant instead fills up this region before continuing on its path.

To define space-filling SLE, we will use the \emph{space-filling ordering}, introduced in \cite{ms2017ig4}. We will describe only the case we need: that of a clockwise space-filling $\slekp(\kk' - 6)$ loop in $\D$ started from $-i$ (with force point at $-i^+$).
Let $h$ be a GFF on $\D$ with boundary conditions $\la'$ to the left of $-i$ and $\la' - 2\chi\pi$ to the right of $-i$.
For each $z \in \D$, let $\eta^+_z$ and $\eta^-_z$ be the (interior) flow lines of $h$ started from $z$ of angle $\pi/2$ and $-\pi/2$, respectively, as defined in \cite{ms2017ig4}. In order for this to make sense, we need to specify how these flow lines bounce off the boundary; we require that each flow line always bounces in a clockwise direction or terminates if this is not possible.
Then, we say that $z_m$ comes before $z_n$ in the ordering, and write $z_m \prec z_n$, if $z_m$ lies in a connected component of $\D \sm (\eta_n^- \cup \eta_n^+)$ whose boundary consists of part of the right side of $\eta_n^-$ and/or part of the left side of $\eta_n^+$, and possibly also part of $\del \D$.

It is proved in \cite{ms2017ig4} (see also \cite{msw2017cleperc} for the space-filling loop case) that for a fixed countable dense set of points $(z_n)$ in $\D$, there a.s.\ exists a unique space-filling loop $\eta'$ starting and ending at $-i$ which hits the points $(z_n)$ in the order specified by the space-filling ordering, which is continuous when parameterised by area, and for which the set of times $\eta\nv(z_n)$ is dense in $[0,\infty)$.
The law of $\eta'$ does not depend on the choice of $(z_n)$. We define the clockwise space-filling $\slekp(\kk'-6)$ loop starting at $-i$ to be this curve $\eta'$.
If we fix a point $z \in \del\D$, and consider $\eta'$ parameterised by half-plane capacity viewed from $z$, the curve $\eta'_z$ we obtain is the counterflow line of $h$ from $-i$ to $z$ and has the law of an $\slekp(\kk-6)$ targeted at $z$.
Due to its parameterisation, once $\eta'_z$ disconnects a region from $z$, it fills it in instantaneously, as opposed to $\eta'$, which fills it in gradually.
That is, up to reparameterisation, $\eta'_z$ corresponds to the curve $\eta'$ restricted to the times when the tip of $\eta'$ is exposed to $z$. This justifies calling referring to this curve as space-filling $\slekp(\kk-6)$, and explains the heuristic view of the space-filling curve mentioned above.

In Lemmas~\ref{lem:sfsle_pocket_interaction_48} and~\ref{lem:sfsle_pocket_interaction_834} we prove that if $\eta'$ is a space-filling $\sleu$ loop starting from $-i$ coupled with a GFF $h$ as above, and if $\GG$ is either a $\clekp$ coupled to the same GFF as defined in Section~\ref{subsec:cle48}, or a $\CLE_\kk$ coupled to $h$ as defined in Section~\ref{subsec:cle834}, with $\kk' = 16/\kk$, then $\eta'$ fills in the connected component (or pocket) of $\D\sm\Uu$ containing $z$ in one go.
That is, as soon as $\eta'$ enters this pocket, it hits every point in the pocket before hitting any point outside the closure of the pocket. This is the key fact used to prove Theorem~\ref{thm:diam}.
Note that the boundary conditions of $h$ needed for all three constructions are same; they are given by $\la'$ to the left of $-i$ and $\la' - 2\chi\pi$ to the right of $-i$.

\begin{lemma}\label{lem:sfsle_pocket_interaction_48}
Let $\eta'$ be a clockwise space-filling $\sleu$ loop in $\D$ starting from $-i$ coupled with a GFF $h$ as above, and let $\GG$ be a $\clekp$ coupled to the same GFF in the manner described in Section~\ref{subsec:cle48}. 
For a fixed point $z \in \D$, let $s_z$ be the connected component (or pocket) of $\D\sm\Uu$ containing $z$. 
Then $\eta'$ a.s.\ fills in $s_z$ one go; that is, as soon as $\eta'$ enters $s_z$, it hits every point in $s_z$ before hitting any point outside $\ov{s_z}$.
\end{lemma}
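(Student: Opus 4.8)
The plan is to exploit the fact that, in the coupling with the GFF $h$, both the $\CLE_{\kk'}$ loop $\CL_z$ (hence the pocket $s_z$) and the space-filling curve $\eta'$ are described in terms of the \emph{same} family of interior flow lines $\eta^\pm_w$ of angle $\pm\pi/2$. The pocket $s_z$ is, up to its boundary, a connected component of $\D\sm\Uu$, and by the construction in Section~\ref{subsec:cle48}, its boundary is built from the arc $\gg_z = \eta'_z([\tau'_z,\tau_z])$ together with the matching arcs $\gg_w$ for nearby rational $w$; via interior flow line duality (\cite[Theorem~1.13]{ms2017ig4}) these can in turn be expressed in terms of the angle $\pm\pi/2$ flow lines from points on $\del s_z$. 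On the other hand, by the definition of the space-filling ordering, $\eta'$ fills $s_z$ ``in one go'' precisely if the following holds: whenever $z_m \in s_z$ and $z_n \notin \ov{s_z}$, the two points are not $\prec$-separated by a ``bad'' crossing, i.e. the flow lines $\eta^\pm_{z_n}$ (resp.\ $\eta^\pm_{z_m}$) do not cut through $s_z$. So the lemma reduces to the statement: \emph{no angle $\pm\pi/2$ flow line started at a point outside $\ov{s_z}$ can enter $s_z$, and no such flow line started inside $s_z$ can exit $\ov{s_z}$ without first closing up the pocket boundary.}

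The key steps, in order, would be: (1) Identify $\del s_z$ with a union of segments of angle $\pi/2$ and $-\pi/2$ flow lines of $h$ (started from a countable dense set of rational points on $\del s_z$ obtained by approximating the separation time $\tau_z$), using interior flow line duality, exactly as in Step~5 of the proof of Proposition~\ref{prop:gasket_is_carpet}; in particular note that the left boundary of $s_z$ is a segment of an angle $+\pi/2$ flow line and the right boundary a segment of an angle $-\pi/2$ flow line. (2) Invoke the flow line interaction rules of \cite[Theorem~1.7]{ms2017ig4}: two flow lines of the same field whose angles differ by less than the critical gap cannot cross, and a flow line approaching another at a forbidden height difference must either bounce off (on the correct side) or merge, never pass through. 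Since $\del s_z$ consists of flow lines of angles $+\pi/2$ and $-\pi/2$, any flow line $\eta^{\pm}_w$ of angle $\pm\pi/2$ started from $w$ either outside $\ov{s_z}$ or inside $s_z$ cannot cross $\del s_z$; monotonicity and the bouncing rules then force it to stay on its side. (3) Translate this ``no crossing'' statement into the ordering statement: if $z_n \notin \ov{s_z}$ then $\eta^\pm_{z_n}$ stay outside $s_z$, so $z_m \in s_z$ lies in a connected component of $\D \sm(\eta^-_{z_n}\cup\eta^+_{z_n})$ that does not abut the relevant sides of $\eta^\pm_{z_n}$, whence $z_m \not\prec z_n$ in a way that would let $\eta'$ leave $\ov{s_z}$; conversely, every $z_m \in s_z$ does come before every point separated from $z$ only by the completion of $\CL_z$. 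Then appeal to the a.s.\ existence and continuity (parameterized by area) of $\eta'$ from \cite{ms2017ig4, msw2017cleperc} to conclude that once $\eta'$ has entered $s_z$ it must hit a dense set of $s_z$ before any point outside $\ov{s_z}$, and by continuity, all of $s_z$.

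The main obstacle I expect is Step~(1)--(3) bookkeeping of \emph{which} side of each boundary flow line the pocket lies on, and handling the places where $\del s_z$ touches $\del\D$ or where the flow lines bounce off $\del\D$: one must verify that the clockwise bouncing convention used to define the space-filling ordering is consistent with the orientation of $\CL_z$ as a clockwise loop, and that the approximation of $\del s_z$ by flow lines from rational points genuinely exhausts the boundary (this is where the a.s.\ statements about $\eta'_z$ always drawing arbitrarily small clockwise and counterclockwise loops, used in Proposition~\ref{prop:gasket_is_carpet}, come back in). A secondary subtlety is that $s_z$ may be cut off only after $\eta'_z$ has made finitely many counterclockwise loops around $z$; as in Steps~3--4 of Proposition~\ref{prop:gasket_is_carpet}, one reduces to the case of no such loops by passing to the last counterclockwise loop's component, and this reduction must be checked to be compatible with the space-filling ordering restricted to that subdomain.
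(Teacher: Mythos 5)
Your proposal takes a genuinely different, and considerably more laborious, route than the paper. The paper's proof of Lemma~\ref{lem:sfsle_pocket_interaction_48} never needs to identify $\del s_z$ with flow line segments. Instead it exploits the fact that in the $\kk'\in(4,8)$ case the $\CLE_{\kk'}$ exploration tree and the space-filling curve are built from the \emph{same} family of counterflow lines of $h$. Concretely: for $w\notin\ov{s_z}$, interior flow line duality (\cite[Theorem~1.13]{ms2017ig4}) identifies $\eta_w^{+}$ and $\eta_w^{-}$ as the left and right boundaries of the counterflow line $\eta'_w$ from $-i$ targeted at $w$; by target invariance, $\eta'_w$ and $\eta'_z$ agree up to the time at which they disconnect $w$ from $z$, and after that $\eta'_w$ stays in the complementary component containing $w$. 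Since $\eta'_z$ does not enter the interior of $s_z$ before closing the pocket, $\eta'_w$ never enters $s_z$, hence neither do $\eta_w^{\pm}$, and the ordering conclusion follows. No flow-line interaction rules, no boundary identification, and no analysis of special points or of where $\del s_z$ meets $\del\D$ are needed.

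Your plan instead mimics the strategy the paper reserves for the harder case $\kk\in(8/3,4)$ (Lemma~\ref{lem:sfsle_pocket_interaction_834}), where the $\CLE_\kk$ exploration tree is made of generalised flow lines rather than counterflow lines of the same field, so one is forced to realize loop segments as flow lines. Applied to $\kk'\in(4,8)$, that detour is unnecessary, and the obstacles you flag are exactly where the approach would need substantial additional work. In particular, step~(2) as stated is too weak: ``cannot cross $\del s_z$'' does not yield ``cannot enter $s_z$'' without ruling out entry through the start/end points of the boundary flow-line arcs and through the arcs of $\del\D$ bounding $s_z$ (the latter are not flow lines of $h$ at all). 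This is precisely the ``special point'' analysis that occupies the bulk of the proof of Lemma~\ref{lem:sfsle_pocket_interaction_834}; the target-invariance argument of the actual proof of the present lemma sidesteps it entirely. So while your sketch could likely be pushed through, it misses the structural reason this lemma is the easy one of the pair.
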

\begin{proof}
By conformal invariance, it suffices to consider the case that $z = 0$, and we write $s \equiv s_0$. As described in Section~\ref{subsec:cle48}, $s$ can be found by considering the counterflow line $\eta_0'$ of $h$ from $-i$ to $0$, stopped the first time it makes a clockwise loop around $0$. 
Let $w$ be any point not in $\ov s$. The position of $w$ in the space-filling ordering is determined by the flow lines $\eta_w^\pm$ of angle $\pm\pi/2$ of $h$ started from $w$.
By flow line duality \cite[Theorem~1.13]{ms2017ig4}, we know that $\eta_w^+$ (resp. $\eta_w^-$) is the left (resp.\ right) boundary of $\eta'_w$ (recall that this is the counterflow line of $h$ from $-i$ targeted at $w$). We will show that $\eta'_w$ cannot enter $s$. By the definition of our coupling, $\eta'_w$ and $\eta'_0$ agree up until the point at which these curves separate $w$ from $0$. 
Afterwards, the tip $\eta'_w(t)$ is always in the closure of the connected component of $\D\sm\eta'_w([0,t])$ containing $0$, so therefore cannot be in $s$. By the definition of $s$, $\eta'_0$ doesn't enter the interior of $s$ before time $\tau_0$ (the time at which $\eta'_0$ closes the pocket $s$ and determines $s$). Our claim that $\eta'_w$ cannot enter $s$ follows, meaning that its left and right boundaries $\eta_z^\pm$ do not enter $s$ either. It follows from the definition of the space filling ordering that a.s.\ there do not exist (rational) points $u_1, u_2 \in s$ such that $u_1 \prec w \prec u_2$ in the space-filling ordering, and hence that either all (rational) points in $s$ come before $w$ in the ordering, or after it. 
We can use the continuity of $\eta'$ combined with the fact that the times $(\eta')\nv(z_n)$ are dense to extend this result to all $u_1, u_2 \in s$ and all $w \notin \ov s$. This means that a.s.\ that there exists no triple of points $w \notin \ov s$ and $u_1, u_2 \in s$ such that $u_1 \prec w \prec u_2$, meaning exactly that the pocket $s$ is filled in by $\eta'$ in one go, completing the proof.
\end{proof}

\begin{remark}
Note that the above lemma does not hold if we look at \emph{loops} of $\GG$ rather than pockets. The interior of a loop $\ell$ will consist of a countable union of pockets, and it may be the case that $\eta'$ fills in one of these pockets and then enters the interior of another loop before returning to fill in a different pocket in $\ell$.
\end{remark}

\begin{lemma}\label{lem:sfsle_pocket_interaction_834}
Let $\eta'$ be a clockwise space-filling $\sleu$ loop in $\D$ starting from $-i$ coupled with a GFF $h$ as above, and let $\GG$ be a $\CLE_\kk$ coupled to the same GFF in the manner described in Section~\ref{subsec:cle834}. For a fixed $z \in \D$, let $\ell_z$ denote the loop surrounding $z$ (which exists and is simple a.s.).
Then $\eta'$ a.s.\ fills in (the interior of) $\ell_z$ in one go; that is, as soon as $\eta'$ enters $\ell_z$, it hits every point in its interior before hitting any point outside $\ell_z$.
\end{lemma}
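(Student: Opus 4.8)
By conformal invariance we may take $z=0$; write $P=\interior(\ell_0)$, so that $\ov P$ is the closed region bounded by $\ell_0$. As in the proof of Lemma~\ref{lem:sfsle_pocket_interaction_48}, it suffices to show that a.s.\ there is no triple $u_1,u_2\in P$ and $w\notin\ov P$ with $u_1\prec w\prec u_2$ in the space-filling ordering, since the continuity of $\eta'$ together with the density of the times $(\eta')\nv(z_n)$ then promotes this to the statement that $\eta'$ fills $P$ in one go. The position of $w$ in the ordering is determined by the interior flow lines $\eta_w^{+},\eta_w^{-}$ of $h$ of angles $\pm\pi/2$ started from $w$, which by interior flow line duality (\cite[Theorem~1.13]{ms2017ig4}) are the left and right boundaries of the counterflow line $\eta'_w$ of $h$ targeted at $w$; so it is enough to show that for each fixed $w\notin\ov P$ the curves $\eta_w^{\pm}$ a.s.\ do not enter $P$.

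The key input is the imaginary-geometry description of the $\CLE_\kk$ exploration from Section~\ref{subsec:cle834}. The loop $\ell_0=\del P$ is the first excursion loop surrounding $0$ of the exploration branch $\eta_0$ targeted at $0$; each excursion loop is traced during a single excursion interval of $W_t-O_t$ away from $0$, during which the trunk is stationary, and since the exploration proceeds from the boundary inwards and $\ell_0$ is the outermost $\CLE_\kk$ loop around $0$, before $\ell_0$ has been fully traced the curve $\eta_0$ lies in $\D\sm P$ (it may touch $\ell_0$ but not enter $P$). From the boundary data recorded in Figure~\ref{fig:eta_y_bcs} one reads off that $\ell_0$ is in fact a flow line of $h$ of angle $-\pi/2$, having wound once around $0$ by the time it closes up; in particular $\ell_0$ is a local set of $h$, and the conditional law of $h$ inside $P$ given $\ell_0$ is that of a GFF with flow-line boundary data. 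I would then combine two ingredients. First, by target invariance of $\SLE_\kk(\kk-6)$ (Section~\ref{subsubsec:gen_sle}), the branch $\eta_w$ agrees with $\eta_0$ until $w$ is separated from $0$; since $w\notin\ov P$ this happens no later than the completion of $\ell_0$, and afterwards the tip of $\eta_w$ stays in the component containing $w$, which (because $P$ is connected and contains $0$) is disjoint from $P$; hence $\eta_w$, and in particular its trunk, avoids $P$. Second, because $\ell_0$ is an angle-$(-\pi/2)$ flow line of $h$, the flow line interaction rules of \cite[Theorem~1.7]{ms2017ig4} constrain the angle-$\pm\pi/2$ flow lines $\eta_w^{\pm}$ started from $w\notin\ov P$: $\eta_w^{-}$, of the same angle as $\ell_0$, may merge with it but cannot cross it, while $\eta_w^{+}$, whose angle differs from that of $\ell_0$ by $\pi$, reaches $\ell_0$ from the outside of $P$ with a height gap that forbids crossing into $P$. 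Thus neither $\eta_w^{+}$ nor $\eta_w^{-}$ enters $P$, which is what we needed.

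The step I expect to be the main obstacle is the last one: keeping track of the winding of the flow-line loop $\ell_0$ when applying the interaction rules of \cite{ms2017ig4}, i.e.\ checking that $\eta_w^{\pm}$ meet $\ell_0$ with the height gap that forbids crossing into $P$ rather than the one that permits it. This is the same sort of height-gap bookkeeping that appears in Step~2 of the proof of Proposition~\ref{prop:gasket_is_carpet}, and it presupposes the (also not entirely routine) identification of $\ell_0$ and of the field inside $P$ with the imaginary-geometry picture, read off from the generalised-flow-line coupling of \cite[Section~9]{msw2017cleperc} together with Figure~\ref{fig:eta_y_bcs}, which is what legitimises invoking \cite{ms2017ig4}. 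A cleaner route, if it can be made to work, would be to identify the trunk of $\eta_w$ with the counterflow line $\eta'_w$ of $h$: then the fact that $\eta_w$ avoids $P$ immediately yields that $\eta'_w$ avoids $P$ and hence that $\eta_w^{\pm}$ avoid $P$, bypassing the winding computation; alternatively, one can use the $\CLE_\kk$/$\CLE_{\kk'}$ coupling of \cite{msw2017cleperc} to write $P$ as a union of $\CLE_{\kk'}$ pockets (each filled in one go by Lemma~\ref{lem:sfsle_pocket_interaction_48}) and argue that these pockets are visited consecutively by $\eta'$, though this only relocates the difficulty.
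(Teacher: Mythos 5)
You have the right reduction (to $z=0$, then to showing $\eta_w^{\pm}$ cannot enter $P$ for $w\notin\ov P$) and the right class of tools (imaginary geometry and the flow-line interaction rules of \cite{ms2017ig4}). But the proposal leans on a claim that you present as something ``one reads off'' from Figure~\ref{fig:eta_y_bcs}, namely that $\ell_0$ \emph{is} a flow line of $h$ and is in particular a local set of $h$. This is exactly where the real work is, and it is not a bookkeeping issue about winding. An excursion loop of the $\SLE_\kk(\kk-6)$ exploration begins and ends at a \emph{special point} where $W-O$ vanishes, and the boundary data recorded in Figure~\ref{fig:eta_y_bcs} is only stated at stopping times $\tau$ with $W_\tau\neq O_\tau$, i.e.\ strictly inside the excursion. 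There is no flow-line coupling that produces the whole closed loop in one shot starting from the degenerate special point. What the paper actually proves is a weaker (but sufficient) statement: for every $\ee>0$, conditionally on the exploration up to the first time $\ss_{\ee,m}$ that the Bessel-type process $X$ reaches height $\ee$ within an excursion, the \emph{remainder} of that excursion loop is a chordal flow line of the conditional field in the current complementary component; one then applies \cite[Proposition~3.7]{ms2017ig4} to decompose it into segments of interior flow lines of angle $\pi/2$, identifies these with interior flow lines of the original field $h$ via flow-line uniqueness, and finally takes $\ee\to 0$ to cover every part of $\ell_0$ at positive distance from the special point. Only then do the interaction rules kick in. Moreover, this argument has to be run \emph{simultaneously} over all excursion loops of the exploration (not just $\ell_0$), because the stopping times $\ss_{\ee,m}$ index all large excursions at once, and the paper is explicit that the naive loop-by-loop version does not go through cleanly.

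Two further points. First, even after the flow-line identification, the interaction rules only show $\eta_w^{\pm}$ cannot cross $\ell_0$ \emph{except possibly at the special point}; the paper closes this off with a separate topological/self-intersection argument ($\eta_w^{+}$ would have to exit through the same point, contradicting the fact that an interior flow line can only self-intersect after winding around its start point). Your proposal does not address the special point at all. Second, your suggested ``cleaner route'' of identifying the trunk of $\eta_w$ with the counterflow line $\eta'_w$ and concluding that $\eta_w^{\pm}$ avoid $P$ does not straightforwardly work: the trunk avoiding $P$ does not imply that the left/right boundaries of $\eta'_w$ avoid $P$, since those boundaries enclose a region that $\eta'_w$ traverses, and the excursion loops of $\eta_w$ (which are precisely the objects one must control) live in that region as well. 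The angle you quote for $\ell_0$ (``$-\pi/2$'') also disagrees with the paper's ($\pi/2$ modulo $2\pi$), which would matter in the interaction-rule step; this is a sign that the identification has not actually been carried out.
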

\begin{proof}
We will prove that the result holds a.s.\ for the loop $\ell \equiv \ell_0$ of $\GG$ surrounding $0$. The full result follows since we can deduce the analogous result for the loop surrounding any point $w \in \D$ via conformal invariance. 
Recall from Section~\ref{subsec:cle834} that $\ell$ is determined by a radial $\slekk$ curve started from $-i$ and targeted at $0$ which is coupled with $h$ in terms of a collection of chordal $\slekk$ curves $(\eta_k)_{k \in \N}$.
Let $\eta$ denote the concatenation of these curves, which is a radial $\slekk$ from $-i$ to $z$. Let $(\tau_k)$ be the stopping times associated to the curves $(\eta_k)$; that is, $\tau_k$ is the first time that $\eta_k$ disconnects $0$ from $y_k$.
In the following, we will only ever consider $\eta_k$ before time $\tau_k$ and will write $\eta_k$ to mean $\eta_k|_{[0,\tau_k]}$. 
As before, let $D_k$ denote the connected component of $\D \sm (\eta_1 \cup \dots \cup \eta_{k-1})$ containing $0$, so that $\eta_k$ is a curve in $D_k$ from $\eta_{k-1}(\tau_{k-1})$ to a point $y_{k-1}$.
As explained in Section~\ref{subsec:cle834}, for each stopping time $\tau$ of the curve $\eta$, $\eta([0,\tau])$ is a local set of $h$ with known boundary conditions.
We will define our choice of the points $y_k$ explicitly. Let $\ff_k \colon D_k \to \h$ be the (unique) conformal map sending $\eta_{k-1}(\tau_{k-1})$ to $0$ and $0$ to $i$. Then set $y_k = \ff_k\nv(\infty)$.
See Figure~\ref{fig:all_bcs} for a depiction of this setup.

\begin{figure}[hbtp]
    \centering
    \includegraphics[scale=0.9]{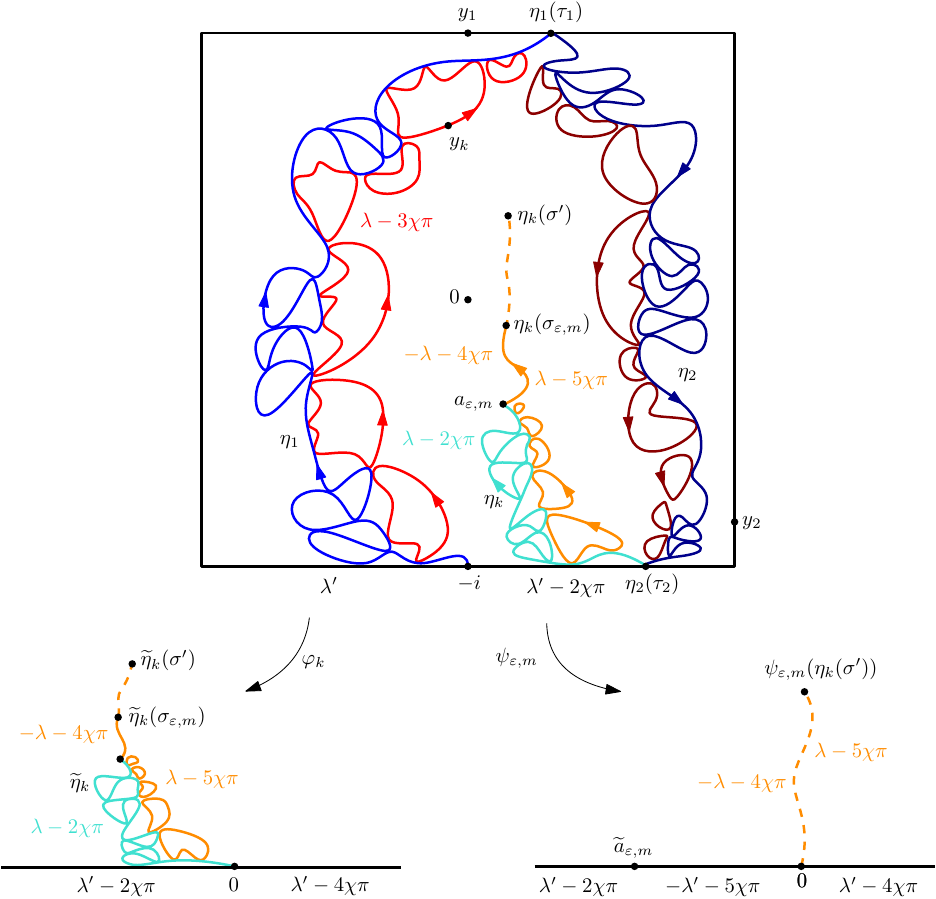}
    \caption{In the top figure, we depict $\eta_1, \cdots, \eta_{k-1}$ and $\eta_k|_{[0, \ss_{\ee,m}]}$ (in the case $k = 3$). Comparing to Figure~\ref{fig:eta_y_bcs}, we can deduce the boundary conditions for the field $h_k = h|_{D_k}$ conditional on $\eta_1, \cdots, \eta_{k-1}$. We can similarly determine the boundary conditions of $h_{\ee,m}$ along $\eta_k([0, \ss_{\ee,m}])$.
    We include the bottom figures to make each of these statements precise. On the left, we depict the boundary conditions of $\wt h_k = h_k \circ \ff_k\nv - \chi\arg(\ff_k\nv)'$ which are given (up to a multiple of $2\pi\chi$ caused by the choice of branch of $\arg$) by $\la' - 2\chi\pi$ on $\R_-$ and by $\la' - 4\chi\pi$ on $\R_+$. 
    Note that the multiple of $2\chi\pi$ for $\wt h_k$ depends on the branch of $\arg$ chosen, but the multiple of $2\chi\pi$ along the curve $\eta_k$ \text{is} uniquely determined. 
    It depends solely on how many times the curves $\eta_1, \dots, \eta_{k-1}$ have wound around $0$.
    Let $a_{\ee,m} = \eta_k(\aal_{\ee,m})$ be the starting point of the excursion loop containing $\eta_k(\ss_{\ee,m})$. More specifically, let $a_{\ee,m}$ denote the prime end at this point which is on the left side of $\eta_k$.
    Let $\wt{a}_{\ee,m}$ be the image of this prime end under $\psi_{\ee,m}$. 
    Then the boundary conditions of $\wt h_{\ee,m} = h_{\ee,m} \circ \psi_{\ee,m} - \chi\arg(\psi_{\ee,m}\nv)'$ are given by $\la' - 2\chi\pi$ on $(-\infty, \wt{a}_{\ee,m})$, by $-\la' - 5\chi\pi$ on $(\wt{a}_{\ee,m}, 0)$ and by $\la' - 4\chi\pi$ on $\R_+$. Again, these boundary conditions are only correct up to a multiple of $2\chi\pi$.
    To determine the boundary conditions of $h_{\ee,m}$ conditional also on $\eta_k([\ss_{\ee,m}, \ss'])$, we just repeat the above description with $\ss'$ in place of $\ss_{\ee,m}$.
    This allows us to identify $\eta_k([\ss_{\ee,m}, \infty))$ as a flow line of $h_{\ee,m}$.
    }
    \label{fig:all_bcs}
\end{figure}

Each curve $\eta_k$ consists of a trunk and excursion loops hanging off the right side of this trunk. Let $L^k = (\ell^k_j)$ denote the collection of such loops drawn by $\eta_k$, and let $L$ be the union of all the $L_k$.
We now explain why $\ell$ is a.s.\ an excursion loop of one of the curves $\eta_k$. 
If, at time $\tau_1$, an excursion loop of $\eta_1$ has been drawn around $0$, then we are done. If this occurs, then $\tau_1$ is necessarily the closing time of this excursion loop.
Otherwise, $0$ is cut off from $y_1$ by $\eta_1$ before $\eta_1$ has drawn $\ell$, and we must continue and look at $\eta_2$.
If none of $\eta_1, \dots, \eta_{k-1}$ have drawn the loop $\ell$ by the times $\tau_1, \dots, \tau_{k-1}$, respectively, it follows from conformal invariance that the probability that $\eta_{k}$ draws $\ell$ before $\tau_{k}$ is constant (and positive) because of our choice of $y_k$. 
It follows that a.s.\ there exists $k < \infty$ such that $\ell$ is drawn by $\eta_k$. That is, $\ell \in L$ a.s.

For each rational point $w \in \D$, let $\eta_w^\pm$ be the (interior) flow lines of $h$ with angle $\pm \pi/2$, respectively. We claim that a.s.\ no flow line $\eta_w^\pm$ started outside any loop $\ell_j^k \in L$ enters the interior of $\ell_j^k$ (in the following, we will use $\ell_j^k$ to denote both the loop and its interior---this should not cause any confusion). The remainder of the proof will be devoted to the proof of this fact.
Assuming the claim, since the collection of flow lines $(\eta_w^\pm)_{w \in \D \cap \Q^2}$ determine the space-filling $\sleu$ $\eta'$, we can conclude by repeating the argument at the end of Lemma~\ref{lem:sfsle_pocket_interaction_48} that every loop $\ell_j^k$ in $L$ (including $\ell$) is a.s.\ filled in in one go by $\eta'$, completing the proof.
It remains therefore to prove the claim.

Fix $k \in \N$. We will prove that for all $j \in \N$ and all rational $w \in \D$ which are not in $\ov{\ell_j^k}$, the curves $\eta_w^\pm$ do not enter $\ell_j^k$ a.s.
Heuristically, the basic idea is to let $\eta_k$ draw a small part of $\ell_j^k$, and then show that the remainder of $\ell_j^k$ is a flow line of the field conditional on $(\eta_j([0, \tau_j]))_{j < k}$ and on the part of $\eta_k$ drawn up to this point. 
However making this precise requires a slightly different argument which needs to consider all loops $(\ell_j^k)_{j \in \N}$ simultaneously.

To this end, fix $\ee > 0$. Recall from Section~\ref{subsubsec:gen_sle} that associated to the $\SLE_\kk(\kk-6)$ curve $\eta_k$ are two processes which we call $W$ and $O$ (these are obtained by mapping $D_k$ to $\h$ and considering the image of $\eta_k$).
Define $X_t = (W_t - O_t)/\sqrt{\kk}$.
Let $\bb_{\ee,0} = 0$ and define for $m \geq 1$ 
\begin{gather*}
    \ss_{\ee, m} = \inf\{t > \bb_{\ee, {m-1}} \colon X_t = \ee\},\\
    \aal_{\ee, m} = \sup\{t < \ss_{\ee, {m}} \colon X_t = 0\},\\
    \bb_{\ee, m} = \inf\{t > \ss_{\ee, {m}} \colon X_t = 0\},
\end{gather*}
Note that $\ss_{\ee,m}$ and $\bb_{\ee,m}$ are stopping times for $\eta_k$ (hence also for $\eta$), but $\aal_{\ee,m}$ is not.
The interval $[\aal_{\ee,m}, \bb_{\ee,m}]$ is the $m$th excursion of $X$ away from $0$ on which $X$ attains a maximum of at least $\ee$, and $\ss_{\ee,m}$ is first time that $X$ reaches height $\ee$ during this interval.

Now, we describe the boundary conditions for the GFF $h$ conditional on certain local sets. 
The set $\eta_1 \cup \cdots \cup \eta_{k-1}$ is a local set for $h$ (recall that $\eta_j$ denotes $\eta_{j}|_{[0, \tau_j]}$, and here we identify this curve with its range). 
Let $h_k$ denote the restriction of $h$ to $D_k$ and define $\wt h_k = h_k \circ \ff_k\nv - \chi\arg(\ff_k\nv)'$ (where $\ff_k \colon D_k \to \h$ is as above). 
Then, conditional on $\eta_1, \cdots, \eta_{k-1}$, the field $h_k$ has the law of a GFF on $D_k$  with boundary conditions such that $\wt h_k$ is a GFF on $\h$ with boundary conditions (up to a multiple of $2\pi\chi$ caused by the choice of branch of $\arg$) given by $\la'$ on $\R_-$ and by $\la' - 2\chi\pi$ on $\R_+$.

Conditional on $\eta_1, \cdots, \eta_{k-1}$, recall that $\eta_k$ is (up to parametrisation) an $\slekk$ on $D_k$ from $\eta_{k-1}(\tau_{k-1})$ to $y_k$, stopped when it disconnects $0$ from $y_k$, which is coupled to $h_k$ as a generalised flow line, as defined in Section~\ref{subsec:cle834}. 
Since $\eta_k$ is coupled to $h_k$ as a generalised flow line, we have that $\eta_k([0, \ss_{\ee,m}])$ is a local set of $h_k$.
Denote by $D_{\ee,m}$ the component of $D_k \sm \eta_k([0, \ss_{\ee,m}])$ containing $0$ and let $h_{\ee,m}$ be the restriction of $h$ to $D_{\ee,m}$. 
The boundary conditions of $h_{\ee,m}$ conditional on $\eta_1, \cdots, \eta_{k-1}$ and $\eta_k([0, \ss_{\ee,m}])$ are described in Figure~\ref{fig:all_bcs}.

Recall that the curve $\eta_k|_{[\aal_{\ee,m}, \bb_{\ee,m}]}$ corresponds to an excursion of the process $X$ and is a.s.\ a simple loop. 
Consider now the remainder of this excursion loop, $\eta_k|_{[\ss_{\ee,m}, \bb_{\ee,m}]}$. If $\ss$ is any stopping time for $\eta_k$ which is a.s.\ in $(\ss_{\ee,m}, \bb_{\ee,m})$ then $\eta_k([0, \ss])$ is a local set of $h_k$ with boundary conditions again as in Figure~\ref{fig:all_bcs}. 
In particular, $\eta_k([\ss_{\ee,m}, \ss])$ is a local set of $h_{\ee,m}$ and has boundary conditions (up to a multiple of $2\chi\pi$) given by $\la - 5\chi\pi$ (resp. $-\la - 4\chi\pi)$ on (a hypothetical) vertical segment on the right (resp.\ left) side of the curve\footnote{The multiple of $2\pi\chi$ depends on the winding of $\eta_1, \dots, \eta_{k-1}$ around $0$, and the winding (in the sense of \cite{ms2016imag1}) of $\eta_k$ on this vertical segment}.
Its true boundary conditions are determined in terms of its winding, as defined in \cite{ms2016imag1}. 
In particular, $\eta_k([\ss_{\ee,m}, \ss])$ has exactly the same boundary conditions as a flow line of $h_{\ee,m}$ of angle $9\chi\pi/2$ (up to multiples of\footnote{This angle is correct only up to possible multiples of $2\pi$. The multiple in question depends in principle on the winding of $\eta_1, \dots, \eta_{k-1}$ and $\eta_{k}([0, \ss_{\ee,m}))$. However, as is explained further in Footnote~\ref{ftn:FL_angle}, without some additional information, the angle of a flow line in a general domain is only defined up to multiples of $2\pi$. One must specify some additional information in order to determine this multiple. In the situation in Figure~\ref{fig:all_bcs}, the boundary conditions agree with those of a flow line of angle $9\chi\pi/2$, but in different situations this value may be $\pi/2 + 2m\pi$ for any $m \in \Z$. In the remainder of this proof, we will assume that we are in the setting where this angle is indeed $9\chi\pi/2$.} $2\pi$), and by uniqueness \cite[Theorem~1.1]{ms2016imag1}, we conclude that $\eta_k|_{[\ss_{\ee,m}, \bb_{\ee,m}]}$ is exactly this flow line. 
Note that by \cite[Theorem~1.1]{ms2016imag1}, we can see from the boundary conditions of $h_{\ee,m}$ that this flow line has the law of an $\SLE_\kk$ from $\eta_k(\ss_{\ee,m})$ to the prime end of $D_{\ee,m}$ corresponding to the start point of the excursion loop which is on the left side of $\eta_k$ (see Figure~\ref{fig:all_bcs}).

Let $\eta_{\ee,m}$ be the flow line of $h_{\ee,m}$ of angle $9\chi\pi/2$ (again, this angle may differ by multiples of $2\pi$, but this will not affect our argument) starting from $\eta_k(\ss_{\ee,m})$. 
For each $w \in \D$ let $\wt\eta^{\,+}_w$ be the (interior) flow line of angle $\pi/2$ of $h_{\ee,m}$ started from $w$. Note that $\wt\eta^{\,+}_w$ is only defined when $w \in D_{\ee,m}$.
By \cite[Proposition~3.7]{ms2017ig4}, $\eta_{\ee,m}$ can a.s.\ be decomposed into segments of interior flow lines of $h_{\ee,m}$ of angle\footnote{Interior flow lines of angle $\pi/2$ and $\pi/2 + 2m\pi$ coincide for $m \in \N$.} $\pi/2$. 
By this, we mean that there exist a collection of rational points $w_n \in D_{\ee,m}$ such that $\eta_{\ee,m}\sm\{\eta_k(\ss_{\ee,m})\}$ is contained in $\bigcup_n \wt\eta^{\,+}_{w_n}$.
Denote by $\eta^+_w$ the flow line of angle $\pi/2$ of $h$ (our original GFF on $\D$) starting from $w \in \D$. 
On the event that a fixed point $w \in \D$ is in $D_{\ee,m}$, then by flow line uniqueness \cite[Theorems~1.1 and~1.2]{ms2017ig4}, this curve is equal to $\wt\eta^{\,+}_{w}$ up until the first time that the curves hit $\del D_{\ee,m}$.
Identifying $\eta_{\ee,m}$ and $\eta_k|_{[\ss_{\ee,m}, \bb_{\ee,m}]}$, it follows that $\eta_k((\ss_{\ee,m}, \bb_{\ee,m}))$ is contained in $\bigcup_n \eta^{+}_{w_n}$.

Let us now conclude. Suppose that $\ell_j^k$ is an excursion loop of $\eta_k$, and suppose it it is traced by $\eta_k|_{[\aal,\bb]}$. Let $x_j^k$ be its start and endpoint, which we refer to as its special point.
Then, there will exist $\ee > 0$ such that $\sup X|_{[\aal, \bb]} > \ee$, so there must exist some $m$ such that $\ss_{\ee,m} \in (\aal, \bb)$ and hence that $\aal = \aal_{\ee,m}$ and $\bb =  \bb_{\ee,m}$. 
Hence, it follows from the above that the part of the loop $\ell_j^k$ traced by $\eta_k|_{[\ss_{\ee,m}, \bb]}$ is contained in a union of flow lines of $h$ of angle $\pi/2$.
But by considering $\ee_n = 1/n$ for all $n \in \N$, we have a.s.\ that $\ss_n := \inf\{t > \aal \colon X_t = 1/n\}$ converges to $\aal$ as $n \to \infty$. 
For each $n$, there exists $m_n \in \N$ such that $\ss_n = \ss_{\ee_n, m_n}$. By arguing as above for all $n$, we see that every segment of $\ell_j^k$, which is at a positive distance from $x_j^k$, must be contained in some interior flow line of $h$ of angle $\pi/2$.

It follows from the flow line interaction rules \cite[Theorem~1.7]{ms2017ig4} that if $u, w \in \D$, then neither $\eta_u^+$ nor $\eta_u^-$ can cross $\eta_w^+$ a.s.
It follows that if $w \in \D$ is not in the closure of $\ell_j^k$, then $\eta_w^+$ (the same argument works for $\eta_w^-$) cannot cross $\ell_j^k$ except possibly at $x_j^k$.
However, suppose that $\eta_w^+$ does cross into $\ell_j^k$ at this special point. Then $\eta_w^+$ must leave $\ell_j^k$ again (since it must terminate somewhere on $\del \D$), and by the above can only leave $\ell_j^k$ through this same special point.
But $\eta_w^+$ can only intersect itself after winding around its startpoint \cite{ms2017ig4}, and $w$ is not in the interior of $\ell_j^k$. Therefore this could not have occurred, leading to a contradiction. 
We conclude therefore that $\eta_w^+$ (and similarly $\eta_w^-$) a.s.\ does not enter the interior of $\ell_j^k$.
This completes the proof of the claim, and also the lemma.
\end{proof}

\section{An area-filling property of space-filling SLE}\label{sec:area_filling}
In this section we will prove the following result. 
\begin{proposition}\label{prop:area_filling}
Fix $\kk' \in (4,8)$ and let $\eta'$ be a clockwise space-filling $\SLE_{\kk'}(\kk' - 6)$ loop in $\D$ starting from $-i$. For $\ee > 0$ and $\xi > 1$ define $E_{\ee, \xi}$ to be the event that for all $\dd \in (0,\ee]$ and for all $0 \leq a < b < \infty$ with $\diam(\eta'([a,b])) \geq \dd$, the set $\eta'([a,b])$ contains a ball of radius $\dd^\xi$. Then, for a fixed $\xi > 1$, $\p[E_{\ee, \xi}]$ converges to $1$ faster than any power of $\ee$ as $\ee \to 0$. That is, for any $\xi > 1$ and $n \in \N$, there exists a constant $A_{n,\xi} > 0$ such that for all $\ee > 0$,
\[\p[E_{\ee, \xi}^c] \leq A_{n,\xi}\,\ee^n.\]
\end{proposition}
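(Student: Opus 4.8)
The plan is to peel off, by a union bound over dyadic scales and over a grid of balls at each scale, everything except a single estimate at unit scale, and to obtain that last estimate from (an extension of) \cite{ghm2020almost}. Fix $\xi' \in (1,\xi)$. For a segment $\eta'([a,b])$ of diameter $D$, the event in the definition of $E_{\ee,\xi}$ is exactly that $\eta'([a,b])$ contains a ball of radius $(\min(\ee,D))^\xi$, since $t \mapsto t^\xi$ is increasing and a ball of a given radius contains every smaller ball. Applying the intermediate value theorem to the continuous map $t \mapsto \diam(\eta'([a,t]))$, any segment of diameter $\geq\ee$ contains a sub-segment of diameter exactly $\ee$, so only segments of diameter $\le\ee$ matter. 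Let $F_k = \{\eta'([a,b]) \text{ contains a ball of radius } 2^{-k\xi'} \text{ whenever } \diam(\eta'([a,b])) \geq 2^{-k}\}$. A short computation (the gap between $\xi'$ and $\xi$ absorbs the constant lost in rounding $D$ to a power of two, once $k$ is large) shows $\bigcap_{k \geq \lceil \log_2(1/\ee)\rceil} F_k \subseteq E_{\ee,\xi}$ for $\ee$ small, so $\p[E_{\ee,\xi}^c] \leq \sum_{k \geq \lceil\log_2(1/\ee)\rceil}\p[F_k^c]$, and it suffices to produce, for each $n$, a constant $c_n$ with $\p[F_k^c] \leq c_n 2^{-kn}$ for all $k$.

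\textbf{Reduction to one scale and location.}
Fix $k$, write $r = 2^{-k}$, $\rho = r^{\xi'}$, and cover $\D$ by $O(r^{-2})$ balls $B(z_i,r)$ with centres on a grid of mesh $r$; every segment of diameter $\le r$ lies in some $\ov{B(z_i,3r)}$. Thus $F_k^c \subseteq \bigcup_i G_i$ with $G_i$ the event that $\eta'$ traces inside $\ov{B(z_i,3r)}$ a connected set of diameter $\geq r$ that contains no ball of radius $\rho$, and it is enough to show $\p[G_i] \leq c_n' r^{n+2}$ uniformly in $i$. Here we invoke the local-set/imaginary-geometry description of the space-filling $\SLE_{\kk'}(\kk'-6)$ loop from Section~\ref{sec:sfsle}: either via the strong Markov property at successive entrance times of $\eta'$ into $\ov{B(z_i,3r)}$ (summing over the a.s.\ finitely many excursions that reach diameter $\geq r$, crudely bounded), or, avoiding the excursion count, by reparametrizing $\eta'$ by area and unioning over a fine deterministic time-grid (using the Hölder continuity of $\eta'$ in this parametrization), one reduces $\p[G_i]$ to a uniform bound on the conditional probability that, given the explored past and the boundary data of the field on the unexplored component, the continuation of $\eta'$ inside $\ov{B(z_i,3r)}$ traces diameter $\geq r$ without filling a ball of radius $\rho$.

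\textbf{The unit-scale estimate.}
On that conditional event, GFF locality shows that the law of the continuation of $\eta'$ restricted to $\ov{B(z_i,3r)}$ is absolutely continuous, with Radon--Nikodym derivative bounded by a universal constant, with respect to the restriction to $B(z_i,2r)$ of a space-filling $\SLE_{\kk'}(\kk'-6)$ freshly started near $B(z_i,2r)$, the boundary data ranging over a fixed compact set. Rescaling by $r^{-1}$, the required bound becomes the statement that a space-filling $\SLE_{\kk'}(\kk'-6)$ (in a fixed domain, boundary data in a fixed compact set) traces a connected set of diameter at least a fixed constant without that set containing a ball of radius $t := \rho/r = r^{\xi'-1}$ only with probability decaying faster than any power of $t$ as $t \to 0$. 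This is (a strengthening of) \cite{ghm2020almost}, which says that space-filling SLE cannot traverse a macroscopic Euclidean distance without filling a ball of comparable size; the extension amounts to allowing the boundary data and starting configuration above, passing from ``from the start of the curve'' to ``on an arbitrary macroscopic sub-interval'' by a further union bound, and --- should the bound of \cite{ghm2020almost} be merely polynomial --- upgrading it to super-polynomial decay by iterating across a geometric sequence of scales between $t$ and $1$. Taking the exponent in this unit-scale bound large enough in terms of $n$ and $\xi'$, and recalling $t = r^{\xi'-1} \to 0$, gives $\p[G_i] \leq c_n' r^{n+2}$, which closes the argument.

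\textbf{Main difficulty.}
The genuinely delicate point is the locality/absolute-continuity step: the space-filling loop is a function of the whole-domain GFF and has no naive domain Markov property, so the comparison inside $\ov{B(z_i,3r)}$ must be run through the imaginary-geometry coupling and the field's local-set decomposition, with the Radon--Nikodym derivative controlled \emph{uniformly} over all admissible pasts and boundary data, and with the $O(r^{-1})$ balls meeting $\del\D$ treated separately. Controlling the number of relevant excursions into each ball (or setting up the area-parametrized time-grid) is a secondary but fiddly point, and honestly extracting a super-polynomial tail from \cite{ghm2020almost} is the other place where real work is needed; the unit-scale input itself is, modulo these adaptations, due to \cite{ghm2020almost}.
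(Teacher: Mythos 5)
Your high-level template (dyadic scales, union bound over a grid of balls, one-ball estimate) matches the paper's outline, but the two places where you plant the real content are precisely the places where the paper does something different and where your argument as written has gaps.

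First, the absolute-continuity step. You claim the conditional law of the continuation of $\eta'$ inside $\ov{B(z_i,3r)}$, given the explored past, is absolutely continuous with respect to a ``freshly started'' space-filling $\SLE_{\kk'}(\kk'-6)$ with Radon--Nikodym derivative \emph{bounded by a universal constant}. That $L^\infty$ bound is not justified and is almost certainly false: even in the simplest comparison between two GFFs on overlapping domains the Radon--Nikodym derivative is only in $L^p$ for some $p>1$, not $L^\infty$ (this is exactly what Lemma~\ref{lem:gff_ac} gives). Moreover, conditioning on the ``explored past'' of the space-filling curve at a stopping time is delicate --- the space-filling loop is a function of the whole-domain field and does not have a naive domain Markov property --- and getting a bound on the Radon--Nikodym derivative that is \emph{uniform over all admissible pasts} is not a small technical point; it is a serious obstruction. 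The paper sidesteps all of this by doing the absolute-continuity comparison only once, at a single fixed scale $B(0,1/2)$, between the two GFFs themselves (not between conditional laws of the curve), obtaining an $L^p$ bound on $\CZ$ via Lemma~\ref{lem:gff_ac} and combining with the super-polynomial decay from Lemma~\ref{lem:ghm_lemma} using H\"older. It then propagates to interior balls $B(z,r)$ with $B(z,2r) \subseteq \D$ not by a fresh absolute-continuity argument at each scale, but by \emph{conformal invariance of the law of $\eta'$} together with Koebe distortion estimates, at the cost of degrading the exponent from $\xi$ to $2\xi - 1$. This is a materially different and substantially simpler route.

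Second, the boundary balls. You correctly flag that the $O(r^{-1})$ balls meeting $\del\D$ must be ``treated separately'' but do not say how, and this is not a detail: for those balls your absolute-continuity comparison to a fresh curve has no chance of working uniformly (the deterministic boundary data of $h$ is genuinely different there, and the boundary conditions jump at $-i$). The paper devotes its entire Step~4 to this, introducing the boundary flow lines $\eta_{j,k}$ of angle $-\pi/2$ started from a ring of boundary points $z_{j,k}$, the events $F_{j,k}$, and a percolation-type argument to show that, with failure probability $\leq 2^{-nk}$ per scale, $\eta'$ cannot travel a distance of order $k 2^{-k}$ inside the thin annulus $\D \setminus B(0, 1 - 2^{-\xi k}/100)$ without escaping it. This forces every long segment of $\eta'$ into some interior ball where the conformal-invariance estimate of Step~2 applies. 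Without a concrete substitute for this step, your argument does not cover segments of $\eta'$ that hug $\del\D$.

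A minor point: you write as though the super-polynomial decay might need to be extracted from \cite{ghm2020almost} by iteration across scales; in fact \cite[Proposition~3.4 and Remark~3.9]{ghm2020almost} (quoted here as Lemma~\ref{lem:ghm_lemma}) already gives the super-polynomial bound for whole-plane space-filling SLE, so no upgrading is needed, only the transfer to the loop in $\D$, which is exactly where the two difficulties above live.
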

This proposition is an extension of \cite[Proposition~A.2]{miller2021tightness}, which states that for each $\xi > 1$ there almost surely exists $\ee > 0$ such that $E_{\ee, \xi}$ holds. 
Proposition~A.2 in \cite{miller2017dimension} is, in turn, an adaptation of \cite[Proposition~3.4 and Remark~3.9]{ghm2020almost}, which we state here in the form given in \cite[Lemma~A.3]{miller2021tightness}.

\begin{lemma}[{\cite[Proposition~3.4 and Remark~3.9]{ghm2020almost}}]\label{lem:ghm_lemma}
Fix $\kk' \in (4,8)$ and suppose that $\eta_w'$ is a space-filling $\SLE_{\kk'}$ from $\infty$ to $\infty$ in $\C$. For $\xi > 1, R > 0$ and $\ee > 0$ let $E^w_{\ee, \xi, R}$ be the event that the following is true. For all $\dd \in (0, \ee]$ and $0 \leq a < b < \infty$ with $\eta_w'([a,b]) \subseteq B(0,R)$ and $\diam(\eta_w'([a,b])) \geq \dd$, the set $\eta_w'([a,b])$ contains a ball of radius $\dd^\xi$. Then, for fixed $\xi > 1, R > 0$, $\p[E^w_{\ee, \xi, R}]$ converges to $1$ faster than any power of $\ee$ as $\ee \to 0$. That is, for each $\xi > 1, R > 0$ and $n \in \N$ there exists $C_{n, \xi, R} > 0$ such that for all $\ee > 0$,
\[\p[(E^w_{\ee, \xi, R})^c] \leq C_{n, \xi, R}\,\ee^n.\]
\end{lemma}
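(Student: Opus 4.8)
The plan is to follow the strategy of \cite{ghm2020almost}. The key structural ingredients are: by imaginary geometry, the boundary of a cell $\eta_w'([a,b])$ is a union of finitely many arcs of flow lines (of angles $\pm\pi/2$) of an auxiliary whole-plane GFF $h$, and these are $\SLE_\kk$-type curves with $\kk=16/\kk'$; the coupling of $\eta_w'$ with $\gamma$-Liouville quantum gravity, $\gamma=4/\sqrt{\kk'}\in(\sqrt2,2)$, under which $\eta_w'$ parameterized by $\gamma$-quantum area on a $\gamma$-quantum cone is H\"older continuous, while by mating of trees the left/right quantum boundary lengths of $\eta_w'([0,t])$ form a correlated planar Brownian motion; and the standard tail bounds for the $\gamma$-LQG area measure, by which $\mu_h(B(z,\rho))\in[\rho^{\alpha_1},\rho^{\alpha_0}]$ for all dyadic balls $B(z,\rho)\subseteq B(0,2R)$ of radius $\le\rho_0$, outside an event of probability $\le C_n\rho_0^n$ for every $n$, for any fixed $0<\alpha_0<2+\gamma^2/2<\alpha_1$.

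\emph{Reduction.} By scale invariance of whole-plane space-filling $\SLE_{\kk'}$, continuity of $\eta_w'$, the fact that the event in question is monotone in $\xi$ (a ball of radius $\dd^{\xi_0}$ contains one of radius $\dd^\xi$ when $\xi\ge\xi_0$ since $\dd<1$, so it suffices to treat $\xi$ in a fixed neighbourhood of $1$), and a union bound over dyadic $\dd\le\ee$, over a $\dd$-net of $B(0,R)$ and over the logarithmically many dyadic diameter ranges, the lemma reduces to a unit-scale estimate: \emph{for every $n\in\N$ there is $C_n>0$ such that, if $A$ is a cell of a whole-plane space-filling $\SLE_{\kk'}$ through a fixed point with $\diam(A)\asymp1$, then $\p[\,A\text{ contains no ball of radius }r\,]\le C_n r^n$ for all $r\in(0,1)$} (the passage from cells through a fixed point to arbitrary sub-intervals $\eta_w'([a,b])$ being a further routine union bound). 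The polynomially many (net point, scale) pairs cost only a polynomial factor, which is absorbed by the faster-than-polynomial unit-scale bound, and summing over dyadic $\dd\le\ee$ yields $\p[(E^w_{\ee,\xi,R})^c]\le C_{n,\xi,R}\,\ee^n$.

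\emph{The multi-scale estimate.} Suppose $A$ has diameter $\asymp1$ and no inscribed ball of radius $r$; then (as $A$ is the closure of its interior) $A\subseteq N_r(\partial A)$, and, picking a point $z^\ast\in A$ with $\dist(z^\ast,\partial A)$ maximal, $\partial A$ comes within $r$ of $z^\ast$. Consequently, for \emph{every} dyadic scale $\rho\in(r,1)$ the ball $B(z^\ast,\rho)$ is not contained in $A$, i.e.\ one of the $\SLE_\kk$-type boundary arcs of $A$ dips into $B(z^\ast,\rho)$. Conditionally on the coarse field $h$ and on the boundary arcs of $A$ down to scale $\rho$, the portions of those arcs inside $B(z^\ast,\rho)$ behave like fresh $\SLE_\kk$'s in a smaller domain and the increment of the quantum mass at scale $\rho$ is a fresh Brownian-type increment; an $\SLE_\kk$ one-point estimate, combined with the two-sided Euclidean/quantum comparison, then bounds the conditional probability of the scale-$\rho$ dip by $\rho^c$ for some fixed $c>0$, and the dips over the $\asymp\log_2(1/r)$ dyadic scales between $r$ and $1$ are, up to a bounded multiplicative factor, mutually independent. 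Summing also over $z^\ast$ in an $r$-net, we get $\p[\,A\text{ has inradius}<r\,]\lesssim r^{-2}\prod_{k=1}^{\lfloor\log_2(1/r)\rfloor}2^{-ck}\le r^{-2}\exp\!\big(-\tfrac{c}{2}(1+o(1))(\log_2(1/r))^2\big)$, which is $\le C_n r^n$ for every $n$.

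\emph{Main obstacle.} The heart of the matter is the per-scale estimate together with its decorrelation. One must (a) handle all of the flow-line arcs bounding $A$ at once, since their mutual behaviour is dictated by the flow-line interaction rules and by the winding and ``special point'' structure of these curves---much as in the proof of Lemma~\ref{lem:sfsle_pocket_interaction_834}---so that ``$B(z^\ast,\rho)\not\subseteq A$'' becomes a tractable event about $\SLE_\kk$'s at scale $\rho$; and (b) set things up so that the scale-$\rho$ events depend on essentially disjoint parts of the data, which is where the mating-of-trees Brownian motion and the near-independence of the $\mu_h$-masses of dyadic annuli for the log-correlated field $h$ are needed, and where one requires the Euclidean/quantum scale comparisons to hold uniformly over all scales. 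The reduction, the union bounds and the monotonicity in $\xi$ are routine.
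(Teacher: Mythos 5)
First, a point of order: the paper does not prove this lemma at all --- it is quoted from \cite[Proposition~3.4 and Remark~3.9]{ghm2020almost}, in the form given in \cite[Lemma~A.3]{miller2021tightness} --- so there is no in-paper proof to compare your argument against. Judged on its own terms, your sketch correctly assembles the toolbox of the cited proof (the flow-line description of cell boundaries, the mating-of-trees/LQG coupling, and tail bounds for the $\gg$-LQG area measure), and the reduction via scaling, monotonicity in $\xi$, and union bounds over nets and dyadic scales is routine and fine.

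The gap is in the ``multi-scale estimate'', which is where the entire content of the lemma lives. You claim that, conditionally on the data down to scale $\rho$, the probability that a boundary arc of $A$ dips into $B(z^\ast,\rho)$ is at most $\rho^{c}$, and that multiplying over the dyadic scales between $r$ and $1$ gives $\prod_{k}2^{-ck}\le\exp(-\tfrac{c}{2}(1+o(1))(\log_2(1/r))^2)$. This is asserted, not proved, and as stated it conflates two different quantities. For a fixed candidate centre $z$ the events $\{B(z,\rho_k)\not\subseteq A\}$ are nested (the smallest scale implies all the larger ones), so their intersection has the probability of the smallest one; what your product decomposition actually requires is the \emph{conditional} probability of penetrating one scale deeper given penetration to the previous scale, and for SLE-type curves this conditional probability is generically a constant bounded away from $0$ and $1$ --- this is exactly how one derives the one-point estimate $\p[\dist(z,\eta)\le\rho]\asymp\rho^{2-d}$, which is a \emph{fixed} power of $\rho$. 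A constant per scale over $\asymp\log_2(1/r)$ scales yields only $r^{c'}$, which after the $r^{-2}$ net union bound need not even be summable, and certainly does not decay faster than every power of $r$. Multiplying the \emph{unconditional} one-point bounds $\rho_k^{c}=2^{-ck}$, as you do, treats nested events as independent. The superpolynomial decay in \cite{ghm2020almost} comes from a different mechanism: the lower tail of the LQG area measure of a ball decays like $\exp(-c(\log(1/\ee))^2)$ (a Gaussian tail of the field, uniformly over dyadic balls), and this is transferred to the ``cell contains a ball'' statement through the quantum-mass parameterisation and its H\"older continuity. You list precisely these inputs in your preamble but never deploy them in the estimate; to repair the argument you would need to replace the per-scale $\rho^{c}$ claim with a genuine appeal to those LQG tail bounds, since I do not see how to obtain a per-scale conditional probability that decays in $k$ by SLE arguments alone.
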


That is, \cite[Proposition~A.2]{miller2021tightness} adapts Lemma~\ref{lem:ghm_lemma} from the case of whole-plane space-filling SLE to the case of a space-filling SLE loop in the disk, but leads only to an almost sure result. We wish to also transfer the faster than polynomial decay rate of Lemma~\ref{lem:ghm_lemma} to the space-filling loop case.
To do so, our proof will broadly follow the proof of \cite[Proposition~A.2]{miller2021tightness}, but getting the required decay rate for $\p[E_{\ee, \xi}^c]$ will require some extra care.

Before proving the proposition, we first prove a lemma on the absolute continuity of the laws of the whole-plane and zero-boundary GFFs.
Our proof is based on \cite[Lemma~4.1, Lemma~4.4]{mq2020geodesics}, but we include the full argument here.

\begin{lemma}\label{lem:gff_ac}
Let $L > 0$ and $r \in (0,1)$. Let $h^0_\D$ be a zero boundary GFF on $\D$, let $f$ be a harmonic function on $\D$ with $\nn{f}_\infty \leq L$ and define $h = h^0_\D + f$. Let $h_w$ be a whole-plane GFF with additive constant fixed so that its average on $\del \D$ is uniform in $[0, 2\pi\chi$) (as defined in Section~\ref{subsec:gff}). Then the laws of $h$ and $h_w$ restricted to $B(0,r)$ are mutually absolutely continuous. Furthermore, if we denote by $\CZ$ the Radon--Nikodym derivative of the law of $h|_{B(0,r)}$ with respect to the law of $h_w|_{B(0,r)}$, then there exists $p \in (1,\infty)$ such that $\E[\CZ^p] < \infty$.
\end{lemma}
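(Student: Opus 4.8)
The plan is to establish the absolute continuity by decomposing both fields on the annulus between $B(0,r)$ and $\partial \D$, using the Markov property of the GFF to reduce everything to a comparison of the harmonic parts, and then control the Radon--Nikodym derivative by a Cameron--Martin / Girsanov-type argument. The key observation is that the law of the zero-boundary GFF $h^0_\D$ on $\D$ and the law of the whole-plane GFF $h_w$ both decompose, via the Markov property applied to the closed set $\ov{B(0,r')}$ for some radius $r \in (r', 1)$, as (a harmonic function on the complement) plus (an independent zero-boundary GFF on $\D \setminus \ov{B(0,r')}$); restricted to $B(0,r)$, each field is of the form (harmonic function on the annulus $\D \setminus \ov{B(0,r')}$, restricted to $B(0,r) \setminus \ov{B(0,r')}$, plus its boundary values extended harmonically inside $B(0,r')$) plus (a zero-boundary GFF piece). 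Since a zero-boundary GFF on the annulus is common to both, the comparison of laws on $B(0,r)$ is governed entirely by the comparison of the laws of the two harmonic extensions, which are Gaussian processes (plus the deterministic shift $f$ in the case of $h$), so the Radon--Nikodym derivative is an explicit Gaussian density ratio.

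First I would fix an intermediate radius, say $\rho_0 \in (r,1)$, and apply the GFF Markov property to write $h_w = \mathfrak{h}_w + \mathring{h}_w$ where $\mathfrak{h}_w$ is harmonic on $B(0,\rho_0)$ (with the additive constant on $\partial\D$ fixed as in the statement, this is genuinely a random distribution, not just modulo constants, once we record the circle average $h_w(\rho_{0,\rho_0})$) and $\mathring h_w$ is an independent zero-boundary GFF on $B(0,\rho_0)$; similarly $h = h^0_\D + f$, and by the Markov property $h^0_\D = \mathfrak h^0 + \mathring h^0$ on $B(0,\rho_0)$ with $\mathring h^0$ a zero-boundary GFF on $B(0,\rho_0)$ and $\mathfrak h^0$ harmonic there. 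So on $B(0,r) \subset B(0,\rho_0)$ we have $h = (\mathfrak h^0 + f) + \mathring h^0$ and $h_w = \mathfrak h_w + \mathring h_w$, both sums of a harmonic function and an independent zero-boundary GFF on $B(0,\rho_0)$ restricted to $B(0,r)$. The zero-boundary GFF pieces have the same law, so the Radon--Nikodym derivative $\CZ$ of $h|_{B(0,r)}$ with respect to $h_w|_{B(0,r)}$ equals (the conditional expectation, given the data on $B(0,r)$, of) the Radon--Nikodym derivative of the law of the harmonic function $\mathfrak h^0 + f$ with respect to that of $\mathfrak h_w$, as random harmonic functions on $B(0,\rho_0)$. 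Both $\mathfrak h^0$ and $\mathfrak h_w$ are Gaussian in the sense that, parametrized by their restriction to $\partial B(0,\rho_0)$ (equivalently, by their boundary circle-average process / Fourier coefficients on the circle $|z|=\rho_0$), they are Gaussian processes with explicit, comparable covariance structures — indeed both have the covariance of the GFF restricted to that circle, differing only by the treatment of the constant mode (for $h^0_\D$ the constant mode of the harmonic extension has a certain finite variance coming from the Green's function, while for $h_w$ with our normalization it is uniform on $[0,2\pi\chi)$, hence we actually compare against, say, a Gaussian with large variance or handle the constant mode separately). The upshot is that $\CZ$ is, up to the harmless constant-mode contribution, a Gaussian exponential martingale of the form $\exp(\langle \xi, X\rangle - \tfrac12 \|\xi\|^2)$ where $X$ is a Gaussian vector and $\xi$ encodes both the mean shift from $f$ and the covariance mismatch; such a quantity has finite $p$-th moment for all $p$ for which the tilted measure is still a probability measure, which here holds for $p$ in a nonempty interval $(1,\infty)$ precisely because $\|f\|_\infty \le L$ bounds the relevant Cameron--Martin norm uniformly.

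To make the $L^p$ bound clean I would actually invoke Kakutani's theorem / the standard fact that for two Gaussian measures $\mu_1 \ll \mu_2$ on a Hilbert space with $\frac{d\mu_1}{d\mu_2} = e^{F}$, one has $\E_{\mu_2}[e^{pF}] < \infty$ for $p$ in an interval determined by the operator norm of $\mathrm{Cov}(\mu_2)^{-1}\mathrm{Cov}(\mu_1) - I$ and by the Cameron--Martin norm of the mean difference; here the mean difference is (essentially) $f$ restricted to the circle, whose relevant norm is controlled by $\|f\|_\infty \le L$ (via the maximum principle and the fact that the Dirichlet energy of the harmonic extension of a bounded boundary datum on a fixed annulus is bounded in terms of its sup norm), and the covariance operators differ only in finitely many low modes by a bounded amount. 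This yields $p \in (1, p_0]$ for some $p_0 = p_0(L, r) > 1$, which is all that is claimed. One clean alternative, avoiding spelling out the Gaussian-measure machinery, is to cite \cite{mq2020geodesics} directly — the statement says the proof is based on \cite[Lemma~4.1, Lemma~4.4]{mq2020geodesics} — and reproduce only the short extra step needed to incorporate the bounded harmonic function $f$, namely that adding a deterministic element of sup-norm at most $L$ to the field changes the Radon--Nikodym derivative by an $L^p$-bounded Gaussian factor (a Cameron--Martin shift), so the $L^p$ integrability is preserved with a possibly smaller exponent.

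**The hard part will be** bookkeeping the constant mode of the whole-plane GFF: because $h_w$ is only defined modulo constants and we have normalized the $\partial\D$-average to be uniform on $[0,2\pi\chi)$ while $h = h^0_\D + f$ has a genuinely pinned-down behaviour, the naive covariance comparison fails on the zero Fourier mode and one must either (i) condition on the circle-average $h_w(\rho_{0,\rho_0})$, compare the remaining (non-constant) modes where the two Gaussian structures agree up to a bounded perturbation, and separately argue that the constant-mode density ratio — a ratio of a Gaussian density to a uniform density over an interval of length $2\pi\chi$, after conditioning — is bounded above by a constant (hence contributes a bounded, in particular $L^p$, factor) on the relevant range; or (ii) follow \cite{mq2020geodesics} and instead compare on $B(0,r)$ directly where, once restricted away from $\partial\D$, the uniform constant washes into an ordinary Gaussian-type fluctuation. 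I expect this constant-mode comparison, together with verifying the precise covariance-mismatch bound on the annulus is uniform in the radii, to be the only genuinely delicate point; everything else is the standard Markov-property-plus-Cameron--Martin package, and the sup-norm bound $\|f\|_\infty \le L$ is exactly what makes the exponent $p$ stay bounded away from $1$.
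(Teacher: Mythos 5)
Your overall strategy (Markov decomposition plus Cameron--Martin shift) is in the right spirit, but the route you choose has two genuine gaps, both of which the paper's proof avoids by a simple device you did not use.

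\textbf{Gap 1: your covariance claim is false.} You assert that the harmonic parts $\mathfrak h^0$ (from the zero-boundary GFF on $\D$) and $\mathfrak h_w$ (from the whole-plane GFF), viewed as random harmonic functions on $B(0,\rho_0)$, have covariances that differ ``only by the treatment of the constant mode.'' This is not true. On $\del B(0,\rho_0)$, the covariance of $\mathfrak h^0$ is $G_\D(x,y)-G_{B(0,\rho_0)}(x,y)$ while that of $\mathfrak h_w$ is $G_\C(x,y)-G_{B(0,\rho_0)}(x,y)$ (modulo the additive-constant ambiguity); their difference is $G_\D(x,y)-G_\C(x,y)=\log|1-x\bar y|$, which is genuinely $(x,y)$-dependent and hits every Fourier mode, not just the constant one. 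So the Gaussian-measure comparison you want to invoke is not the near-trivial constant-mode adjustment you describe.

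\textbf{Gap 2: the Feldman--H\'ajek/Kakutani input is not supplied.} Even granting your (correct) identity $\CZ = \E[\,d\mu_{\mathfrak h^0+f}/d\mu_{\mathfrak h_w}(\mathfrak h_w) \giv h_w|_{B(0,r)}]$, it presupposes that the laws of $\mathfrak h^0+f$ and $\mathfrak h_w$ are mutually absolutely continuous as infinite-dimensional Gaussian measures. That requires the Feldman--H\'ajek conditions (Cameron--Martin mean shift and Hilbert--Schmidt covariance compatibility), and in light of Gap 1 this is not a triviality; you do not verify it, and the $L^p$ exponent would then depend on spectral information you have not estimated.

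\textbf{What the paper does instead.} The paper couples $h$ and $h_w$ to share a common zero-boundary GFF $h^0$ on a larger ball $B(0,R)$, with independent harmonic parts $\fh,\fh_w$, and then multiplies the harmonic difference by a smooth cutoff $\phi$ supported in $B(0,r_1)\subset B(0,R)$ and equal to $1$ on $B(0,r)$, setting $g=(\fh-\fh_w)\phi$. Because $g$ is compactly supported, it lies in $H(B(0,R))$, and the Radon--Nikodym derivative conditionally on $(\fh,\fh_w)$ is the ordinary Cameron--Martin density $\exp((h^0,g)_\nabla-\tfrac12(g,g)_\nabla)$; taking $\E[\cdot\giv h_w|_{B(0,r)}]$ and then Jensen reduces everything to bounding $\E[\exp(c\|g\|_\nabla^2)]$. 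That bound is then obtained from harmonic-function gradient estimates on nested balls (reducing $\|g\|_\nabla$ to sup norms of $\fh,\fh_w$) together with the Poisson-kernel representation and Gaussian tail estimates for the circle averages, with the uniform constant of $h_w$ contributing only a bounded additive term. This completely sidesteps the question of whether the laws of $\fh$ and $\fh_w$ are themselves mutually absolutely continuous, so neither of your two gaps arises. Your proof would need to fill in the Feldman--H\'ajek machinery and a correct description of the covariance mismatch; the paper's cutoff trick is the simpler and more robust route.
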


\begin{proof}
Choose $r < r_1 < r_2 < r_3 < R < 1$. Using the Markov property of the GFF we can couple $h$ and $h_w$ so that $h = \fh + h^0$ and $h_w = \fh_w + h^0$, where $h^0 \equiv h_R^0$ is a zero-boundary GFF on $B(0,R)$ and $\fh$ and $\fh_w$ are distributions which are harmonic functions on $B(0,R)$, each independent of $h^0$. 
We assume the coupling is such that $\fh$ and $\fh_w$ are independent (note that we have also assumed that the projections of $h$ and $h_w$ onto $H(B(0,R))$ (as defined in \cite{ms2016imag1}) are the same).
Let $\phi$ be a smooth function which is equal to $1$ on $B(0,r)$ and $0$ outside $B(0,r_1)$, and set $g = (\fh - \fh_w)\phi$. Writing $B \equiv B(0,r)$ for convenience, note that $h|_B = h_w|_B + g$.
By e.g.\ \cite[Proposition~1.51]{bp_gff}, the Radon--Nikodym derivative of the conditional law of $h^0 + g$ given $(\fh, \fh_w)$ (equivalently given $g$) with respect to the law of $h^0$ is given by $\CZ_1 = \exp((h^0, g)_\nabla - \frac12(g,g)_\nabla)$.
It follows that the Radon--Nikodym derivative for the joint law of $(h|_B, \fh, \fh_w)$ with respect to the joint law of $(h_w|_B, \fh, \fh_w)$ is also given by $\CZ_1$. Then, the Radon--Nikodym derivative of $h|_B$ with respect to $h_w|_B$ is $\cZ = \E[\cZ_1 \giv h_w|_B] = \E[\exp((h^0, g)_\nabla - \frac12(g,g)_\nabla) \giv h_w|_B]$, where $h, h_w$ are coupled as above.

By Jensen's inequality,
\begin{align*}
\E[\cZ^p] &= \E\left[\E\big[\exp((h^0, g)_\nabla - \tfrac12(g,g)_\nabla) \big{|} h_w|_B\big]^p\right]\\
&\leq \E\left[(\exp((h^0, g)_\nabla - \tfrac12(g,g)_\nabla))^p\right]\\
&= \E\big[\exp(-\tfrac{p}{2}(g,g)_\nabla)\, \E[\exp(p(h^0, g)_\nabla)\giv g]\big]\\
&= \E\left[\exp\left(\frac{p^2 - p}{2} \nn{g}_\nabla^2 \right)\right],
\end{align*}
where in the last equality we have used the fact that conditional on $g$, $(h^0,g)$ is a centred Gaussian random variable with variance $\nn{g}^2_\nabla$.

By the product rule, there exists a constant $C_1$ (depending on our choice of $\phi$) such that
\[\nn{g}_\nabla^2 \leq C_1\left(\nn{\fh}_{\infty, B(0, r_1)}^2 + \nn{\fh_w}_{\infty, B(0, r_1)}^2 + \nn{\fh}_{\nabla, B(0, r_1)}^2 + \nn{\fh_w}_{\nabla, B(0, r_1)}^2 \right).\]
Here, $\nn{\cdot}_{\cdot, B(0,r_1)}$ denotes that the norm is to be taken on $B(0,r_1)$, which can be done since $\phi$ is supported on $B(0,r_1)$. 
By a standard harmonic function estimate (see e.g.\ \cite{evans_pdes}) there exists a universal constant $c > 0$ such that if $S > 0$ and $u$ is harmonic on $B(0, S)$ then for $w \in B(0, S)$ we have
\[\n{\nabla u(w)} \leq \frac{c}{\dist(w, \del B(0, S))} \sup_{v \in B(0,S)}\n{u(v) - u(0)}.\]
Applying this to $\fh, \fh_w$ with $S = r_2$ (so that $\dist(w, \del B(0, r_2))$ is bounded below by a positive constant for $w \in B(0,r_1)$) we find there exists a constant $C_2$ (depending on $r_1, r_2$ and $\phi$) such that (note that we are now computing the norms on a larger set)
\[\nn{g}_\nabla^2 \leq C_2\left(\nn{\fh}_{\infty, B(0, r_2)}^2 + \nn{\fh_w}_{\infty, B(0, r_2)}^2 \right).\]
We conclude that 
\begin{equation}\label{eq:rn_bound}
\E[\cZ^p] \leq \E\left[\exp\left(\frac{p^2 - p}{2} \nn{g}_\nabla^2 \right)\right] \leq \E\left[\exp\left(\frac{p^2 - p}{2} C_2\left(\nn{\fh}_{\infty, B(0, r_2)}^2 + \nn{\fh_w}_{\infty, B(0, r_2)}^2 \right)\right)\right].
\end{equation}

Next, we argue as in \cite[Lemma~4.4]{mq2020geodesics} to show that there exists $\la > 0$ such that 
\[\E[\exp(\la \nn{\fh}_{\infty,B(0,r_2)}^2)] < \infty,\]
and the same for $\fh_w$ in place of $\fh$. To do so, let $\fp(\cdot, \cdot)$ denote the Poisson kernel on $B(0, r_3)$. Note that for $z \in B(0,r_2), w \in B(0, r_3)$, the kernel $\fp(z,w)$ is bounded by some constant $C_3$ (depending on $r_2, r_3$). So we have, for all $z \in B(0,r_2)$.
\[ \n{\fh(z)} = \n{\int_{\del B(0,r_3)} \fp(z,w)\fh(w)dw} \leq C_3\int_{\del B(0,r_3)}\n{\fh(w)}dw.\]
Hence the same bound holds for $\nn{\fh}_{\infty, B(0,r_2)}$. By Jensen's inequality we have
\[\exp\left(\la\nn{\fh}^2_{\infty, B(0,r_2)}\right) \leq \int_{\del B(0,r_3)} \exp\left(\la C_3^2 \n{\fh(w)}^2\right)dw,\]
and hence that
\begin{equation}\label{eq:poisson_bound}
\E\left[\exp\left(\la\nn{\fh}^2_{\infty, B(0,r_2)}\right)\right] \leq \int_{\del B(0,r_3)} \E\left[\exp\left(\la C_3^2 \n{\fh(w)}^2\right)\right]dw.
\end{equation}
But for $w \in \del B(0, r_3)$, $\fh(w)$ is normal random variable with mean determined by $f$  and variance which depends on $r_3$ and $R$, but is the same for all $w$. It follows that there exists $\la > 0$ small enough that this expectation is finite. 
The same argument as above shows that \eqref{eq:poisson_bound} also holds for $\fh_w$ in place of $\fh$.
We have normalised $h_w$ so that its average on $\del \D$ is uniform in $[0, 2\pi\chi)$, but if we let $\ov{h}_w$ be a whole-plane GFF normalised so that this average is $0$, we have $h_w = \ov{h}_w + U$, where $U$ is a uniform random variable on $[0, 2\pi\chi)$. Therefore it follows that for $v \in \del B(0,r_3)$, $\fh_w(v)$ is the sum of a normal random variable (with fixed variance) and a uniform random variable, and that the integral is finite also in this case for small enough $\la > 0$.
To conclude, since $p^2 - p$ can be made arbitrarily close to $0$ for $p > 1$, it follows from \eqref{eq:rn_bound} that for $p > 1$ small enough, $\E[\cZ^p] < \infty$, as claimed.
\end{proof}

\begin{proof}[Proof of Proposition~\ref{prop:area_filling}]
\textit{Setup and overview.}
We will first give an overview of our proof. In the first step, we will use the absolute continuity of the zero boundary GFF on $\D$ with respect to a whole-plane GFF, when each is restricted to $B(0,1/2)$. This will allow us to convert the area-filling property of whole-plane space-filling SLE stated in Lemma~\ref{lem:ghm_lemma} to a similar area-filling property for the segments of a space-filling SLE loop $\eta'$ which are contained in $B(0,1/2)$. In Step 2, we will use conformal invariance properties of $\eta'$ and conformal mapping distortion estimates to prove an area-filling property for segments of $\eta'$ in balls $B(z, r)$, where $B(z, 2r) \subseteq \D$. 
In Step 3 we will use a union bound to define an event $H_{\ee, \xi, n}$ on which $\eta'$ satisfies an area-filling property in balls of all scales simultaneously. Intuitively, this means we have control on the area-filling behaviour of segments of $\eta'$ which have small diameter relative to their distance to the boundary (we do not make this statement precise in the proof, but it is a useful way to view what we know at this stage). 
It remains to consider the behaviour of segments of $\eta'$ which are close to the boundary, relative to their diameter. In the fourth step, we show that it is unlikely that $\eta'$ travels a large distance along $\del \D$ without moving away from the boundary. In Step 5, we show how Steps 3 and 4 allow us to understand the area-filling behaviour of all segments of $\eta'$ simultaneously. The sixth step is a short conclusion.

We first define the following event. For $z \in \D, r > 0$ such that $B(z, r) \subset \D$, let $G_{\ee, z, r, \xi}$ be the event that for all $\dd \in (0, \ee]$ and all $0 \leq a < b < \infty$ with $\eta'([a,b]) \subseteq B(z,r)$ and $\diam(\eta'([a,b])) \geq \dd$, the set $\eta'([a,b])$ contains a ball of radius $\dd^\xi$.

\textit{Step 1. An area-filling property on $B(0,1/2)$.}
Let $h$ be a GFF on $\D$ with boundary conditions given by $\la'$ to the left of $-i$ and $\la' - 2\chi\pi$ to the right, and suppose that $\eta'$ is coupled to $h$ as described in Section~\ref{sec:sfsle}.
The first step of the proof is to use the absolute continuity of $h|_{B(0, 1/2)}$ with respect to a whole-plane GFF to convert Lemma~\ref{lem:ghm_lemma} to a result about space-filling SLE on the disk, at least when it is far away from the boundary. 
To this end, let $h_w$ be a whole-plane GFF, normalised so that its average on $\del \D$ is uniform in $[0, 2\pi\chi)$, and let $\eta'_w$ be a whole-plane $\SLE_{\kk'}$ coupled to it.
Then, fixing $R = 1/2$ in Lemma~\ref{lem:ghm_lemma} (we continue to write $R$ for notational reasons), we have that for a fixed choice of $\xi, n$, there exists $C_{n,\xi,R} > 0$ such that $\p[(E^w_{\ee, \xi, R})^c] \leq C_{\ee, \xi, R}\,\ee^n$ for all $\ee > 0$.
By the definition of the coupling of $\eta_w'$ to $h_w$, $\eta'_w$ is a measurable function of the field, and the event $E^w_{\ee, \xi, R}$ can be seen to be measurable with respect to $h_w|_{B(0, 1/2)}$. Similarly, $\eta'$ is a measurable function of $h$, and $G_\ee$ is measurable with respect to $h|_{B(0, 1/2)}$. 
In fact, the measurable functions $h|_{B(0, 1/2)} \mapsto \one_{G_\ee}$ and $h_w|_{B(0, 1/2)} \mapsto \one_{E^w_{\ee, \xi, R}}$ coincide,\footnote{What we mean here is the following. For $z \in \D$, the flow line $\eta_z$ (resp.\ $\eta^w_z$) of $h$ (resp.\ $h^w$) started from $z$ and stopped when it first hits time it leaves $B(0,1/2)$ is a measurable function of $h|_{B(0,1/2)}$ (resp.\ $h^w|_{B(0,1/2)}$). 
If we denote these two functions by $H$ and $H^w$, which map from the set of distributions on $B(0,1/2)$ to some space of curves, then the maps $H$ and $H^w$ must coincide almost everywhere (with respect to the law of $h|_{B(0,1/2)}$). This follows from the definition of the flow line coupling in \cite{ms2017ig4}.
The events $G_\ee$ and $E^w_{\ee,\xi, R}$ depend on $h|_{B(0,1/2)}$ and $h^w|_{B(0,1/2)}$ respectively through collections of flow lines (of angle $\pm\pi/2$) $(\eta_z)$ and $(\eta_z^w)$ stopped when they first exit $B(0,1/2)$, where $z$ ranges over $\D \cap \Q^2$. It follows then that if $h|_{B(0,1/2)} = h^w|_{B(0,1/2)}$, then $G_\ee$ holds if and only if $E^w_{\ee,\xi, R}$ holds (up to a probability $0$ events), meaning that the maps $h|_{B(0, 1/2)} \mapsto \one_{G_\ee}$ and $h_w|_{B(0, 1/2)} \mapsto \one_{E^w_{\ee, \xi, R}}$ coincide (up to a probability $0$ event---we can then modify one of the maps on this measure $0$ event so that they coincide everywhere).} so if we let $\CZ$ denote the Radon--Nikodym derivative of the law of $h|_{B(0, 1/2)}$ with respect to $h_w|_{B(0, 1/2)}$, then $\p[G_\ee] = \E[\CZ \one_{E^w_{\ee, \xi, R}}]$. 
By Lemma~\ref{lem:gff_ac} there exists $p > 1$ such that $\E[\CZ^p] < D_1$, where $D_1$ depends only on $\kk'$ (via the boundary conditions of $h$ on $\D$, which in this case depend only on $\kk'$). Therefore, by H\"older's inequality, with $1/p + 1/q = 1$,
\[\p[G_\ee^c] = \E[\CZ \one_{(E^w_{\ee, \xi, R})^c}] \leq \E[\CZ^p]^{1/p}\,\p[(E^w_{\ee, \xi, R})^c]^{1/q} \leq (D_1^{1/p} C_{n,\xi, R}^{1/q})\,\ee^{n/q}.\]
It follows that for each $n \in \N, \xi > 1$, there exists a constant $C_{n,\xi} > 0$ such that for all $\ee > 0$,
\begin{equation}
    \p[G_{\ee, 0, 1/2, \xi}^{c}] \leq C_{n,\xi}\ee^n.
\end{equation}

\textit{Step 2. Extending this bound to any ball away from the boundary.}
Fix $z \in \D, r > 0$ such that $B(z, 2r) \subseteq \D$ and let $\ff\colon \D \to \D$ be the conformal map sending $z$ to $0$ and fixing $-i$.
Using an explicit expression for $\ff$, we can compute $\n{\ff'(z)} = 1/(1-\n{z})^2$ and show that if $w \in B(z,r)$ then $\n{\ff(w)} \leq 1/2$, meaning that $\ff(B(z,r)) \subseteq B(0,1/2)$. For $w \in B(z,r)$, using \cite[Theorem~3.21]{lawler2008conformally} on $\psi := \ff\nv$ and that $\n{\ff(w)} \leq 1/2$, we get that there exist $d_1 = 1/12$ and $d_2 = 27/4$ such that $d_1 \n{\ff'(z)} \leq \n{\ff'(w)} \leq d_2\n{\ff'(z)}$. Similarly, for $v_1, v_2 \in B(0, 1/2)$ we can compute that
\begin{equation}\label{eq:distortion_bounds_1}
    \frac14(1-\n{z}^2)\n{v_2 - v_1} \leq \n{\psi(v_2) - \psi(v_1)} \leq 4(1-\n{z}^2)\n{v_2 - v_1}.
\end{equation}
Since $\ff = \psi\nv$, we correspondingly have for $u_1, u_2 \in B(z,r)$, where also using that $1 - \n{z}^2 \geq r$,
\begin{equation}\label{eq:distortion_bounds_2}
    \frac{1}{4(1-\n{z}^2)}\n{u_2 - u_1} \leq \n{\ff(u_2) - \ff(u_1)} \leq \frac{4}{1-\n{z}^2}\n{u_2 - u_1} \leq \frac4r \n{u_2 - u_1}.
\end{equation}

By the conformal invariance of the curve $\eta'$, its image $\eta'_z := \psi(\eta')$ has the same law as $\eta'$ (up to time reparameterisation; for this argument we will parameterise $\eta'_z$ as $\eta'_z(t) = \psi(\eta'(t))$).

Fix $\xi > 1$ and choose $\ee > 0$ small enough that $\ee^{\xi - 1} < 1/16$. Suppose that the event $G_{\ee, 0, 1/2, \xi}$ holds for the curve $\eta'$. We will show that $G_{\ee', z, r, \xi_1}$ holds for the curve $\eta'_z$ where $\xi_1 = 2\xi - 1$ and $\ee' = r\ee/4$.
Indeed, suppose $0 \leq a < b < \infty, \eta'_z([a,b]) \subseteq B(z,r)$ and $\diam(\eta'_z([a,b])) = \dd \in (0, \ee']$. Using \eqref{eq:distortion_bounds_2} and the fact that $\n{\ff'(z)} = 1/(1-\n{z})^2$, we have 
\[\frac{1}{4} \n{\ff'(z)}\dd \leq \diam(\eta'[a,b]) \leq \frac{4\dd}{r}.\]
Since $\dd \in (0, \ee']$ we have $\diam(\eta'[a,b]) \leq 4\ee'/r = \ee$, and since $G_{\ee, 0, 1/2, \xi}$ holds, $\eta'([a,b])$ must contain a ball of radius $(\dd\n{\ff'(z)}/4)^{\xi}$. Upon mapping back with $\psi$ and using \eqref{eq:distortion_bounds_1}, we see that $\eta'_z$ contains a ball of radius at least
\[\frac{1}{4}\n{\ff'(z)}\nv(\dd\n{\ff'(z)}/4)^{\xi} = \frac{1}{4^{1+\xi}}\n{\ff'(z)}^{\xi-1}\dd^{\xi} \geq \frac{1}{16}\dd^{\xi} \geq \dd^{\xi + (\xi - 1)} = \dd^{\xi_1},\]
Here we have used that $\xi > 1, |\ff'(z)| > 1$ and that $\dd \leq \ee' < \ee$ so $\dd^{\xi - 1} < 1/16$.
Therefore the event $G_{\ee', z, r, \xi_1}$ holds.
In conclusion, for $\ee' = r\ee/4$ and $\xi_1 = 2\xi - 1$, since $\eta'$ and $\eta'_z$ have the same law, $\p[G_{\ee', z, r, \xi_1}] \geq \p[G_{\ee, 0, 1/2, \xi}]$, meaning that for all $n, \xi$, and for all $\ee > 0$ small enough that $\ee^{\xi-1} < 1/16$,
\[\p[G_{\ee', z, r, \xi_1}^c] \leq C_{n,\xi} \ee^n.\]

\textit{Step 3. Considering the area-filling properties of $\eta'$ on many such balls simultaneously.}
Let $S_m = 2^{-m}\Z^{2} \cap B(0, 1 - 2^{-m+1})$ and note that $\n{S_m} \leq 4\cdot2^{2m}$. Then for $z \in S_m$ and $r = 2^{-m}$, we have $B(z, 2r) \subseteq \D$. Set $\ee_{m,n} = \tfrac14 2^{-m}2^{-3m/n}\ee$, so that $\ee_{m,n} = \frac14 r \wt\ee_{m,n}$ for $\wt\ee_{m,n} = 2^{-3m/n}\ee$, meaning that by Step 2 (as long as $\ee^{\xi - 1} < 1/16$) we have $\p[G_{\ee_{m,n},z,2^{-m}, \xi_1}] \geq \p[G_{2^{-3m/n}\ee, 0, 1/2, \xi}]$.
Define, for $\xi > 1$ and $\ee > 0$,
\[H_{\ee, \xi, n} = \bigcap_{m \geq 1} \bigcap_{z \in S_m} G_{\ee_{m,n}, z, 2^{-m}, \xi}.\]
When $\ee^{\xi - 1} < 1/16$, by Step 2 and a union bound we have (note the use of $\xi_1 = 2\xi - 1$ here)
\begin{equation}
    \p[H_{\ee, \xi_1, n}^c] \leq \sum_{m \geq 1}2^{2m+2} \p[G^c_{2^{-3m/n}\ee, 0, 1/2, \xi}] \leq \sum_{m \geq 1} 2^{2m+2} C_{n,\xi} (2^{-3m/n}\ee)^n = 4C_{n,\xi}\ee^n.
\end{equation}
Finally, since $\xi_1 = 2\xi - 1$ we can do the above for any choice of $\xi_1 > 1$, so replacing $\xi_1$ by $\xi$ in our notation it follows that for any choice of $\xi > 1$ and $n \in \N$, there exists a constant $D_{n,\xi} > 0$ such that for all $\ee > 0$,
\begin{equation}
    \p[H_{\ee, \xi,n}^c] \leq D_{n,\xi}\,\ee^n.
\end{equation}

\begin{figure}[t]
    \centering
    \includegraphics[scale=1]{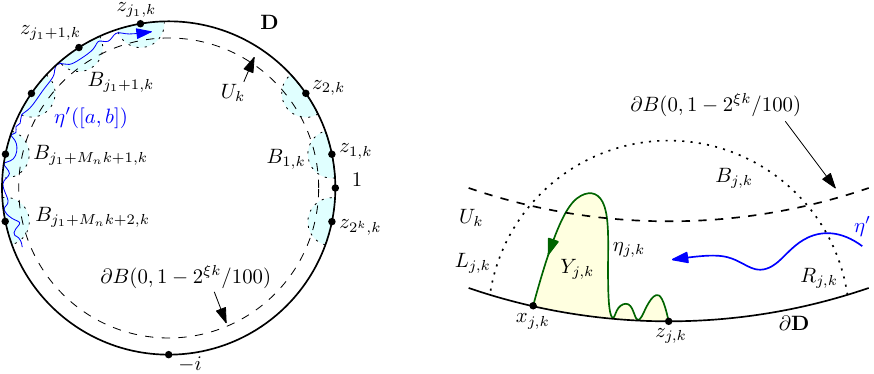}
    \caption{
    On the left we depict the points $z_{j,k} = \exp(2\pi i(j-\tfrac12)/2^k)$, the balls $B_{j,k} = B(z_{j,k}, 2^{-\xi k}/10)$ on which the events $F_{j,k}$ are defined, and the annulus $U_k := \D \sm B(0, 1 - 2^{-\xi k}/100)$, which are defined in Step 4 of the proof of Proposition~\ref{prop:area_filling}. On the right, we depict the event $F_{j,k}$.
    Notice that the distance between two consecutive points $z_{j,k}$ and $z_{j+1,k}$ is bounded above by $(2\pi)2^{-k}$.
    Therefore it follows that if $\diam(\eta'([a,b])) \geq C_n'k2^{-k} = 2\pi(M_n k + 4)$ and $\eta'([a,b]) \subset U_k$, then there must exist points $z_{j_1,k}$ and $z_{j_1+M_n k+2,k}$ (with indices taken modulo $2^k$) such that $\eta'([a,b])$ makes a crossing (in either the clockwise or counterclockwise direction) of the annular sector of the annulus $\D \sm B(0, 1 - 2^{-\xi k}/100)$ between $z_{j_1,k}$ and $z_{j_1+M_n k+2,k}$.
    This ensures that $\eta'([a,b])$ must make crossings (in either the clockwise or counterclockwise direction) of each of the sections of the annulus $U_k$ which are part of the $M_n$ consecutive balls $B_{j_1 + 1, k} \cap U_k, \dots, B_{j_1 + M_n k, k} \cap U_k$. 
    Since we have assumed that $A_{n,\xi, k}$ holds, it follows that there exists a point $z_{j,k}$ such that $F_{j,k}$ holds and $\eta'([a,b])$ makes a crossing of $B_{j,k} \cap U_k$. 
    Here, by a crossing of $B_{j,k} \cap U_k$, we mean that $\eta'$ makes a crossing from the segment of $\del (B_{j,k} \cap U_k)$ marked by $R_{j,k}$ in the right-hand figure to the segment marked $L_{j,k}$ (or from $L_{j,k}$ to $R_{j,k}$).
    On the event that $F_{j,k}$ occurs, to do so $\eta'$ must enter $Y_{j,k}$.
    But, as we argue in Step 4, $\eta'$ must fill in $Y_{j,k}$ in one go, forcing it to leave $U_k$ and causing a contradiction. (We have depicted $\eta'$ as a simple (non--space-filling) curve for clarity.)}
    \label{fig:fjk_events}
\end{figure}

\textit{Step 4. Showing $\eta'$ does not travel along the boundary for too long.}
Here, we mainly follow the proof of \cite[Proposition~A.2]{miller2021tightness} but make some small changes. 
Fix $\xi > 1$.
For each integer $k \geq 2$ let\footnote{We subtract the factor of $1/2$ from $j$ here so that none of our points $z_{j,k}$ are equal to $-i$, where $\eta'$ begins and ends, and where the boundary conditions of $h$ change.} $z_{j,k} = \exp(2\pi i(j-\tfrac12)/2^k)$ for $1 \leq j \leq 2^k$. Let $\eta_{j,k}$ be the flow line of $h$ of angle\footnote{\label{ftn:FL_angle}More precisely, if we consider a conformal map $\psi_{j,k} \colon \D \to \h$ sending $z_{j,k}$ to $0$, with branch of argument chosen such that $h_{j,k} = h \circ \psi_{j,k}\nv - \chi\arg(\psi_{j,k}\nv)'$ has boundary conditions given by $\la'$ on $\R$, then $\psi_{j,k}\nv(\eta_{j,k})$ is a flow line of angle $-\pi/2$ of $h_{j,k}$. This means that when $z_{j,k}$ is to the right of $-i$, where the boundary conditions of $h$ on $\del \D$ are close to $\la' - 2\chi\pi$, we need to account for this difference of $2\chi\pi$ by choosing the branch of $\arg$ accordingly. This leads to the flow line $\eta_{j,k}$ seeming like it has angle $3\pi/2$ rather than $-\pi/2$ in this case. This fundamentally stems from the fact that when we say a (chordal) flow line has angle $\tht$, this implicitly assumes a fixed choice of branch of argument (there is usually a single natural choice), and such a flow line could equally be said to have angle $\tht + 2\pi$, where the choice of branch of $\arg$ is now different. In certain cases like this one, however, there is no natural choice of branch of $\arg$, and we need to explicitly specify which branch we choose.} 
$-\pi/2$ started at $z_{j,k}$, targeted at $-i$, and stopped when it leaves $B_{j,k}:= B(z_{j,k}, 2^{-\xi k}/10)$. Let $F_{j,k}$ be the event that $\eta_{j,k}$ enters $B(0, 1 - 2^{-\xi k}/100)$ and afterwards hits $\del \D$ (at a point we call $x_{j,k}$) before leaving $B_{j,k}$
(see Figure~\ref{fig:fjk_events}).
Then, arguing as in \cite{miller2021tightness} there exists $p > 0$ (not depending on $k$) such that the collection of events $F_{j,k}$ is stochastically dominated by $2^k$ i.i.d.\ Bernoulli random variables with success probability $p$.
Therefore, for each $n \in \N$, there exists $M_n > 0$ such that the probability that there exists a consecutive sequence of length $M_nk$ of the $F_{j,k}$ (along $\del \D$) which all fail is at most $2^{-nk}$. 
Let $A_{n,\xi,k}$ be the event that no sequence of failures of length $M_nk$ exists for a fixed $k$, and let $B_{n,\xi, k} = \bigcap_{k' \geq k} A_{n,\xi, k'}$. It follows that $\p[B_{n,\xi, k}^c] \leq 2\cdot 2^{-nk}$.

Set $C_n' = 2\pi(M_n + 4)$. We claim that if $A_{n,\xi,k}$ occurs, and $\diam(\eta'([a,b])) \geq C'_{n}k2^{-k}$ for some $a < b \in \R$, then $\eta'([a,b])$ must leave the annulus $U_k := \D \sm B(0, 1 - 2^{-\xi k}/100)$.
To prove the claim, suppose that $A_{n,\xi,k}$ occurs, $\diam(\eta'([a,b])) \geq C'_{n}k2^{-k}$ and $\eta'([a,b])$ stays in $\D \sm B(0, 1 - 2^{-\xi k}/100)$. 
In Figure~\ref{fig:fjk_events} we explain that because $\diam(\eta'([a,b])) \geq C_n'k2^{-k}$ and since $A_{n,\xi,k}$ holds, there must exist a point $z_{j,k}$ such that $F_{j,k}$ holds and $\eta'([a,b])$ makes a crossing (from its clockwise boundary to its counterclockwise boundary or vice versa) of $U_k \cap B_{j,k}$.
If $F_{j,k}$ holds, let $Y_{j,k}$ be the region of $\D$ bounded between $\del \D$ and the segment of $\eta_{j,k}$ up until it hits $x_{j,k}$. 
Recall that $\eta_{j,k}$ is targeted at $-i$. Using the boundary conditions of $h$ one can compute that $\eta_{j,k}$ is an $\SLE_\kk(-\kk/2; \kk/2 - 2)$ from $z_{j,k}$ to $-i$ in $\D$ and therefore does not intersect the counterclockwise arc from $z_{j,k}$ to $-i$ a.s.\ by \cite[Lemma~2.1]{mw2017intersections} (we refer to \cite{ms2016imag1} for the definition of SLE with multiple force points). 
In particular, this means that $\eta_{j,k}$ always bounces off $\del \D$ in the clockwise direction.
It follows from the flow line interaction rules \cite[Theorem~1.7]{ms2017ig4} that the flow lines $\eta_w^\pm$ used to define the space-filling ordering a.s.\ do not cross $\eta_{j,k}$ and hence, arguing as in Lemmas~\ref{lem:sfsle_pocket_interaction_48} and~\ref{lem:sfsle_pocket_interaction_834}, we see that $\eta'$ a.s.\ fills in $Y_{j,k}$ in one go.
For $\eta'$ to make a crossing of $U_k \cap B_{j,k}$, it must enter $Y_{j,k}$ (since $F_{j,k}$ holds), and therefore must fill in all of $Y_{j,k}$ before leaving $B_{j,k}$. Since $F_{j,k}$ holds, $Y_{j,k}$ contains points in $B(0, 1 - 2^{-\xi k}/100)$, which is a contradiction, thus proving the claim.

\textit{Step 5. Determining area-filling properties for a general segment of $\eta'$.}
Fix $\xi > 1$ and $n \in \N$. Choose $K = K(\xi) \in \N$ large enough that $2C'_{n}k2^{-k} < 2^{-k/\xi}$ for all $k \geq K$. Assume $\ee \in (0, \ee_0)$, where $\ee_0 = \ee_0(\xi)$ depends only on $\xi$ (directly and through $K(\xi)$) and is small enough that $\ee_0 < C_n'K2^{-K}$ and $\ee_0^{\xi - 1} < 2^{-42\xi-4\xi^2}$.
Set $k_0 = \lceil \xi\log_2 (1/\ee)\rceil$. Note that $\ee < 2^{-K/\xi}$, meaning that $K < \xi\log_2(1/\ee)$, so $k_0 \geq K$.
Suppose that $H_{\ee,\xi,n}$ and $B_{n,\xi,k_0}$ both hold. We will show that for any choice of $q \in \N$, the event $E_{\ee_2, \xi_2}$ holds, where 
\[\ee_2 = \ee_2(\ee, q) = \ee^q, \qquad \xi_2 = \xi_2(\xi, n, q) = (\xi - 1) + \xi^3 + \frac{\xi}{q} + \frac{3\xi^3}{n}.\]
Indeed, suppose that $0 \leq a < b < \infty$ and $\diam(\eta'([a,b])) \geq \dd \in (0,\ee^q]$. Set $k = \lceil \xi\log_2(1/\dd)\rceil$, noting that $K \leq k_0 \leq k$. Then we have
\[C_n'k2^{-k} < 2^{-k/\xi} \leq \dd \leq \diam (\eta'([a,b])).\]
Since $B_{n,\xi,k_0}$ holds, $\eta'([a,b])$ must leave $\D \sm B(0, 1 - 2^{-\xi k}/100)$. It follows that $\eta'([a,b])$ must come within distance $(3/4)2^{-m}$ of some point $z \in S_m$ as long as $2^{-m} \leq 2^{-\xi k}/400$, say, which holds if $m = \lceil \xi k + 9 \rceil$. In particular, $\eta'([a,b])$ will contain a subsegment $\eta'([c,d])$ with $\eta'([c,d]) \subseteq B(z, 2^{-m})$ and $\diam (\eta'([c,d])) \geq 2^{-m-2}$.

Since $H_{\ee, \xi, n}$ holds, $G_{\ee_{m,n}, z, 2^{-m}, \xi}$ holds where $\ee_{m,n} = 2^{-m-2-(3m/n)}\ee$, as before. Therefore, $\eta'([c,d])$ must contain a ball of radius $\rho = \ee_{m,n}^{\xi}$. Since $m \leq \xi k + 10$, we have
\begin{equation}\label{eq:area_1}
\rho = \ee_{m,n}^\xi = \left(2^{-m-2-(3m/n)}\ee\right)^\xi \geq 2^{-(12 + 30/n)\xi}\cdot 2^{-(1 + 3/n)\xi^2 k} \cdot \ee^\xi.
\end{equation}
Now, $k \leq \xi\log_2(1/\dd) +1$, so
\begin{equation}\label{eq:area_2}
2^{-(1 + 3/n)\xi^2 k} \geq \dd^{\xi^3(1 + 3/n)}\cdot 2^{-\xi^2(1+3/n)}.
\end{equation}

Since $n \geq 1$ and $\ee^{\xi - 1} < 2^{-42\xi-4\xi^2}$ (by assumption), we have $2^{-(12 + 30/n)\xi}\cdot 2^{-\xi^2(1+3/n)} > \ee^{\xi - 1} > \dd^{\xi - 1}$. Also, we have assumed $\dd \leq \ee^q$. Combining these with \eqref{eq:area_1} and \eqref{eq:area_2}, we have
\[\rho \geq \dd^{\xi - 1}\cdot \dd^{\xi^3(1 + 3/n)} \cdot \dd^{\xi/q} = \dd^{\xi - 1 + \xi^3 + 3\xi^3/n + \xi/q}.\]
That is, $E_{\ee_2, \xi_2}$ holds, as claimed.

By Steps 3 and 4, using $k_0 = \lceil \xi\log_2 (1/\ee)\rceil$, we have
\begin{equation}
    \p[H_{\ee, \xi, n}^c \cup B_{n,\xi, k_0}^c] \leq D_{n,\xi}\ee^n + 2\cdot 2^{-n k_0} \leq (D_{n,\xi} + 2)\ee^n. 
\end{equation}
Therefore it follows that for all $\ee \in (0, \ee_0(\xi))$, and for any choice of $q \in \N$, we have $\p[E_{\ee_2, \xi_2}] \leq (D_{n,\xi} + 2)\ee^n$.

\textit{Step 6. Conclusion.}
Fix $\xi > 1$ and $n \in \N$. Choose first $\xi_0 > 1$, then $n_0 \in \N$ and $q \in \N$ such that
\[\xi_2(\xi_0,q,n) = (\xi_0 - 1) + \xi_0^3 + \frac{\xi_0}{q} + \frac{3\xi_0^3}{n_0} < \xi, \qquad  \frac{n_0}{q} \geq n.\]
Then for $\ee \in (0, \ee_0(\xi_0)^{q})$ we have 
\[\p[E_{\ee,\xi}^c] \leq \p[E_{\ee_2(\ee^{1/q}, q),\xi_2(\xi_0, n_0, q)}^c] \leq (D_{n_0, \xi_0} + 2)\ee^{n_0/q} \leq (D_{n_0,\xi_0}+2)\ee^n.\]
It follows that for every choice of $\xi > 1$ and $n \in \N$, there exists a constant $A_{n,\xi} > 0$ such that for all $\ee > 0$, we have $\p[E_{\ee, \xi}^c] \leq A_{n,\xi}\ee^n$, completing the proof.

\end{proof}

\section{Proof of main theorem}\label{sec:proof}

We first define the conformal radius. If $D \subsetneq \C$ is a simply connected domain and $z \in D$, we define the \textit{conformal radius of $D$ viewed from $z$}, denoted by $\confrad(z,D)$, by $|f'(z)|\nv$, where $f$ is any conformal map from $D$ to $\D$ sending $z$ to $0$.

\begin{proof}[Proof of Theorem~\ref{thm:diam}.]
In the following, we fix $\kk$ either in $(8/3, 4)$ or in $(4,8)$, and we define $\kk' = 16/\kk$ if $\kk \in (8/3,4)$, and $\kk' = \kk$ if $\kk \in (4,8)$. This choice of notation will allow us to prove both cases at once using the same argument, and only one step will differ slightly depending on which case we are in. For $z \in \D\sm\Uu$, let $s_z$ denote the connected component of $\D \sm \Uu$ which contains $z$. Since $z \notin \Uu$ a.s.\ $s_z$ is defined for (Lebesgue) almost all $z \in \D$. We can write
\[\sum_{s_k \in \CS} \diam(s_k)^2 = \int_\D \frac{\diam(s_z)^2}{\Area(s_z)}d\mu(z),\]
where $\mu$ denotes Lebesgue measure on $\D$. 
Taking expectations we get
\begin{equation}\label{eq:sum_to_integral}
\E\left[\sum_{k = 1}^\infty \diam(s_k)^2\right] = \int_\D \E\left[\frac{\diam(s_z)^2}{\Area(s_z)}\right]d\mu(z).
\end{equation}
We will show that $\diam(s_z)^2/\Area(s_z)$ has finite expectation for each $z \in \D$, and that this expectation is integrable over $\D$, completing the proof.

Let $h$ be a GFF on $\D$ with boundary conditions given by $\la'$ to the left of $-i$ and $\la' - 2\chi\pi$ to its right, and let $\eta'$ be a space-filling $\SLE_\kk'(\kk'-6)$ loop in $D$ starting at $-i$ coupled with $h$ as described in Section~\ref{sec:sfsle}. Suppose that the $\CLE_\kk$ $\GG$ is coupled to $h$ as in Section~\ref{subsec:cle834} if $\kk \in (8/3, 4)$, or as in Section~\ref{subsec:cle48} if $\kk \in (4,8)$.
Fix $\xi \in (1,3/2)$, let $\ee \in (0,1)$, and assume the event $E_{\ee, \xi}$ holds, as defined in Proposition~\ref{prop:area_filling}.
Let $z \in \D$ and define $d_z = \diam(s_z)$. By Lemma~\ref{lem:sfsle_pocket_interaction_834} (for $\kk \in (8/3,4)$) and Lemma~\ref{lem:sfsle_pocket_interaction_48} (for $\kk \in (4,8)$), once $\eta'$ enters $s_z$, it a.s.\ hits every point of $s_z$ before leaving $\ov{s_z}$. 
In particular, on $E_{\ee,\xi}$, if $d_z = \dd$ for $\dd \in (0, \ee]$, the pocket $s_z$ must contain a ball of radius $\dd^\xi$.

Therefore, on the event $E_{\ee, \xi}$, if $d_z > \ee$ then $\Area(s_z) \geq \ee^{2\xi}$. If $d_z \leq \ee$ so that $d_z = \dd$ for some $\dd \in (0, \ee]$, then $\Area(s_z) \geq \dd^{2\xi} = d_z^{2\xi}$. In either case, on $E_{\ee,\xi}$, we have (noting that $d_z \leq 2$ always)
\begin{equation}\label{eq:bound_on_E}
\frac{\diam(s_z)^2}{\Area(s_z)} \leq \frac{4}{\ee^{2\xi}
} \vee d_z^{-2(\xi - 1)} \leq \frac{4}{\ee^{2\xi}
} + d_z^{-2(\xi - 1)}.
\end{equation}

For the sake of notation, redefine $E_{2, \xi} = \varnothing$ and suppress $\xi$ in the notation so that $E_\ee \equiv E_{\ee,\xi}$. 
Then we have $E_{2}\subset E_{1} \subset E_{1/2} \subset \cdots$, and by Proposition~\ref{prop:area_filling} the union $\cup_m E_{2^{-m}}$ has probability one.
Using \eqref{eq:bound_on_E} on each event $E_{2^{-m}}$, we have
\begin{align}
\begin{split}\label{eq:diam_area_sq_sum}
\E\left[\frac{\diam(s_z)^2}{\Area(s_z)}\right] &= \sum_{m=0}^\infty \E\left[\frac{\diam(s_z)^2}{\Area(s_z)} \one_{E_{2^{-m}}\sm E_{2^{-(m-1)}}}\right]\\
&\leq \sum_{m=0}^\infty \E\left[\left(\frac{4}{2^{-2m\xi}
} + d_z^{-2(\xi - 1)}\right)\one_{E_{2^{-m}}\sm E_{2^{-(m-1)}}}\right]\\
&= \E\left[d_z^{-2(\xi-1)}\right] + \sum_{m=0}^\infty \left(4\cdot 2^{2m\xi}\right)\p\left[E_{2^{-m}}\sm E_{2^{-(m-1)}}\right].
\end{split}
\end{align}
Intuitively, we can think of the first term as being the contributions coming from cases when the loop $s_z$ is small (for large loops, say $d_z > 1/2$, the quantity $d_z^{-2(\xi - 1)}$ is bounded, so won't pose any issue). Correspondingly, the sum comes from contributions when the loop is large relative to the length scale $2^{-m}$. Note that the second term does not depend on $z$, whereas the first term does.

By Proposition~\ref{prop:area_filling}, for any $n \in \N$ there exists $A = A_{n,\xi} > 0$ such that
\begin{equation}
\p[E_{2^{-m}}\sm E_{2^{-(m-1)}}] \leq \p[E_{2^{-(m-1)}}^c] \leq A2^{-(m-1)n}.
\end{equation}
Combining this with \eqref{eq:diam_area_sq_sum}, we obtain
\begin{equation}\label{eq:big_loop_sum}
\E\left[\frac{\diam(s_z)^2}{\Area(s_z)}\right] \leq \E\left[d_z^{-2(\xi-1)}\right] + (4A)2^n\sum_{m=0}^\infty 2^{2m\xi}2^{-mn}.
\end{equation}
For $n$ large enough ($n = 4$, say), this sum is finite and independent of $z$, so will be integrable over $\D$.

It remains to deal with $\E[d_z^{-2(\xi -1)}]$. Let $f_z \colon \D \to \D$ denote the unique conformal map sending $0$ to $z$ with $f'(0) > 0$. By the conformal invariance of $\CLE$, $s_z$ and $f_z(s)$ have the same law. We will first consider $d := d_0$, the diameter of $s := s_0$, the connected component of $\D\sm\Uu$ containing $0$. In \cite[Theorem~1]{ssw2009radii}, a probability density for $B_1^0 := -\log \confrad(0, s)$ is computed and implies \cite[Equation~(3)]{ssw2009radii} which states that for all $\la \in (0, \la_0(\kk))$ there exists a finite constant $C(\la, \kk)$ such that $\E[\exp(\la B_1^0)] = C(\la, \kk)$. Here, $\la_0(\kk) = 1 - 2/\kk - 3\kk/32$, which is positive exactly for $\kk \in (\frac83, 8)$. By \cite[Corollary~3.19]{lawler2008conformally}, we have $\frac14\confrad(0, s) \leq \dist(0, \del s) \leq 4\confrad(0,s)$ from which we can deduce $\frac14\confrad(0,s) \leq \diam(s) =: d$. It follows that $\E[d^{-\la}] < \infty$ for $\la \in (0, \la_0(\kk))$. Therefore, by choosing $\xi_0 > 1$ small enough that $2(\xi - 1) < \la_0(\kk)$, we can ensure that $\E[d^{-2(\xi -1)}] < \infty$ for all $\xi \in (1, \xi_0)$.

For general $z \in \D$, let $r = 1 - |z|$. As mentioned above we have $s_z \stackrel{d}{=} f_z(s)$. By \cite[Corollary~3.19 and Theorem~3.23]{lawler2008conformally}, for all $w \in \D$, 
\begin{equation}
|f_z(w) - f_z(0)| = |f_z(w) - z| \geq \frac{|w|}{4}|f_z'(0)| \geq \frac{|w|r}{16}.
\end{equation}
Since there must exist $w \in s$ with $\n{w} \geq d/4$, it follows that $|f_z(w) - z| \geq |w|r/16$, so $d_z \geq rd/64$ (where we couple $s_z$ and $s$ with different versions of the $\CLE_{\kk}$ so that $s_z = f_z(s)$). Therefore, there is a constant $D$, depending only on $\xi$, such that
\begin{equation}
\E[d_z^{-2(\xi -1)}] \leq D\, r^{-2(\xi -1)}\E[d^{-2(\xi -1)}].
\end{equation}
Therefore we see that
\begin{equation}\label{eq:integral_of_small_loop_bound}
\int_\D \E[d_z^{-2(\xi -1)}] d\mu(z) \leq D\E[d^{-2(\xi -1)}] \int_\D (1 - \n{z})^{-2(\xi - 1)}d\mu(z).
\end{equation}
Since $\E[d^{-2(\xi - 1)}] < \infty$ for $\xi \in (1, \xi_0)$, and since the integral of $(1-|z|)^{-\aal}$ over $\D$ is finite for $\aal < 1$, which is true when $\xi < 3/2$, it follows that by choosing $\xi > 1$ small enough, the right-hand side of \eqref{eq:integral_of_small_loop_bound} is finite. Finally, choosing $n \in \N$ large enough and $\xi > 1$ small enough, combining \eqref{eq:big_loop_sum} and \eqref{eq:integral_of_small_loop_bound} shows that \eqref{eq:sum_to_integral} is finite, completing the proof.
\end{proof}

\bibliographystyle{alpha}
\bibliography{references}

\end{document}